\def\wrt{{\em w.r.t.}}
\def\iff{if and only if}
\newtheorem{theorem}{Theorem}
\newtheorem{definition}{Definition}
\newtheorem{definitions}[definition]{Definitions}
\newtheorem{proposition}{Proposition}
\newtheorem{corollary}{Corollary}
\newtheorem{claim}{Claim}
\newtheorem{fact}{Fact}
\newtheorem{problem}{Problem}
\newtheorem{lemma}{Lemma}
\theoremstyle{definition}
\newtheorem{examples}{Examples}
\newtheorem{example}[examples]{Example}
\def\iff{if and only if}
\def\wrt{{\em w.r.t.}}
\newcommand{\R}{\mathbf{R}}
\newcommand{\RR}{\mathbb{R}}
\newcommand{\LL}{\mathbf{L}} 
\newcommand{\N}{\mathbb{N}}
\newcommand{\Z}{\mathbb{Z}}
\newcommand{\Q}{\mathbb{Q}}
\newcommand{\M}{\mathbf{M}}
\DeclareMathOperator{\Spec}{Spec}
\DeclareMathOperator{\MSpec}{MSpec}
\DeclareMathOperator{\Nerv}{Nerv}
\DeclareMathOperator{\Iso}{Iso}
\DeclareMathOperator{\Ball}{Ball}
\DeclareMathOperator{\Path}{Path}
\DeclareMathOperator{\Sons}{Sons}
\DeclareMathOperator{\Fin}{Fin}
\DeclareMathOperator{\supp}{supp}
\DeclareMathOperator{\Well}{Well}
\DeclareMathOperator{\weight}{w}
\DeclareMathOperator{\density}{d}
\DeclareMathOperator{\piweight}{\pi w}
\DeclareMathOperator{\Max}{Max}
\DeclareMathOperator{\Min}{Min}
\DeclareMathOperator{\Sup}{Sup}
\DeclareMathOperator{\Inf}{Inf}
\DeclareMathOperator{\dom}{dom}
\DeclareMathOperator{\Past}{Past}
\title{On homogeneous  ultrametric spaces} 
\author[C.~Delhomm\'e]{Christian Delhomm\'e}
\address{ E.R.M.I.T.  D\'epartement de Math\'ematiques et
d'Informatique. Universit\'e de La R\'eunion, 15, avenue Ren\'e
Cassin, BP 71551,  97715 saint-Denis Messag. Cedex 9, La R\'eunion, France} \email {delhomme@univ-reunion.fr} 
\author[C.~Laflamme] {Claude Laflamme}
\thanks{The second author was supported by NSERC
of Canada Grant \# 690404}
\address{University of Calgary, Department of Mathematics and Statistics, Calgary, Alberta, Canada T2N 1N4} 
\email {laflamme@ucalgary.ca} 
\author [M.~Pouzet]{Maurice Pouzet}
\thanks{This research was completed  while the first and third author visited the Mathematical  Department  of the University  of Calgary in the fall 2007}
\address{Univ. Lyon, Universit\'e
Claude-Bernard Lyon1, CNRS UMR 5203, Institut Camille Jordan,   43 Bd. 11 Novembre 1918
F$69622$ Villeurbanne  cedex, France} \email{
pouzet@univ-lyon1.fr }
\author [N.~Sauer]{Norbert Sauer}
\thanks{The fourth author was supported by NSERC of
Canada Grant \# 691325} \address{University of Calgary, Department of
Mathematics and Statistics, Calgary, Alberta, Canada T2N 1N4} \email{
nsauer@ucalgary.ca}
\date{\today}
\begin{document}

\keywords{Partition theory, metric spaces, homogeneous relational structures, Urysohn space, ultrametric spaces.}
\subjclass[2000]{54E35, 54E40, 03C13}

\begin{abstract} 
A metric space $\M$ is \emph{homogeneous} if every isometry between finite subsets extends to a surjective isometry defined on the whole space. We show that if $\M$ is an ultrametric space, it suffices that isometries defined on singletons extend, \emph{i.e} that the group of isometries of $\M$ acts transitively. We derive this fact from a result expressing that the arity of the group of isometries of an ultrametric space is at most $2$. An illustration of this result with the notion of spectral homogeneity is given. With this, we show that the Cauchy completion of a homogeneous ultrametric space is homogeneous. We present several constructions of homogeneous ultrametric spaces, particularly the countable homogeneous ultrametric space, universal for rational distances, and its Cauchy completion. From a general embeddability result, we prove that every ultrametric space is embeddable into a homogeneous ultrametric space with the same set of distances values and we also derive three embeddability results due respectively to F.~Delon, A.~Lemin and  V.~Lemin,  and
V.~Fe\v{\i}nberg. Looking at ultrametric spaces as $2$-structures, we observe that the nerve of an ultrametric space is the tree of its robust modules, thus providing insight into possible further  structure results. 

\end{abstract}

\maketitle

\section*{Introduction and description of the results}
A relational structure $\R$ is \emph{homogeneous} if every isomorphism between finite induced substructures of $\R$ extends to an automorphism of the whole structure $\R$ itself. Introduced by Fra\"{\i}ss\'e \cite{fraisse} and J\'onsson \cite{jonsson}, homogeneous structures are now playing a fundamental role  in Model Theory. They occur in other parts of mathematics as well, e.g.  in the theory of groups and  in the theory of metric spaces. A prominent homogeneous metric space is the Urysohn space, a separable complete metric space, in  which every finite metric space is isometrically embeddable, and which is homogeneous in the sense that  every isometry between finite subsets extends to an isometry of the whole space onto itself.  Due to the  contributions of Pestov  \cite{pestov} and of Kechris, Pestov and Todorcevic \cite{KPT},  this space has  recently attracted some attention  in the theory of  infinite dimensional topological dynamics in connection with the study of extremely amenable groups. Subsequently,  some additional research on homogeneous metric spaces has developed. 

The study of indivisible metric spaces \cite{DLPS}, \cite {DLPS2}, \cite{DLPS3} led us to consider 
homogeneous ultrametric spaces.  We noticed in \cite{DLPS} that for  countable ultrametric spaces, the fact that the isometry group acts transitively ensures that the space is homogeneous. In this paper, we show that the countability condition is unnecessary (Corollary \ref{cor:imp}) \footnote{This result was announced in \cite{DLPS2}, see page 1465 line 6 with reference to a preliminary version of this paper. We derive this  fact from a result expressing that the arity of the group of isometries of an ultrametric space is at most $2$. The same conclusion for Polish ultrametric spaces was presented  by Maciej Malicki in his lecture to Toposym, Prague 2011, and published in \cite{malicki} (see 3) of Corollary 4.2 p. 1672); his proof is quite different.}. We  introduce  the notion of spectral homogeneity and give a characterization of spaces with that property (Theorem \ref{thm:char}).   From this characterization it  follows  that the Cauchy completion of a  homogeneous ultrametric space is homogeneous (Theorem \ref {thm:cauchy}). We describe several homogeneous ultrametric spaces, including the  countable homogeneous ultrametric spaces and their Cauchy completion. 

The spectrum $\Spec(\M)$ of an ultrametric space $\M$ is the set of values taken by the distance.  The \emph{degree} of a  closed ball $B$ of $\M$ is  the number  $\mathbf s_{\M}(B)$ of sons of $B$, that is the number of open balls within $B$ of the same radius as $B$ (see Definition \ref{def:sons}). The  \emph{degree sequence} of $\M$ is the cardinal function  $\mathbf {s}_{\M}$ which associates to each $r\in \Spec(\M)_*:=\Spec(\M)\setminus \{0\}$ the supremum $\mathbf {s}_{\M}(r)$ of $\mathbf s_{\M}(B)$ where $B$ has diameter $r$.  Given a subset $V$ of the non-negative reals which contains $0$, we look at the collection $\mathcal M_{\bf s} (V)$ of ultrametric spaces   whose spectrum is $V$ and cardinal function $\mathbf {s}$.  The class $\mathcal M_{\bf s} (V)$ contains   a strictly increasing sequence of length $\omega_1$ of homogeneous ultrametric spaces  provided that  $V$ contains the non-negative rational (Theorem \ref{aleph1chain}). Furthermore, every ultrametric  space $\M\in \mathcal M_{\bf s} (V)$ can be isometrically embedded into the space  $\M_{\mathbf s}(Well(V))$ made of functions $f\in \Pi_{r\in  V_{*}}  \mathbf s(r)$ whose support $\supp (f)$ is dually well ordered, the distance $d$ being defined by $d(f,g):= \Max \{r\in V: f(r)\not=g(r)\}$, and the function $\mathbf s$ equals to $\mathbf s_{\M}$ (Theorem \ref{mainresult}). Spaces of that form were characterized,  by   Fe\v{\i}nberg  \cite{feinberg1},  as homogeneous and $T$-complete ultrametric spaces (spaces in which every chain of non-empty balls has a non-empty intersection). With our embedding result, we give an other proof of  Fe\v{\i}nberg's result (with a slight improvement). We obtain the existence of  a space of density at most $\kappa^{\aleph_0}$ (namely $\M_{\mathbf \kappa}(Well(\RR_{*}^+)$)  in which  every ultrametric space $\M$ with  density at most $\kappa$  is isometrically embeddable (a result due to A. and V.  Lemin \cite{lemin2}). We show that the  subspace $\M_{\mathbf {s}}(\Fin(V))$ of $\M_{\mathbf s}(Well(V))$ made of functions with finite support can be isometrically embedded in every ultrametric spaces $\M$ such that  $\mathbf s(r) \leq \mathbf s_{\M}(B)$ for every closed ball $B$ with radius $r$ (a result essentially due to F.~Delon \cite{delon}). If  $V$ is countable and  $\mathbf {s}(r)$ is countable for each $r$  then it follows from Theorem 2.7 of \cite{DLPS} (as well as \cite {lionel}) that up to isometry $\M_{\mathbf {s}}(\Fin(V))$ is the unique countable homogeneous ultrametric space with degree sequence $\mathbf {s}$. Hence, countable homogeneous spaces are characterized by their spectrum and a cardinal function.  Since the Cauchy completion of $\M_{\mathbf {s}}(\Fin(V))$ is  also homogeneous, then provided that $\mathbf {s}$ is countable,  this Cauchy completion  is the unique  separable and Cauchy complete ultrametric space with degree sequence $\mathbf {s}$. But,  it is unlikely that there is a simple characterization of homogeneous ultrametric spaces of arbitrary cardinalities. Yet we conclude this paper by  establishing a link with the theory of $2$-structures; we observe that the nerve of an ultrametric space is the tree of its robust modules (Proposition \ref{prop:nerve}), thus providing a possible direction for further structural characterizations. 

We would also like to thank the anonymous referee whose numerous and
helpful comments improved the paper considerably.

\section{Basic notions on  ultrametric  spaces}

We recall the following notions. 
Let $\M:=(M,d)$ be a metric space, where $d$ is the distance function on $M$. Let $A$ be  a subset of $M$;  we denote by $d_{\restriction A}$ the restriction of $d$ to $A\times A$  and by $\M_{\restriction A}$ the metric space $(A, d_{\restriction A})$, which we call {\it the metric subspace of $\M$ induced on $A$}; the {\it diameter}
of  $A$ is $\delta(A): =\Sup\{d(x,y): x,y \in A\}$. If $x\in M$, the \emph  {distance} from $x$ to $A$ is $d(x, A):= \Inf\{d(x, y): y\in A\}$. 
Let  $a\in M$; for $r\in \RR^+$, the {\it open}, resp. {\it closed},  {\it ball of center $a$, radius $r$} is the set
$B(a, r):= \{x\in M: d(a,x)<r\}$, resp. $\hat{B}(a, r):=\{x\in M:  d(a,x)\leq r\}$. In the sequel, the term {\it ball} means an open or a closed ball. When needed, we denote by $\mathcal \Ball(\M)$ the collection of balls of $\M$. A ball is \emph{ non-trivial} if it has more than one element. Two balls, possibly in different metric spaces, have the  same \emph{kind} if they have the same diameter which is attained in both or in none. 
Let $\M:=(M, d)$ and $\M':=(M',d')$ be two metric spaces. A map $\varphi:M\rightarrow M'$ is an {\it isometry from $\M$ into $\M'$}, or an {\it embedding},   if 
\begin{equation}\label{eqisom}
d'(\varphi(x),\varphi(y))=d(x,y) \; \text{for all}\; x,y\in M
\end{equation} 
This is an isometry from $\M$ {\it onto} $\M'$ if it is surjective. In particular we denote by  
$\Iso(\M)$ the  group of surjective isometries of $\M$ onto $\M$. For brevity, and if this causes no confusion, we will say that a map from a subset $A$ of $M$ to a subset $A'$ of $M'$ is an isometry from $A$ to $A'$ if this  is an isometry from $\M_{\restriction A}$ onto  $\M'_{\restriction A'}$. We say that $\M$ is \emph{isometrically embeddable}   into $\M'$ if there is an isometry from $\M$ into $\M'$. If in addition there is no isometry from $\M'$ into $\M$ then $\M$ is  \emph{strictly isometrically embeddable} into $\M'$.

Four other notions will be of importance:

\begin{definitions} Let  $a\in M$, the \emph {spectrum}  of $a$ is the set 
\[ \Spec(\M,a):=\{d(a, x):  x\in M\}. \] 
The  \emph {multispectrum}  of     $\M$ is the set 
\[ \MSpec(\M):= \{ \Spec(\mathbf{M}, a): a\in M\}.\]
The \emph{spectrum} of $\M$ is the set 
\[ \Spec(\M):= \bigcup \MSpec(\M) \; (=\{ d(x,y): x,y\in M\}). \]
The \emph{nerve} of $\M$ is the set 
\[ \Nerv (\M):= \{\hat{B}(a,  r): a\in M, r\in \Spec(\M,a)\}. \] 
\end{definitions}

A metric space is {\em ultrametric} if it satisfies the strong triangle inequality $d(x,z)\leq \Max\{d(x,y),d(y,z)\}$. Note that a space is  ultrametric  if and only if $d(x,y)\geq d(y,z)\geq d(x,z)$ implies $d(x,y)=d(y,z)$. Alternatively, all triangles are isosceles, the two equal sides being the largest. In an ultrametric space, balls (open or closed, as defined above) are both open and closed with respect to the metric topology.   The essential property of ultrametric spaces, which follows trivially from the definition,  is that balls are either disjoint or comparable with respect to the subset relation. 
Observe also that the diameter $\delta(A)$ of a subset $A$ of an ultrametric space is equal, for any $a\in A$,
to $\sup\{d(a,x):x\in A\}$.
 It turns out that an ultrametric space can be recovered from the pair made of $(\Nerv(\M); \supseteq)$ and  the diameter function $\delta$. Ordered by the  reverse of the subset relation, the nerve of an ultrametric space is   a tree, more specifically a "ramified meet  tree" in which every element is  below a maximal element; the diameter function is a strictly decreasing  map from the tree in the non-negative reals, which is zero on the maximal elements of the tree. For the exact statement of this characterization, which we will not need here, see \cite{lemin} and also \cite{DLPS}. 

\begin{definition}\label{def:sons}
Let $\mathbf{M}:=(M,d)$ be an ultrametric space, $B\in \Nerv(\M)$ and $r:= \delta(B)$.  If $r>0$,  a \emph {son} of $B$ is  any open ball of radius $r$ which is a subset of $B$. We denote by $\Sons(B)$ the set of sons of $B$ and by $\mathbf{s_{\M}}(B)$ the cardinality of the set $\Sons(B)$. For every  $r\in \Spec(\M)\setminus \{0\}$ we denote by $\mathbf{s_{\M}}(r)$  the supremum of the cardinality  $\mathbf{s_{\M}}(B)$ where $B$ is any member of $\Nerv(\M)$ with diameter $r$. In the sequel, we identify $\mathbf {s}_{\M}(B)$ and $\mathbf {s}_{\M}(r)$ with ordinals  (more precisely with initial ordinals). 
 
\end{definition}
Note that the sons of $B$ form a partition of $B$. Also, note that they do not need to belong to $\Nerv(\M)$. These are the immediate successors in the tree of strong modules of $\M$ (see Section \ref{section:modules}) hence the terminology we use. 

\begin{examples}\label{examples1}
Let $\RR^{+}$ be the set of non negative reals. The map $d:  \RR^{+}\times \RR^{+} \rightarrow \RR^{+}$, defined by $d_{\RR^+}(x,y)= 0$ if $x=y$ and $d_{\RR^+}(x,y)= \Max \{x,y\}$ otherwise, is an ultrametric distance. It follows that every subset $V$ of $\RR^+$  containing  $0$ is the spectrum of some ultrametric space.

$\bullet$ Let $K:=2^\omega$ be the set of all $0-1$ valued $\omega$-sequences. Let $d: K\times K\to \mathbb{R^+}$ given by $d(x,y):= 0$ if $x=y$ and $d(x,y):=\frac{1}{\mu(x,y)+1}$ otherwise, where
$\mu(x,y):=\min\{n\in \omega: x(n)\not=y(n)\}$. It is easily checked that $\mathbf{K}:=(K,d)$ is an ultrametric space in which every non-trivial element of the nerve has two sons, which are again elements of the  nerve. Let $\mathbf {C}$ be the subspace of $\mathbf K$ containing all $\omega$-sequences with finite support, that is 1's appearing only finitely often. Members of the nerves from both spaces  having non zero radius identify to the binary tree of height $\omega$. Clearly, $\mathbf K$ is  the Cauchy completion  of $\mathbf {C}$.  

$\bullet$ Let $\kappa$ be a cardinal number at least equal to $2$, identified to the set of ordinal numbers of cardinality strictly less than $\kappa$ and let $L_\kappa$ be the set of all functions $f$ from $\Q^{+}_{*}$,  the positive rationals, to  $\kappa$ which are eventually zero, that is $f(x)=0$ for all $x\in \Q^{+}_{*}$ larger than some real $x(f)$. 
Let $d: \left[ L_{\kappa}\right]^2\to \mathbb{R_{+}}$ given by $d(f,g):= 0$ if $f=g$ and $d(f,g):=\Sup\{r\in \Q^{+}_{*}: f(r)\not=g(r)\}$ otherwise. 

It is easily checked that $\LL_{\kappa}:=(L_{\kappa},d)$ is an ultrametric space in which every non-trivial element of the nerve has continuum many sons and no son of an element of the nerve is an element of the nerve. Furthermore, as  shown by A. and V.~ Lemin \cite{lemin2}, every ultrametric space $\M$ of weight at most $\kappa$ can be  embedded isometrically in $\LL_{\kappa}$. 

It is easy to see that for each of these last two examples, the group of isometries acts transitively, hence from Corollary  \ref{cor:imp} below, these ultrametric spaces are homogeneous.  In Section \ref{section:transitivity} we will define more general constructions and also give an alternative proof of  Lemin's result.
\end{examples}

The following facts for an ultrametric space $\M=:(M,d)$ and ball $B$ of $\M$  can easily be verified,  in the order they are stated. 

\begin{fact}\label{fact:stated}
\begin{enumerate}
\item If  $x,y\in B$ and $z\in M\setminus B$ then $d(x,z)=d(y,z)$.
\item If $\varphi$ is a function of $M$ to $M$ which induces  an isometry of $B$ into  $\varphi(B)$ and also an isometry of $M\setminus B$ onto $M\setminus \varphi(B)$, and if further this latter isometry can be  isometrically extended  to some element $x$, then $\varphi$ is  isometric.
\item If $B\in \Nerv(\M)$ and $x,y\in B$ so that the son of $B$ containing $x$ is different from the son of $B$ containing $y$ then $d(x,y)=\delta(B)$. If $x\in M$ and $r\in \Spec(\M,x)\setminus \{0\}$ then $\delta(\hat{B}(x,r))=r$ and $\Sons(\hat{B}(x,r))$ contains at least two different elements and the distance between elements in  different sons of $\hat{B}(x,r)$ is $r$.
\item Let $\psi$ be a bijection of $M$, $B,B^\prime\in \Nerv(\M)$ with $\delta(B)= \delta (B')$ and  $f$ be a bijection of $\Sons(B)$ to $\Sons(B^\prime)$. If  for every $A\in \Sons(B)$ the function  $\psi_{\restriction A}$ is an isometry of $A$ to $f(A)$ and $\psi_{\restriction M\setminus B}$ extends isometrically to some element $x$ of $B$, then $\psi$ is an isometry.
\item Let $B$ and $B^\prime$ be two balls in $\Nerv(\M)$ with $\delta(B)=\delta(B^\prime)$ and $x\in B$, $x^\prime\in B^\prime$.  If there exists an element $\varphi\in \Iso(\M)$ with $\varphi(x)=x^\prime$ then $\varphi[B]=B^\prime$ and $\varphi$ induces a bijection of  $\Sons(B)$ to $\Sons(B^\prime)$, and hence $\mathbf{s_{\M}}(B)=\mathbf{s_{\M}}(B^\prime)$. 
\end{enumerate}
\end{fact}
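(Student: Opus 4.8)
The plan is to derive all five items from the basic fact recalled above, namely that in an ultrametric space $d(a,c)>d(a,b)$ forces $d(b,c)=d(a,c)$, together with the elementary remark that every point of a ball is one of its centres: thus if $B$ is a ball of radius $\rho$ (open or closed) and $z\notin B$, then $d(x,z)>d(x,u)$ for all $x,u\in B$, since the distances inside $B$ are $\le\rho$ (resp.\ $<\rho$) while $d(x,z)\ge\rho$ (resp.\ $>\rho$). Item (1) is then immediate: apply the basic fact to the triple $x,y,z$, using $d(x,z)>d(x,y)$.

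For (2) I would check that $\varphi$ preserves the distance of an arbitrary pair $u,v$ by distinguishing the cases $u,v\in B$; $u,v\in M\setminus B$; and $u\in B$, $v\notin B$. The first two cases are exactly the hypotheses (the fact that $\varphi_{\restriction M\setminus B}$ extends isometrically to a point $x$ witnesses in particular that $\varphi_{\restriction M\setminus B}$ is itself an isometry). In the mixed case, fix that distinguished $x\in B$: by (1) one has $d(u,v)=d(x,v)$ and, by the remark above, $d(u,x)<d(x,v)$; moreover $d(\varphi(u),\varphi(x))=d(u,x)$ since $\varphi_{\restriction B}$ is isometric and $d(\varphi(x),\varphi(v))=d(x,v)$ since $\varphi_{\restriction(M\setminus B)\cup\{x\}}$ is isometric, so applying the basic fact in the target space to $\varphi(u),\varphi(x),\varphi(v)$ yields $d(\varphi(u),\varphi(v))=d(\varphi(x),\varphi(v))=d(x,v)=d(u,v)$. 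For (3), I would simply unfold the definitions: the sons of $B\in\Nerv(\M)$ are pairwise disjoint open balls of radius $r:=\delta(B)$, so if $x,y$ lie in distinct sons then $y\notin B(x,r)$, whence $r\le d(x,y)\le\delta(B)=r$; for the second half, $r\in\Spec(\M,x)\setminus\{0\}$ puts $\hat{B}(x,r)$ in $\Nerv(\M)$ with $\delta(\hat{B}(x,r))=r$, and any $z$ with $d(x,z)=r$ lies in a son different from $B(x,r)$, so there are at least two sons and, by the first half, the distance between points of distinct sons is $r$.

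Item (4) is the ``many sons'' refinement of (2), to be handled the same way once one notes that $\psi[B]=\bigcup_{A\in\Sons(B)}\psi[A]=\bigcup_{A\in\Sons(B)}f(A)=B'$ (so $\psi[M\setminus B]=M\setminus B'$): for a pair lying in a single son use that son's isometry; for a pair lying in two distinct sons $A_1\ne A_2$ of $B$ use (3), so the distance is $\delta(B)$, while the images lie in the distinct sons $f(A_1),f(A_2)$ of $B'$ and so are at distance $\delta(B')=\delta(B)$; for a pair in $M\setminus B$ use the extension hypothesis; and for a mixed pair $u\in B$, $v\notin B$ repeat the argument of (2), the only addition being that $d(\psi(u),\psi(x))=d(u,x)$ still holds for the distinguished $x\in B$ --- by the relevant son's isometry when $u,x$ share a son, and because both sides equal $\delta(B)=\delta(B')$ otherwise, again by (3). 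For (5): since $B\in\Nerv(\M)$ has diameter $r$ and $x\in B$, we have $B=\hat{B}(x,r)$, and likewise $B'=\hat{B}(x',r)$; as $\varphi$ is a surjective isometry, $\varphi[B]=\varphi[\hat{B}(x,r)]=\hat{B}(\varphi(x),r)=\hat{B}(x',r)=B'$. An arbitrary son $A=B(a,r)$ of $B$ is then carried to the open ball $B(\varphi(a),r)$, which lies in $\varphi[B]=B'$ and is therefore a son of $B'$; applying the same to $\varphi^{-1}\in\Iso(\M)$ (which sends $x'$ to $x$) shows that $A\mapsto\varphi[A]$ is a bijection of $\Sons(B)$ onto $\Sons(B')$, whence $\mathbf{s_{\M}}(B)=\mathbf{s_{\M}}(B')$.

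I do not expect a real obstacle. The one place needing care is the mixed case of (2) and (4): the instinct is to first prove that $\varphi[B]$ (resp.\ $\psi[B]$) is a ball, but it is cleaner to route everything through a single application of the basic ultrametric fact in the target space. One should also keep in mind that in (4) the restriction $\psi_{\restriction B}$ is \emph{not} among the hypotheses, so distances inside $B$ between points of different sons must be recovered from (3) rather than assumed.
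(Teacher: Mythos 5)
Your overall route is the intended one: the paper gives no proof of Fact \ref{fact:stated} (it only says the items ``can easily be verified, in the order they are stated''), and your sequential derivation from the isosceles property is exactly that kind of verification; items (1), (3) and (5) are correct as you argue them, and so is the reduction of (4) to (3) for pairs inside $B$.

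There is, however, one genuine gap, in how you read the extension hypothesis in items (2) and (4). The hypothesis is that $\varphi_{\restriction M\setminus B}$ (resp.\ $\psi_{\restriction M\setminus B}$) \emph{admits some} isometric extension to the point $x$; it does not say that this extension is $\varphi$ (resp.\ $\psi$) itself, i.e.\ that its value at $x$ is $\varphi(x)$. Your mixed-case step ``$d(\varphi(x),\varphi(v))=d(x,v)$ since $\varphi_{\restriction (M\setminus B)\cup\{x\}}$ is isometric'' therefore uses a stronger hypothesis than the one stated, and the weaker reading is the one the paper actually needs: in the proof of Theorem \ref{thm:first} the witnessing extension of $\overline\varphi_{\restriction M\setminus B_0}$ at $a''$ is the restriction of the global isometry $\varphi_{F\setminus\{a\}}$, whose value $\varphi_{F\setminus\{a\}}(a'')=\varphi_{\{a\}}(a)$ is in general different from $\overline\varphi(a'')$. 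The repair is short and uses only tools you already have. Let $g$ be the given isometric extension and $x^*:=g(x)$. Since $g$ is injective and $\varphi_{\restriction M\setminus B}$ is onto $M\setminus\varphi(B)$ (resp.\ $\psi[M\setminus B]=M\setminus B'$, which you derived), $x^*$ cannot lie in the image of $M\setminus B$, so $x^*\in\varphi(B)$ (resp.\ $x^*\in B'$). In item (2) write $x^*=\varphi(w)$ with $w\in B$; for $u\in B$ and $v\notin B$ one has $d(\varphi(u),\varphi(w))=d(u,w)<d(x,v)=d(x^*,\varphi(v))$, so the isosceles property applied in the target gives $d(\varphi(u),\varphi(v))=d(x^*,\varphi(v))=d(x,v)=d(u,v)$; in item (4) it is even simpler, since $d(\psi(u),x^*)\le\delta(B')=\delta(B)<d(x,v)=d(x^*,\psi(v))$. (A cosmetic point: in your preliminary remark the two ``resp.''\ lists are paired the wrong way round --- closed balls give internal distances $\le\rho$ and external ones $>\rho$, open balls give $<\rho$ and $\ge\rho$ --- but the strict inequality you use is correct in both cases.)
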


\section{Isometries and extensions}


Here is our first result:
\begin{theorem} \label{thm:first}Let $\M:=(M,d)$ be an ultrametric space, $n$ be a non negative integer and $F$, $F'$ be two $n$-element subsets of $M$. A map $\varphi: F\rightarrow F'$ extends to a surjective  isometry $\overline \varphi$ of 
$\M$ if  and only if:
\begin{enumerate}
\item $\varphi$ is an isometry from $\M_{\restriction F}$ onto $\M_{\restriction F'}$.
\item For each $x\in F$ the function $\varphi_{\restriction \{x\}}$ extends to a surjective  isometry of $\M$. 
\end{enumerate}
\end{theorem}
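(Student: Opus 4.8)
The plan is to prove the nontrivial direction by induction on $n$, the size of the sets $F$ and $F'$. The cases $n=0$ and $n=1$ are immediate: for $n=0$ take any surjective isometry (the identity), and for $n=1$ condition (2) is exactly what we need. So assume $n\geq 2$ and that the statement holds for all smaller sets. Fix $\varphi\colon F\to F'$ satisfying (1) and (2). The key idea is to isolate a single point of $F$ that sits "far apart" from the rest, peel it off using a ball decomposition, and apply the induction hypothesis inside and outside that ball.

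Concretely, I would pick $x\in F$ so that the diameter $\delta(F)$ is realized by some pair involving $x$ — more carefully, choose $x$ and set $r:=\delta(F)=\Max\{d(u,v):u,v\in F\}$; since all triangles are isosceles with the two large sides equal, there is a point $x\in F$ and a nonempty $G:=F\setminus(\hat B(x,r')\cap F)$ for an appropriate $r'<r$ such that $F$ splits as $F=(F\cap B)\sqcup G$ where $B$ is an open ball of radius $r$ containing $x$ but missing at least one other point of $F$ (this uses Fact \ref{fact:stated}(3): points in different sons of $\hat B(x,r)$ are at distance exactly $r$). By (1), $\varphi$ transports this decomposition to the analogous decomposition $F'=(F'\cap B')\sqcup G'$ with $B'$ an open ball of radius $r$, $\delta(B')=\delta(B)$. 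Now $F\cap B$ and $G$ each have fewer than $n$ elements, and the restrictions of $\varphi$ to each of them still satisfy (1) and (2) (condition (2) is inherited verbatim since it refers to isometries of all of $\M$). By induction, $\varphi_{\restriction F\cap B}$ extends to a surjective isometry, and likewise $\varphi_{\restriction G}$ extends to a surjective isometry of $\M$.

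The remaining work is to glue these two global isometries into one that agrees with $\varphi$ on all of $F$. Here I would use Fact \ref{fact:stated}(2) and (4): having a surjective isometry $\psi_1$ carrying $F\cap B$ correctly, restricted appropriately it gives an isometry of $B$ onto some ball, and having $\psi_0$ carrying $G$ correctly gives an isometry of $M\setminus B$; by Fact \ref{fact:stated}(1) the distances between a point of $B$ and a point outside $B$ depend only on the outside point, so the two pieces are automatically compatible once the outside isometry extends to one point of the inside — which is guaranteed because $x\in F\cap B$ and condition (2) gives a global isometry moving $x$ to $\varphi(x)\in B'$, forcing (Fact \ref{fact:stated}(5)) $B$ to $B'$. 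Assembling, $\psi:=\psi_1$ on $B$ and $\psi:=\psi_0$ on $M\setminus B$ is, by Fact \ref{fact:stated}(2), a surjective isometry extending $\varphi$.

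The main obstacle I anticipate is the bookkeeping in the gluing step: one must make sure the ball $B$ chosen inside $F$ is genuinely proper (i.e., $F\not\subseteq B$ and $F\cap B\neq\emptyset$), which requires $n\geq 2$ and a careful choice of radius so that $F$ really is cut into two nonempty pieces by a ball of the diameter of $F$; and one must verify that the outside isometry obtained from the induction hypothesis on $G$ can be made to extend to the point $x$ inside $B$ — this is where condition (2) applied to $x$, combined with Fact \ref{fact:stated}(5), does the essential work, and the compatibility of the two recursive isometries on the boundary is exactly what Fact \ref{fact:stated}(1)--(2) is designed to handle. Everything else is a routine induction.
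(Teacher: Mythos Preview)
Your inductive decomposition is fine, and in fact either your choice $r=\delta(F)$ or the paper's choice $r=d(a,F\setminus\{a\})$ will do. The gap is in the gluing.

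You set $\psi:=\psi_1$ on $B$ and $\psi:=\psi_0$ on $M\setminus B$, and invoke Fact~\ref{fact:stated}(2). But Fact~\ref{fact:stated}(2) requires that the outside piece be an isometry of $M\setminus B$ \emph{onto} $M\setminus\psi[B]=M\setminus B'$; equivalently, you need $\psi_0[B]=B'$. There is no reason for this: $\psi_0$ is a global isometry that agrees with $\varphi$ only on $G\subseteq M\setminus B$, so $\psi_0[B]$ can be any open ball of radius $r$ inside $\hat B(x,r)$. If $\psi_0[B]\neq B'$, then your $\psi$ is neither injective (the son $\psi_0^{-1}[B']$ and $B$ both land in $B'$) nor surjective (nothing lands in $\psi_0[B]$). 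Your appeal to condition~(2) at $x$ and Fact~\ref{fact:stated}(5) only tells you that \emph{some} isometry sends $B$ to $B'$; it does not force $\psi_0$ to do so.

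The paper repairs exactly this: setting $B_0'':=\psi_0^{-1}[B']$, one checks (using that $\varphi$ is an isometry on $F$) that $B_0''$ is another son of $\hat B(x,r)$, disjoint from $F$. One then glues in \emph{three} pieces: $\psi_1$ on $B$, $\psi_0$ on $M\setminus(B\cup B_0'')$, and the composite $\psi_0\circ\psi_1^{-1}\circ\psi_0$ on $B_0''$. This swap sends $B\mapsto B'$ and $B_0''\mapsto\psi_0[B]$, restoring bijectivity, and Fact~\ref{fact:stated}(4) then certifies the whole map is an isometry. Your plan is salvageable once you add this swap; without it the argument is incomplete.
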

\begin{proof}
Trivially, Properties (1) and (2) follow from the existence of $\overline \varphi$. For the converse,  we argue by induction on $n$. For $n=0$, the identity map extends the empty one. For $n=1$, the hypothesis and the conclusion are the same. Suppose $n\geq 2$. Pick  $a\in F$ and set $r:= d(a, F\setminus \{a\})$ and   $B_0:=B(a, r)$ and  $B:=\hat{B}(a,r)$, and pick $b\in F\setminus\{a\}$ with $d(a,b)=r=\delta(B)$.

By induction assumption, there are two  automorphisms of $\M$ denoted by  $\varphi_{\{a\}}$
and $\varphi_{F\setminus\{a\}}$ extending $\varphi_{\restriction\{a\}}$ and $\varphi_{\restriction F\setminus\{a\}}$ respectively. Let 
\begin{itemize}
\item $a'':= \varphi^{-1}_{F\setminus\{a\}}\circ \varphi_{\{a\}}(a)$,
\item $B_0':=\varphi_{\{a\}}[B_0]$, 
 \item $B_0'':=\varphi_{F\setminus\{a\}}^{-1}[B_0']$.
\end{itemize}
Because $\varphi_{\{a\}}(a)=\varphi(a)$ and $\varphi_{F\setminus\{a\}}(b)=\varphi(b)$ we get
\[
d(a'',b)= d(\varphi^{-1}_{F\setminus\{a\}}\circ\varphi(a),\varphi^{-1}_{F\setminus\{a\}}\circ\varphi(b))=d(\varphi(a),\varphi(b))=d(a,b)=r
\]
and hence from $d(a,b)=r$ and $d(a'',b)=r$, we have that  $d(a,a'')\leq r$ which in turn implies that $B_0$ and $B''_0$ are two sons of $B$ which are equal or disjoint. We  define  $\overline \varphi:M\rightarrow M$ via the following conditions: 
\begin{itemize}
 \item   $\varphi_{F\setminus\{a\}}$ on $M\setminus (B_0\cup B''_0)$,
 \item   $\varphi_{\{a\}}$ on $B_0$,
 \item  $\varphi _{F\setminus\{a\}}\circ \varphi_{\{a\}}^{-1}\circ \varphi_{F\setminus\{a\}}$ on $B_0''$ if $B_0''\not =B_0$.
\end{itemize}
If $B_0=B_0^{\prime\prime}$ then $\overline{\varphi}$ induces an isometry of  $M\setminus B_0$ and an isometry of $B_0$, hence is an isometry of $M$ onto $M$ according to Fact \ref{fact:stated} Item {\em (2)} with $a^{\prime\prime}$ used for  the element $x$ in Item $(2)$.

If $B_0\not= B_0^{\prime\prime}$ let $B':= \varphi_{F\setminus\{a\}}[B]$ and $f$ be the function of $\Sons(B)$ to $\Sons(B')$ with $f(A)=\varphi_{F\setminus\{a\}}[A])$ for every son $A\subseteq B\setminus(B_0\cup B_0^{\prime\prime})$ of $B$ and  $f(B_0)=B_0^\prime=\varphi_{\{a\}}[B_0]=\varphi_{F\setminus\{a\}}[B_0^{\prime\prime}]$ and $f(B_0^{\prime\prime})=\varphi_{F\setminus\{a\}}[B_0]$. Hence it follows from Fact \ref{fact:stated} Item {\em (4)}, again with $a^{\prime\prime}$ used for the element $x$, that $\overline{\varphi}$ is an isometry of $M$ onto $M$.

\end{proof}

\begin{corollary}\label{cor:1} Let $\M:=(M,d)$ be an ultrametric space and  $n$ be a non negative integer and $F$, $F'$ be two $n$-element subsets of $M$. A map $\varphi : F\rightarrow F'$ extends to a surjective  isometry $\overline \varphi$ of 
$\M$ if  and only if for each $x, y\in F$ the function  $\varphi_{\restriction \{x, y\}}$ extends to a surjective  isometry of $\M$. 
\end{corollary}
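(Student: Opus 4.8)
The plan is to derive this directly from Theorem~\ref{thm:first}. The forward implication is immediate: if $\overline\varphi\in\Iso(\M)$ extends $\varphi$, then for every $x,y\in F$ the same map $\overline\varphi$ extends $\varphi_{\restriction\{x,y\}}$.

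For the converse I would verify the two hypotheses of Theorem~\ref{thm:first} under the assumption that $\varphi_{\restriction\{x,y\}}$ extends to a surjective isometry of $\M$ for all $x,y\in F$, reading the case $x=y$ as the corresponding statement about singletons. Hypothesis (2) of Theorem~\ref{thm:first} is then essentially free: given $x\in F$, take $y:=x$ to get a surjective isometry of $\M$ extending $\varphi_{\restriction\{x\}}$ (and for $n=0$ there is nothing to check).

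It remains to check hypothesis (1), that $\varphi$ is an isometry of $\M_{\restriction F}$ onto $\M_{\restriction F'}$. First, $\varphi$ preserves distances: for $x,y\in F$ pick $\psi\in\Iso(\M)$ extending $\varphi_{\restriction\{x,y\}}$, so $d(\varphi(x),\varphi(y))=d(\psi(x),\psi(y))=d(x,y)$. Since distinct points of a metric space are at positive distance, distance preservation forces $\varphi$ to be injective; as $\varphi$ maps the $n$-element set $F$ into the $n$-element set $F'$, it is therefore a bijection of $F$ onto $F'$, hence an isometry of $\M_{\restriction F}$ onto $\M_{\restriction F'}$. With (1) and (2) in hand, Theorem~\ref{thm:first} yields a surjective isometry $\overline\varphi$ of $\M$ extending $\varphi$.

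I do not expect any real obstacle here; the corollary is a repackaging of Theorem~\ref{thm:first}. The only points worth flagging are that the hypothesis on pairs $\{x,y\}$ must be understood to cover singletons (the case $x=y$), so that condition (2) of Theorem~\ref{thm:first} is available, and that the surjectivity of $\varphi$ onto $F'$ uses the finiteness and equal cardinality of $F$ and $F'$.
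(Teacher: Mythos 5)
Your proposal is correct and follows essentially the same route as the paper: the paper also deduces the corollary from Theorem~\ref{thm:first} by observing that the pairwise hypothesis makes $\varphi$ an isometry of $\M_{\restriction F}$ onto $\M_{\restriction F'}$ (and yields the singleton condition), you simply spell out the injectivity/surjectivity and the $x=y$ reading that the paper leaves implicit.
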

\begin{proof} The ``only if\thinspace" part is obvious; for the ``if\thinspace" part, observe that   $\varphi $ is an isometry of $\M_{\restriction F}$ onto  $\M_{\restriction F'}$.
\end{proof}

Corollary \ref{cor:1} describes  a property  of the action on $M$ of $\Iso(\M)$; in the terminology of \cite{cherlin}, it expresses that  $\Iso(\M)$  has arity at most $2$.

According to the terminology of Fra\"{\i}ss\'e \cite {Fra}, a metric space $\M$ is \emph{homogeneous} if every isometry  $f$  whose domain and range are finite subsets of $M$ extends  to surjective  isometry of $\M$ onto $\M$.

The following is a straightforward corollary to Theorem \ref{thm:first}.

\begin{corollary}\label{cor:imp} An ultrametric space $\M$ is homogeneous if and only if $\Iso(\M)$  acts transitively on $\M$.
\end{corollary}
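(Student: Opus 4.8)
The plan is to deduce Corollary~\ref{cor:imp} directly from Theorem~\ref{thm:first}, observing that the condition ``$\Iso(\M)$ acts transitively'' is exactly a restatement of condition (2) of that theorem, while condition (1) becomes automatic. First I would dispatch the ``only if'' direction: if $\M$ is homogeneous, then in particular every isometry between two singletons $\{x\}$ and $\{y\}$ — and every such map is trivially an isometry of $\M_{\restriction\{x\}}$ onto $\M_{\restriction\{y\}}$, since the distance of a point to itself is $0$ — extends to a surjective isometry of $\M$; this says precisely that for all $x,y\in M$ there is some $g\in\Iso(\M)$ with $g(x)=y$, i.e. that the action is transitive.

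For the ``if'' direction, suppose $\Iso(\M)$ acts transitively, and let $\varphi\colon F\to F'$ be an isometry between finite subsets of $M$; I want to show $\varphi$ extends to a surjective isometry. The strategy is to verify the two hypotheses of Theorem~\ref{thm:first}. Hypothesis (1) — that $\varphi$ is an isometry from $\M_{\restriction F}$ onto $\M_{\restriction F'}$ — holds by assumption on $\varphi$ (together with $|F|=|F'|$, which follows from $\varphi$ being a bijective isometry between finite sets). Hypothesis (2) says that for each $x\in F$, the restriction $\varphi_{\restriction\{x\}}$, which sends $x$ to $\varphi(x)\in F'$, extends to a surjective isometry of $\M$; but transitivity of $\Iso(\M)$ supplies exactly such an element $g\in\Iso(\M)$ with $g(x)=\varphi(x)$. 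Hence both hypotheses are met and Theorem~\ref{thm:first} yields a surjective isometry $\overline\varphi$ of $\M$ extending $\varphi$. Since $F,F'$ and $\varphi$ were arbitrary, $\M$ is homogeneous.

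There is essentially no obstacle here: the corollary is a genuine ``straightforward corollary,'' and the only mild point of care is making sure the finiteness of $F$ is recorded (so that Theorem~\ref{thm:first} applies with $n=|F|$) and that a map on a singleton is automatically an isometry, so that transitivity really does produce an \emph{extension} of $\varphi_{\restriction\{x\}}$ rather than merely some isometry moving $x$. All the real content — the inductive ball-swapping construction — has already been carried out in the proof of Theorem~\ref{thm:first}.

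\begin{proof}
If $\M$ is homogeneous then, for any $x,y\in M$, the map sending $x$ to $y$ is an isometry between one-element subsets of $M$, hence extends to some $g\in\Iso(\M)$; thus $\Iso(\M)$ acts transitively on $M$. Conversely, assume $\Iso(\M)$ acts transitively and let $\varphi\colon F\to F'$ be an isometry between finite subsets of $M$. Then $|F|=|F'|=:n$ and, being a bijective isometry, $\varphi$ is an isometry from $\M_{\restriction F}$ onto $\M_{\restriction F'}$, which is condition (1) of Theorem~\ref{thm:first}. For each $x\in F$, transitivity provides some $g\in\Iso(\M)$ with $g(x)=\varphi(x)$; since the restriction $\varphi_{\restriction\{x\}}$ is just the map $x\mapsto\varphi(x)$, such a $g$ extends $\varphi_{\restriction\{x\}}$, giving condition (2). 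By Theorem~\ref{thm:first}, $\varphi$ extends to a surjective isometry of $\M$. As $\varphi$ was an arbitrary isometry between finite subsets, $\M$ is homogeneous.
\end{proof}
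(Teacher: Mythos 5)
Your proof is correct and follows exactly the route the paper intends: the paper presents Corollary~\ref{cor:imp} as a straightforward consequence of Theorem~\ref{thm:first}, and your verification of conditions (1) and (2) (with transitivity supplying the singleton extensions) is precisely that argument, just written out in full.
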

We did not find mention of this  result in the literature.   We proved it  in \cite {DLPS} for countable ultrametric spaces, as  a step  in our characterization of  countable homogeneous  ultrametric spaces.

It is not true that every  isometry $\varphi: B\rightarrow A$ of a subset of a  homogeneous ultrametric space $A$ extends to an isometry defined on $A$. Indeed, if  $B$ is the image of $A$ by some isometry, its  inverse will not extend. One may ask whether   $\varphi$ extends if it  is an isometry of $B$  onto itself (the question was asked to J.~Melleray and communicated to us \cite{melleray}).  The answer is negative, even if $A$ is separable.  

\begin{example} Let $\omega^\omega$  be the set of integer valued $\omega$-sequence and  $\omega^{[\omega]}$ be the subset of those with finite support. Let $A$ be one of these  two sets equipped with the distance $d$ defined in Example \ref{examples1}. 
Set $A_{00}:= \{x\in A: x(0)=x(1)=0\}$, $A_{01^+}:= \{x\in A: x(0)=0, x(1)\geq 1\}$, $A_1:= \{x\in A: x(0)=1\}$, $A_{2^+}:= \{x\in A: x(0)\geq 2\}$. Let $B:=A_{01^+}\cup A_1 \cup A_{2^+}$  and $\varphi: B\rightarrow B$ which is the identity on  $ A_{2^+}$ and is the  bijection of order two from  $A_{01^+}$ onto  $A_1$ defined by $\varphi((0,i, \dots ))=(1, i-1,\dots))$. It is easy to check that $\varphi$ is an isometry from $B$ onto itself (note that the distance between any elements $x\in  A_{2^+}$, $y\in A_{01^+}$ and   $z\in A_1$ is $1$. But, if $t$ is an  element of $A_{00}$, there is no way to extend $\varphi $ to $t$. Indeed, pick $y\in A_{01^+}$. If $\varphi$ extends to $t$, then the image $t'$ of $t$ satisfies $d(t', \varphi(y))=d(t, y)=\frac{1}{2}$. Hence, $t'\in A_1$ and the extension of $\varphi$ is not one to one. \end{example} 

We conclude this section with an open problem. 

\begin{problem}
Corollary \ref{cor:imp} does not hold  for ordinary metric spaces. There are several metric spaces which are not homogeneous while their  automorphisms group is transitive ( a simple example is the product $\Z\times \Z$ equipped with the sup-distance ($d((x,y), (x',y'))= sup \{d(x,x'), d(y,y')\}$). The notion of ultrametric space extends to metric with values into a join-semilattice $V$ with a least element $0$. We ask for which $V$, ultrametric spaces with values in $V$ whose automorphism group is transitive are homogeneous?. \end{problem}

We may note that the answer is trivial if $V$ has two elements, and positive also if $V$ is the $4$-element Boolean algebra. 


\section{Examples of homogeneous ultrametric spaces}

Corollary \ref{cor:imp} leaves us with the problem of deciding under which conditions $\Iso(\M)$  acts transitively on an ultrametric space  $\M$, that is to characterize the ultrametric spaces $\M:=(M,d)$  for which for all elements $x,x^\prime \in M$ there exists   $\varphi\in \Iso(\M)$ with $\varphi(x)=x^\prime$. In the following subsection we consider some necessary conditions. 

\subsection{Transitivity conditions}\label{section:transitivity}

\begin{definition}
An ultrametric  space $\M:=(M,d)$ has {\em property \textbf{h}} if it has properties $\mathbf{h_1}$ and $\mathbf{h_2}$ below, that is:
\begin{itemize}
\item[$\mathbf{h_1:}$] $\Spec(\M,x)=\Spec(\M,x^\prime)$ for all $x,x^\prime\in M$. 
\item[$\mathbf{h_2:}$] $\mathbf{s_{\M}}(B)=\mathbf{s_{\M}}(B^\prime)$, that is $|\Sons(B)|=|\Sons(B^\prime)|$,  for all $B,B^\prime\in \Nerv(\M)$ with $\delta(B)=\delta(B^\prime)$.
\end{itemize}
\end{definition}
Clearly, $\M$ has property $\mathbf{h_1}$ if and only if $\Spec(\M,x)=\Spec(\M)$ for every $x\in M$.  If $\M$ has property $\mathbf{h}$, we have $\mathbf{s_{\M}}(B)= \mathbf{s_{\M}}(r)$ for every   $B\in Nerv(\M)$ with diameter $r$. In this case, we call the map $\mathbf{s_{\M}}$ the \emph{degree sequence} of $\M$. 
If $\M$ is homogeneous  then,  trivially, it satisfies property $\mathbf{h_1}$. According to Item $(5)$ of Fact \ref{fact:stated} it satisfies property $\mathbf{h_2}$, hence: 

\begin{lemma}\label{lem:h}
Every homogeneous ultrametric space satisfies  property \textbf{h}. 
\end{lemma}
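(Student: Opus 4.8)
The final statement to prove is Lemma~\ref{lem:h}: every homogeneous ultrametric space satisfies property \textbf{h}. This is quite straightforward given what precedes it, so my plan is short.

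\medskip

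The plan is to verify properties $\mathbf{h_1}$ and $\mathbf{h_2}$ separately, each as a direct consequence of homogeneity together with facts already recorded. For $\mathbf{h_1}$: given $x,x'\in M$, homogeneity applied to the isometry $\{x\}\to\{x'\}$ (a single point, trivially an isometry between one-element subspaces) yields some $\varphi\in\Iso(\M)$ with $\varphi(x)=x'$. Since $\varphi$ is a surjective isometry of $\M$, for every $y\in M$ we have $d(x',\varphi(y))=d(x,y)$, and as $y$ ranges over $M$ so does $\varphi(y)$; hence $\Spec(\M,x)=\Spec(\M,x')$. This is exactly the observation that a transitive isometry group forces $\mathbf{h_1}$, and it is the content of the first half of the lemma. (Equivalently, one notes $\Spec(\M,x)=\Spec(\M)$ for all $x$, the reformulation stated just before the lemma.)

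\medskip

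For $\mathbf{h_2}$: let $B,B'\in\Nerv(\M)$ with $\delta(B)=\delta(B')=:r$. Pick $x\in B$ and $x'\in B'$. By homogeneity there is $\varphi\in\Iso(\M)$ with $\varphi(x)=x'$. Now invoke Item~$(5)$ of Fact~\ref{fact:stated}: since $B,B'\in\Nerv(\M)$ have equal diameter, $x\in B$, $x'\in B'$, and $\varphi\in\Iso(\M)$ sends $x$ to $x'$, it follows that $\varphi[B]=B'$ and $\varphi$ induces a bijection $\Sons(B)\to\Sons(B')$, whence $\mathbf{s_{\M}}(B)=\mathbf{s_{\M}}(B')$. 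Combining the two parts gives property \textbf{h}.

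\medskip

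There is essentially no obstacle here — the lemma is a bookkeeping corollary. The only point requiring a moment's care is that $\mathbf{h_1}$ must be established (at least implicitly, via transitivity) before one can apply Fact~\ref{fact:stated}~$(5)$, since that Fact presupposes an element $\varphi\in\Iso(\M)$ mapping a chosen point of $B$ to a chosen point of $B'$, and producing such a $\varphi$ is precisely what transitivity of $\Iso(\M)$ — a consequence of homogeneity — provides. Everything else is immediate.
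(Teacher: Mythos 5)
Your proof is correct and follows the same route as the paper: $\mathbf{h_1}$ is immediate from transitivity of $\Iso(\M)$ (a consequence of homogeneity), and $\mathbf{h_2}$ is obtained by applying Item~$(5)$ of Fact~\ref{fact:stated} to an isometry carrying a point of $B$ to a point of $B'$. You have merely spelled out the details the paper leaves as "trivial."
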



Let $V$ be  a subset of $\R^+$ containing $\{0\}$. Let $V_{*}:= V\setminus \{0\}$. 
As we will see (in (3) of Proposition \ref{prop:ultrahom}), every map $\mathbf {s}$ which associate to each $r\in V_{*}$ some cardinal number $\mathbf {s}(r)\geq 2$ is the degree sequence $\mathbf {s}_{\M}$ of some homogeneous ultrametric space $\M$. Looking at the collection $\mathcal M_{\mathbf {s}}(V)$ of ultrametric spaces having spectrum $V$ and degree sequence $\mathbf {s}$, we note that if $V$ is countable and $\mathbf {s}(r)$ is countable for each $r$ then it follows from Theorem 2.7 of \cite{DLPS} that up to isometry there is a unique countable ultrametric  space $\M_{\mathbf {s}}(\Fin(V))$ with degree sequence $\mathbf {s}$ and  furthermore this ultrametric space  is homogeneous. Hence, for countable spaces, property \textbf{h} is a sufficient condition for transitivity. If one considers uncountable spaces then property \textbf{h} is not in general a sufficient condition for transitivity. Examples show that topological conditions have to be satisfied as well. Observing that the Cauchy completion of $\M_{\mathbf {s}}(\Fin(V))$ is  also homogeneous,  we show that, up to isometry, this is the unique  separable and Cauchy complete ultrametric space with degree sequence $\mathbf {s}$. But Cauchy completeness with property \textbf{h} is not in general a sufficient condition for transitivity.  It is unlikely that there be a  simple characterization of ultrametric spaces $\M$ for which $\Iso(\M)$ is transitive.

\begin{examples}\label{ex:counter}
\begin{enumerate}
\item
In the space $\mathbf K$ of Examples \ref{examples1},   let $\mathbf{0}$ be the constant 0-sequence $\mathbf{1}$ the constant 1-sequence, and {\mathversion{bold}$0^\prime$}  the sequence $(0,1,0,0,0,0,\ldots)$. Then $d(\boldsymbol{0},\boldsymbol{0}\boldsymbol{^\prime})=\frac{1}{2}$ and $d(\boldsymbol{0},\boldsymbol{1})=d(\boldsymbol{0}\boldsymbol{^\prime},\boldsymbol{1})=1$. Let $\mathbf K^\prime$ be obtained from $\mathbf K$ by removing the point $\boldsymbol{0}\boldsymbol{^\prime}$ from $\mathbf K$.  Then each member  of $\Nerv(\mathbf K^\prime)$ still has exactly two sons, and $\Spec(\mathbf K^\prime, x)=\Spec(\mathbf K^\prime, y)$ for all points $x,y$ of $\mathbf K^\prime$. Hence $\mathbf K^\prime$ satisfies Property \textbf{h}. But no element $\varphi$ of $\Iso(\mathbf K^\prime)$ maps $\mathbf{0}$ to $\mathbf{1}$ because $\varphi$ would have to map the ball $\hat{B}(\mathbf{0},\frac{1}{2})$ of $\mathbf K'$ onto the ball  $\hat{B}(\mathbf{1},\frac{1}{2})$ of $\mathbf K'$ which is impossible because the ball $\hat{B}(\mathbf{0},\frac{1}{2})$ is not Cauchy complete whereas the ball $\hat{B}(\mathbf{1},\frac{1}{2})$ is Cauchy complete. Note that this example also shows that requiring in addition to Property \textbf{h} that balls of the same diameter have the same cardinality still is not a sufficient condition. Of course a similar example can be constructed starting with the ultrametric space $\LL_{\kappa}$ of Examples \ref{examples1}.

\item The construction above will not work in the case of the countable ultrametric space $\mathbf {s}$ because no ball of $\mathbf {s}$ is Cauchy complete; of course it would also contradict  Theorem 2.7 of \cite{DLPS}.  But for a more direct argument, using the same elements  $\mathbf{0}, \boldsymbol{0}\boldsymbol{^\prime}$ and $\boldsymbol{1}$  for $\mathbf {s}$ as for $\mathbf K$ above,  we remove $  \boldsymbol{0}\boldsymbol{^\prime}$ from $\mathbf{C}$ to obtain the ultrametric space $\mathbb{C}^\prime$. Then we construct an isometry of the ball $\hat{B}(\mathbf{0},\frac{1}{2})$ of $\mathbf{C}^\prime$  onto the ball  $\hat{B}(\mathbf{1},\frac{1}{2})$ of $\mathbf{C}^\prime$ by a simple back and forth argument. The crucial step for this argument is to realize  that every partial isometry of  the ball   $\hat{B}(\mathbf{0},\frac{1}{2})$ of $\mathbf{C}^\prime$  onto the ball  $\hat{B}(\mathbf{1},\frac{1}{2})$ of $\mathbf{C}^\prime$ can be extended to include  any element of $\hat{B}(\mathbf{0},\frac{1}{2})$ or of $\hat{B}(\mathbf{1},\frac{1}{2})$. During this construction the missing limits of Cauchy sequences are simply rearranged. 

\item An example of Cauchy complete space satisfying property \textbf{h}  which is not homogeneous is the following. Let $V:=V_{0}\cup V_1\cup\{0\}$, where $V_i:=\{\frac{1}{n+2}+\frac{i}{2}: n\in \N\}$ for $i\in \{0,1\}$. Let $M:= 2^{(V_0\cup V_{1})}$ be the set of  $0-1$ maps $f$ defined on $V_0\cup V_1$ and $d: M\times M\to  V$ be given by $d(f,g):= 0$ if $f=g$ and $d(f,g):=\Max \{x\in V_{0}\cup V_1:  f(x)\not = g(x)\}$. The space $\M:=(M, d)$ is Cauchy complete and homogeneous. Its restriction $\M_{\restriction X}$ where $X:=\{f\in M: f(x)=0 \; \text {for some} \; x\in V_1\}$ is Cauchy complete but not homogeneous.

\end{enumerate}
\end{examples}

\subsection{Constructions of homogeneous ultrametric spaces} 

Let $V$ be a subset of $\RR^+$  which contains $0$ and let $V_{*}:= V\setminus \{0\}$.   Let $\mathbf {s}$ be a function which associate a cardinal number $\mathbf {s}(r)\geq 2$ to  each $r\in V_{*}$.  Let $V_{\mathbf {s}}:= \Pi_{r\in V_{*}}\mathbf {s}(r)$.  Viewing each $\mathbf {s}(r)$ as an ordinal number, let $0$ be its least element and let also  $0$ be the element  of $V_{\mathbf {s}}$ which takes value $0\in\mathbf {s}(r)$ for each $r\in  V_{*}$.  For $f,  g\in V_{\mathbf {s}}$, we set  $\Delta (f,g):=\{r\in V_{*}: f(r)\not =g(r)\}$. For $f\in V_{\mathbf {s}}$,   the \emph {support} of $f$ is the set  $\supp(f):=\Delta(f, 0)= \{r\in V_{*}: f(r)\not =0\}$.  Let $\sigma:=(\sigma_r)_{r\in V_{*}}$, where each  $\sigma_{r}$ is a permutation of $\mathbf {s}(r)$. 
 If $f\in V_{\mathbf {s}}$, set $\overline {\sigma}(f):= (\sigma_{r}(f(r)))_ {r\in V_{*}}$. This defines a permutation $\overline  \sigma$ of  $V_{\mathbf {s}}$. It satisfies:

 \begin{fact}\label{fact:inegalite}  $\Delta (\overline {\sigma}(f), \overline {\sigma}(g)) =\Delta (f, g)$ for every $f, g \in V_{\mathbf {s}}$. 
 \end{fact}

 \begin{fact}\label{fact:delta}$\Delta (f,g)\vartriangle \Delta (f,h)\subseteq \Delta (h,g)\subseteq \Delta (f,h)\cup \Delta (f,g)$ for every $f,g, h\in V_{\mathbf {s}}$.
\end{fact}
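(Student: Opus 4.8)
This is Fact~\ref{fact:delta}, which asserts the inclusions
\[
\Delta(f,g) \vartriangle \Delta(f,h) \subseteq \Delta(h,g) \subseteq \Delta(f,h) \cup \Delta(f,g)
\]
for all $f,g,h \in V_{\mathbf{s}}$, where $\vartriangle$ denotes symmetric difference. The plan is to argue pointwise: fix $r \in V_*$ and track the three truth values $f(r) = g(r)$, $f(r) = h(r)$, $h(r) = g(r)$, recalling that $r \in \Delta(a,b)$ exactly means $a(r) \neq b(r)$.

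\medskip
\noindent\textbf{Proof sketch.} For the right-hand inclusion, suppose $r \notin \Delta(f,h) \cup \Delta(f,g)$; then $h(r) = f(r)$ and $g(r) = f(r)$, so $h(r) = g(r)$, i.e.\ $r \notin \Delta(h,g)$. Contrapositively, $\Delta(h,g) \subseteq \Delta(f,h) \cup \Delta(f,g)$.

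For the left-hand inclusion, let $r \in \Delta(f,g) \vartriangle \Delta(f,h)$. By symmetry of the two sets in this expression it suffices to treat the case $r \in \Delta(f,g) \setminus \Delta(f,h)$ (the other case is obtained by swapping the roles of $g$ and $h$, noting that $\Delta(f,g)\vartriangle\Delta(f,h) = \Delta(f,h)\vartriangle\Delta(f,g)$ and that the target $\Delta(h,g) = \Delta(g,h)$ is itself symmetric). Then $f(r) \neq g(r)$ while $f(r) = h(r)$; substituting $h(r)$ for $f(r)$ gives $h(r) \neq g(r)$, so $r \in \Delta(h,g)$. Hence $\Delta(f,g) \vartriangle \Delta(f,h) \subseteq \Delta(h,g)$. \hfill$\square$

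\medskip
\noindent There is no real obstacle here: the statement is the elementary fact that the "disagreement set" map $\Delta$ behaves like a pseudometric valued in the power set of $V_*$ under symmetric difference, and the verification is a finite case check at each coordinate $r$. The only point worth a moment's care is the symmetry reduction in the left-hand inclusion, so that one does not have to write out both cases $r \in \Delta(f,g)\setminus\Delta(f,h)$ and $r \in \Delta(f,h)\setminus\Delta(f,g)$ separately; both reduce, by relabeling, to the single substitution argument above. (One could alternatively phrase the whole fact as: $\Delta$ satisfies $\Delta(f,g) \subseteq \Delta(f,h) \cup \Delta(h,g)$ — the "triangle inequality" — for all $f,g,h$, and then the two displayed inclusions are immediate consequences, the right one by applying this with the roles permuted and the left one by combining $\Delta(f,g)\subseteq\Delta(f,h)\cup\Delta(h,g)$ with $\Delta(f,h)\subseteq\Delta(f,g)\cup\Delta(g,h)$.)
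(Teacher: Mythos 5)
Your proof is correct, and since the paper states this fact without proof (it is the routine pointwise verification that the disagreement sets $\Delta$ satisfy a triangle-type inequality coordinate by coordinate), your argument is exactly the intended one.
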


 Let  $\mathcal J$ be a collection of subsets of  $V_{*}$. Set $V_{\mathbf {s}}(\mathcal J):= \{f\in V_{\mathbf {s}}:  \supp(f)\in \mathcal J\}$. For $f,g\in V_{\mathbf {s}}$, we set $f\equiv_{\mathcal J}  g$ if $\Delta (f,g)\in \mathcal J$.
 
 \begin{lemma}  If  $\mathcal J$ is an initial segment of subsets of $V_{*}$ (that is closed under downward inclusion), then  $\mathcal J$ is an ideal of subsets of $V_{*}$ if and only if $\equiv_{\mathcal J}$ is an equivalence relation on $V_{\mathbf {s}}$. 
 \end{lemma}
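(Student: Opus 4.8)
The plan is to unpack what it means for $\equiv_{\mathcal J}$ to be an equivalence relation and to match each clause with a closure property of $\mathcal J$. Symmetry is free: since $\Delta(f,g)=\Delta(g,f)$ for all $f,g\in V_{\mathbf s}$, the relation $f\equiv_{\mathcal J}g$ is automatically symmetric. Reflexivity amounts to $\emptyset=\Delta(f,f)\in\mathcal J$, and since $V_{\mathbf s}\neq\emptyset$ (it contains the element $0$), this is exactly the condition $\emptyset\in\mathcal J$. So, granting that $\mathcal J$ is already an initial segment, the whole content of the asserted equivalence is: \emph{transitivity of $\equiv_{\mathcal J}$} is equivalent to \emph{closure of $\mathcal J$ under pairwise unions}, together with the bookkeeping remark that an ideal, being nonempty and downward closed, contains $\emptyset$.

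For the implication from ``$\mathcal J$ is an ideal'' to ``$\equiv_{\mathcal J}$ is an equivalence relation'': reflexivity and symmetry are handled as above (an ideal is nonempty and downward closed, hence $\emptyset\in\mathcal J$); for transitivity, suppose $\Delta(f,g)\in\mathcal J$ and $\Delta(g,h)\in\mathcal J$. By Fact~\ref{fact:delta} we have $\Delta(f,h)\subseteq\Delta(f,g)\cup\Delta(g,h)$, the right-hand side lies in $\mathcal J$ because $\mathcal J$ is closed under unions, and then $\Delta(f,h)\in\mathcal J$ because $\mathcal J$ is an initial segment. Hence $f\equiv_{\mathcal J}h$.

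For the converse, assume $\equiv_{\mathcal J}$ is an equivalence relation. Reflexivity gives $\emptyset\in\mathcal J$, so $\mathcal J\neq\emptyset$; it is downward closed by hypothesis; the only thing left to prove is closure under pairwise union. Given $A,B\in\mathcal J$, the idea is to realize $A$, $B\setminus A$ and $A\cup B$ simultaneously as difference sets of a single triple in $V_{\mathbf s}$: put $g:=0$, let $f$ be the $\{0,1\}$-valued element of $V_{\mathbf s}$ with $\supp(f)=A$, and let $h$ be the $\{0,1\}$-valued element with $\supp(h)=B\setminus A$ --- this is legitimate because each $\mathbf s(r)\geq 2$ provides the value $1$. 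Then $\Delta(f,g)=\supp(f)=A\in\mathcal J$ and $\Delta(g,h)=\supp(h)=B\setminus A\in\mathcal J$ (the latter because $B\setminus A\subseteq B$ and $\mathcal J$ is an initial segment), so transitivity yields $\Delta(f,h)\in\mathcal J$; and since $f$ and $h$ are $\{0,1\}$-valued with disjoint supports, $\Delta(f,h)=\supp(f)\vartriangle\supp(h)=A\cup(B\setminus A)=A\cup B$. Thus $A\cup B\in\mathcal J$, and $\mathcal J$ is an ideal.

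The one point that needs care is the construction in the converse. The naive attempt of forcing $\Delta(f,g)=A$ and $\Delta(g,h)=B$ and hoping $\Delta(f,h)=A\cup B$ fails, because it would require three pairwise distinct values at each point of $A\cap B$, which is unavailable when $\mathbf s(r)=2$; passing from $B$ to the \emph{disjoint} set $B\setminus A$ removes this obstacle, at the price of invoking downward closure of $\mathcal J$ to keep $B\setminus A$ inside $\mathcal J$, which is precisely where the initial-segment hypothesis is used in an essential way. The degenerate case $\mathcal J=\emptyset$ (not an ideal, and $\equiv_{\mathcal J}$ not reflexive since $0\in V_{\mathbf s}$) is absorbed by the reflexivity step, so no separate discussion is needed.
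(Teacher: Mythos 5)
Your proof is correct and follows essentially the same route as the paper: the forward direction uses Fact~\ref{fact:delta} exactly as in the paper, and the converse reduces closure under unions to transitivity via $\{0,1\}$-valued characteristic functions, with the same disjointification trick (the paper chains $0\equiv_{\mathcal J}\chi_{X\setminus Y}\equiv_{\mathcal J}\chi_{X\cup Y}$, you chain $\chi_A\equiv_{\mathcal J}0\equiv_{\mathcal J}\chi_{B\setminus A}$, which is only a cosmetic rearrangement). No gaps.
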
 

 \begin{proof}
 Suppose that  $\mathcal J$ is an ideal of subsets of $V_{*}$. Being non empty, it contains the empty set, hence $f\equiv_{\mathcal J} f$ for every $f$. Suppose that $f\ \equiv_{\mathcal J}g\equiv_{\mathcal J} h$. According to Fact \ref{fact:delta}, we have $ \Delta (f,h)\subseteq \Delta (f,g)\cup \Delta (h,g)$. Since $\mathcal J$ is an ideal, $ \Delta (f,h)\cup \Delta (h,g)\in \mathcal J$ and hence $\Delta (f,h)\in \mathcal J$, proving  $f\equiv_{\mathcal J}h$. The symmetry being obvious, it follows that $\equiv_J$ is an equivalence relation. 

Conversely, suppose that this is an equivalence relation.  Let $X, Y\in \mathcal J$, we show that $X\cup Y\in \mathcal J$. The set $X\cup Y$ is the support of its characteristic function $\chi_{X\cup Y}$ which belong to $V_{\mathbf {s}}$. Hence it suffices to show that $0\equiv_{\mathcal J} \chi_{X\cup Y}$.  The characteristic functions $\chi_{X\setminus Y}$ of  $X\setminus Y$ and  $\chi_{Y}$ of $Y$ belong to $V_{\mathbf {s}}$. Their supports are $X\setminus Y$ and $Y$ which belong to $\mathcal J$. Hence   $0\equiv_{\mathcal J} \chi_{X\setminus Y}\equiv_{\mathcal J} \chi_{X\cup Y}$. Thus  $0\equiv_{\mathcal J} \chi_{X\cup Y}$ as required.\end{proof}
 
For each subset $X$ of $\RR$ we set $\mu(X):= +\infty$ if $X$ is not bounded above in $\RR^+$ and $\mu (X):= \Sup_{\RR} (X)$ otherwise. Let $f, g\in  V_{\mathbf {s}}$. We set    $d(f,g):=\mu (\Delta(f,g))$. Let $Bound(V)$ be the collections of subsets $X$ of $V_{*}$ which are bounded above in $\RR^+$. Since $Bound(V)$ is an ideal, $\equiv_{Bound(V)}$ is an equivalence relation and  on each equivalence class, $d$  induces an ultrametric distance. If we extend the definition of ultrametric to functions taking infinite values then the pair  $\M_{\mathbf {s}}:=(V_{\mathbf {s}}, d)$ is an ultrametric space.  The group $\Iso(\M_{\mathbf {s}})$ of isometries of $\M_{\mathbf {s}}$ is transitive. Indeed, let  $f, g\in V_{\mathbf {s}}$, and   $\sigma:=(\sigma_r)_{r\in V\setminus \{0\}}$ where $\sigma_{r}$ is the transposition of $\mathbf {s}(r)$ which exchanges $f(r)$ and $g(r)$. Then  $\overline {\sigma}(f)=g$ and according to Fact \ref{fact:inegalite}, $\overline {\sigma}$ is an isometry of $\M_{\mathbf {s}}$. 
 Trivially, every isometry $u$  of $\M_{\mathbf {s}}$ preserves the equivalence relation  hence the image $C'$ of  an equivalence class $C$ is an equivalence class and thus $u$ is an isometry of $\M_{\mathbf {s} \restriction {C}}$ on $\M_{\mathbf {s} \restriction {C'}}$. Since $\Iso(\M_{\mathbf {s}})$ is transitive, the spaces induced on two different equivalence classes are isometric, and for each equivalence class $C$, $\Iso (\M_{\mathbf {s} \restriction {C}})$ is transitive. From Corollary \ref{cor:imp}, $\M_{\mathbf {s} \restriction {C}}$ is homogeneous. The equivalence class of $0$ is the set $V_{\mathbf {s}}(Bound(V))$ and the space induced on it is  $\M_{\mathbf {s}}(Bound(V))$.

 These facts extend to various ideals of subsets of $V_{*}$. 
 
  If $\mathcal J$ is a collection of subsets of $V_{*}$, we set  $\M_{\mathbf {s}}(\mathcal J):= \M_{\mathbf {s} \restriction V_{\mathbf {s}}(\mathcal J)}$.  We denote by $\Fin(V)$ the collection of finite subsets of $V_{*}$ and by $\Well(V)$ the collection of subsets of $V_{*}$ which are dually well ordered. If $\alpha$ is a countable ordinal, we denote by $\Well_{\alpha}(V)$ the subset of $\Well(V)$ made of subsets $A$ such that $\alpha$ is not embeddable into $(A, \geq)$. With this definition, $\Fin(V)= \Well_{\omega}(V)$. If $\alpha$ is an indecomposable ordinal (\emph{i.e.} $\alpha= \omega^{\beta}$ for some ordinal $\beta$), then $\Well_{\alpha}(V)$ is an ideal of subsets of $V_{*}$. 
 
 The following proposition provides several examples of homogeneous ultrametric spaces. 
 \begin{proposition} \label{prop:ultrahom}Let $\mathcal J$ be a subset of $Bound(V)$. Then:  \begin{enumerate}
 \item $\M_{\mathbf {s} }(\mathcal J)$ is an ultrametric space. \label {item:ultram}
\item \label{Item:Cauchy-completion}Let $\tilde {\mathcal J}:= \{A\in Bound(V): [a, +\infty [\cap A\in \mathcal J \;  \text{for all }\; a\in \RR^+\setminus \{0\}\}$. If $\mathcal J$ is an initial segment of $(\powerset(V_{*}), \subseteq)$ then
the space $\M_{\mathbf {s} }(\tilde{\mathcal J})$ is the Cauchy completion of $\M_{\mathbf {s} }({\mathcal J})$.
\item If $\mathcal J$  is an ideal of subsets of  $V_{*}$ then $\M_{\mathbf {s} }(\mathcal J)$ is a homogeneous ultrametric space and $\Spec(\M_{\mathbf {s} }(\mathcal J))={\mathcal J^{\vee}}:= \{Sup (W): W\in \mathcal J\}$.  If $
\Fin(V)\subseteq  \mathcal J\subseteq  \Well(V)$ then its degree sequence is $\mathbf{s}$. \label{item:spectrum} 
\end{enumerate}
 \end{proposition}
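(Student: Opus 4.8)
The plan is to verify the three claims of Proposition~\ref{prop:ultrahom} in order, using the structural facts already established.

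For item~(1), the key observation is that $\mathcal J\subseteq Bound(V)$ means every $f\in V_{\mathbf s}(\mathcal J)$ has support bounded above in $\RR^+$, so $d(f,g)=\mu(\Delta(f,g))$ is finite for all $f,g\in V_{\mathbf s}(\mathcal J)$: indeed $\Delta(f,g)\subseteq \supp(f)\cup\supp(g)$, which is bounded. That $d$ is an ultrametric distance then follows from Fact~\ref{fact:delta}: $\Delta(f,h)\subseteq\Delta(f,g)\cup\Delta(g,h)$ gives $\mu(\Delta(f,h))\le\Max\{\mu(\Delta(f,g)),\mu(\Delta(g,h))\}$, and $d(f,g)=0$ iff $\Delta(f,g)=\varnothing$ iff $f=g$; symmetry is clear. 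Note that here, unlike in the discussion preceding the proposition, we do not need to pass to an equivalence class first, since boundedness is already built into $\mathcal J$.

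For item~(3), assume $\mathcal J$ is an ideal. First, transitivity of $\Iso(\M_{\mathbf s}(\mathcal J))$: given $f,g\in V_{\mathbf s}(\mathcal J)$, take $\sigma=(\sigma_r)_{r\in V_*}$ with $\sigma_r$ the transposition of $\mathbf s(r)$ swapping $f(r)$ and $g(r)$ (identity where $f(r)=g(r)$), exactly as in the discussion above the proposition. Then $\overline\sigma(f)=g$, and by Fact~\ref{fact:inegalite} $\overline\sigma$ preserves $\Delta$, hence preserves $d$ and also maps $V_{\mathbf s}(\mathcal J)$ onto itself because $\Delta(\overline\sigma(h),0)$ differs from $\supp(h)$ only on the finite set $\Delta(f,g)$, and $\mathcal J$ is an ideal closed under symmetric difference with finite sets — more carefully, $\supp(\overline\sigma(h))\subseteq \supp(h)\cup\Delta(f,g)\in\mathcal J$ and symmetrically, so $\overline\sigma$ is a bijection of $V_{\mathbf s}(\mathcal J)$. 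By Corollary~\ref{cor:imp}, $\M_{\mathbf s}(\mathcal J)$ is homogeneous. For the spectrum: $d(f,g)=\Sup(\Delta(f,g))$ and $\Delta(f,g)$ ranges exactly over the members of $\mathcal J$ realizable as a symmetric difference; every $W\in\mathcal J$ is $\Delta(\chi_W,0)$ with $\chi_W\in V_{\mathbf s}(\mathcal J)$ (using $\mathbf s(r)\ge 2$ to have a value $\ne 0$), giving $\Spec(\M_{\mathbf s}(\mathcal J))=\mathcal J^{\vee}$. Finally, if $\Fin(V)\subseteq\mathcal J\subseteq\Well(V)$: for $B\in\Nerv(\M_{\mathbf s}(\mathcal J))$ of diameter $r$, one fixes $f\in B$ and identifies sons of $B$ with values at $r$; since $\supp$ values in $\mathcal J$ are dually well ordered, $\{s\in V_*: s>r\}\cap\supp(f)$ has a determined behavior and one checks every value in $\mathbf s(r)$ is attained by some son (using $\Fin(V)\subseteq\mathcal J$ to exhibit the relevant functions differing from $f$ only at $r$), so $\mathbf s_{\M_{\mathbf s}(\mathcal J)}(B)=\mathbf s(r)$.

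For item~(2), the plan is a standard completion argument. Assume $\mathcal J$ is an initial segment. One shows $V_{\mathbf s}(\tilde{\mathcal J})\supseteq V_{\mathbf s}(\mathcal J)$ is dense and Cauchy complete in $\M_{\mathbf s}$, and that $\M_{\mathbf s}(\mathcal J)$ is dense in it. Density: given $f\in V_{\mathbf s}(\tilde{\mathcal J})$ and $\varepsilon>0$, truncate $f$ to agree with $0$ below $\varepsilon$, i.e. let $f'$ agree with $f$ on $[\varepsilon,+\infty)\cap V_*$ and be $0$ elsewhere; then $d(f,f')\le\varepsilon$ and $\supp(f')=[\varepsilon,+\infty)\cap\supp(f)\in\mathcal J$ by definition of $\tilde{\mathcal J}$, so $f'\in V_{\mathbf s}(\mathcal J)$. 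Completeness: a Cauchy sequence $(f_n)$ in $\M_{\mathbf s}$ has a coordinatewise limit $f$ (coordinates stabilize), and one verifies $\supp(f)$ satisfies the $\tilde{\mathcal J}$-condition: for each $a>0$, $[a,+\infty)\cap\supp(f)$ equals $[a,+\infty)\cap\supp(f_n)$ for $n$ large (since $d(f,f_n)<a$ eventually), and the latter is in $\mathcal J$ because $\supp(f_n)\in\mathcal J$ and $\mathcal J$ is an initial segment. Hence $f\in V_{\mathbf s}(\tilde{\mathcal J})$ and $f_n\to f$. Together these give that $\M_{\mathbf s}(\tilde{\mathcal J})$ is the Cauchy completion of $\M_{\mathbf s}(\mathcal J)$.

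The main obstacle is the degree-sequence computation at the end of item~(3): one must show not merely $\mathbf s_{\M}(B)\le\mathbf s(r)$ but equality, i.e. that every potential son is nonempty, which forces a careful use of both hypotheses $\Fin(V)\subseteq\mathcal J$ (to build witnessing functions) and $\mathcal J\subseteq\Well(V)$ (to control the structure of supports above $r$ so that the ball $B$ genuinely splits into $\mathbf s(r)$ pieces and no more). The completion argument and the ultrametric verification are routine by comparison.
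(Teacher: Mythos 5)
Your proposal is correct and follows essentially the same route as the paper: direct verification of the ultrametric inequality via Fact~\ref{fact:delta} for item~(1), truncation plus coordinatewise stabilization of Cauchy sequences for item~(2), and for item~(3) the transpositions $\overline{\sigma}$ combined with Corollary~\ref{cor:imp} for homogeneity, characteristic functions (using $\mathbf{s}(r)\geq 2$) for the spectrum, and the identification of the sons of a ball of diameter $r$ with values in $\mathbf{s}(r)$, where $\mathcal J\subseteq\Well(V)$ gives the upper bound (the supremum defining the distance is attained, so distinct sons differ at $r$) and $\Fin(V)\subseteq\mathcal J$ together with the ideal property gives the lower bound. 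Two cosmetic slips only: $\Delta(f,g)$ need not be finite (your ``more carefully'' clause already repairs this), and in the completeness step the supports $\supp(f_n)$ lie in $\tilde{\mathcal J}$ rather than $\mathcal J$, so $[a,+\infty)\cap\supp(f_n)\in\mathcal J$ follows directly from the definition of $\tilde{\mathcal J}$ rather than from the initial-segment property.
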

 
 \begin{proof}
Item \ref{item:ultram}. The space $\M_{\mathbf {s} }({\mathcal J})$ is the restriction to $V_{\mathbf {s}}(\mathcal J)$ of the ultrametric space $\M_{\mathbf {s} }(Bound(V))$. \\
Item \ref{Item:Cauchy-completion}. (a) $V_{\mathbf {s}}(\mathcal J)$ is topologically dense in $\M_{\mathbf {s}}(\tilde{\mathcal J})$. Let $f\in V_{\mathbf {s}}(\tilde{\mathcal J})$. Let $(a_n)_{n\in \N}$ be a strictly decreasing sequence of reals converging to zero. For each $n\in \N$, select $f_n\in V_{\mathbf {s}}$ such that $f_n(x)=f(x)$ for $x\in A_n:=[a_n, +\infty[\cap \supp(f)$ and $f_n(x)= 0$ for $x\in V\setminus (\supp(f)\cup \{0\})$.  Clearly, $\supp(f_n)= A_n\in \mathcal {J}$, thus $f_n\in  V_{\mathbf {s} }(\mathcal J)$. Clearly, $f=lim_{n\rightarrow +\infty}f_n$, hence $f$ belongs to the topological closure of  $V_{\mathbf {s} }(\mathcal J)$ in $\M_{\mathbf {s} }(\tilde{\mathcal J})$. (b) The space  $\M_{\mathbf {s}}(\tilde{\mathcal J})$ is Cauchy complete. Let $(f_n)_{n\in\N}$ be a Cauchy sequence in $\M_{\mathbf {s}}(\tilde{\mathcal J})$.  Let $(a_n)_{n\in \N}$ be a strictly decreasing sequence of reals converging to zero. Since $(f_n)_{n\in\N}$ is a Cauchy sequence, there is a strictly increasing sequence of integers $(n_k)_{k\in \N}$ such that the distance $d$ on $\M_{\mathbf {s}}(\tilde{\mathcal J})$ satisfies  $d(f_{n_k}, f_m)<a_k$ for all $k$ and $m$ such that $m\geq n_k$. Let $f$ be defined by $f(x):=f_{n_k}(x)$ for $x\in V\cap [a_k, a_{k-1}[$ for all $k\in \N$ (with the convention that $a_{-1}:=+\infty$). Then $\supp(f)\in \tilde{\mathcal J}$, that is $[a, +\infty [\cap \supp(f)\in \mathcal {J}$ for all $a\in \RR^+\setminus \{0\}$.  Indeed let $a\in \RR^+\setminus \{0\}$, and let $k\in \N$ such that   $a_k\leq a$. We further claim that $f$ and $f_{n_k}$ coincide on $[a_k, +\infty[$. For this, let $m\geq n_k$. For every $k'\leq k$ we have $d(f_{n_{k'}}, f_m)=\Sup (\Delta(f_{n_{k'}}, f_m))<a_{k'}$ meaning that $f_{n_{k'}} $ and  $f_m$ coincide on $[a_{k'}, +\infty[$. Due to its definition,   $f$ coincide with $f_m$ on  $[a_k, +\infty[$, proving our claim. Now, since $\supp(f_{n_k})\in \tilde{\mathcal J}$, $[a, +\infty[\cap \supp(f_{n_k})\in \mathcal J$. Hence, our claim ensures that $[a, +\infty [\cap \supp(f)\in \mathcal {J}$,  as required.

Item \ref{item:spectrum}. By Corollary \ref{cor:imp}, in order to prove that $\M_{\mathbf {s} }(\mathcal J)$ is homogeneous it suffices to prove that the group $\Iso(\M_{\mathbf {s} }(\mathcal J))$ acts transitively  on $\M_{\mathbf {s} }(\mathcal J)$. This verification is similar to that previously done in the case of $\mathcal J= Bound(V)$.

Now let $B\in \Nerv(\M_{\mathbf {s}})$ with $r=\delta(B)\not = 0$. For every  $x\in B$,  set $x_r$ for the restriction of $x$ to $V_{*}\cap ]r, \infty[$. Note that two elements  $x, x'$ of $B$ belong to two distinct sons if and only $x_r=x'_r$ and $x(r)\not = x'(r)$. Since $x(r),x'(r)\in \mathbf s(r)$, we have ${\mathbf s}_{\M_{\mathbf {s}}}(B)\leq \mathbf s(r)$. Let $y:=x_r$ for some $x\in B$. Since $\mathcal J$ is an initial segment, the element $\overline y$ of $V_{\mathbf{s}}$ which coincide with $y$ on   $V_{*}\cap ]r, \infty[$ and is $0$ everywhere else belongs to  $V_{\mathbf {s}}(\mathcal J)$. Since $\mathcal J$ is an ideal containing $\Fin (V)$, the elements of $V_{\mathbf{s}}$ which coincide with $\overline y$  outside $r$ and take any value belonging to ${\mathbf {s}(r)}$ are in $V_{\mathbf {s}}(\mathcal J)$. Hence ${\mathbf s}_{\M_{\mathbf {s}}}(B)= \mathbf s(r)$.  \end{proof}
 

\begin{lemma}\label{lem:almostsurj}If $V$ is dually well ordered and $\varphi$ is an isometry of  $\M:= \M_{\mathbf {s}}(Well(V))$ then there is some $f\in \M$ such that $\supp (\varphi(f))= V_*$. 
\end{lemma}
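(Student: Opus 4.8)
The plan is to exploit the structure of $\M := \M_{\mathbf s}(\Well(V))$ when $V$ is itself dually well ordered: here $V_*$ has a largest element, and more generally every nonempty subset of $V_*$ has a supremum which it attains. The claim to prove is that some $f \in \M$ has image under $\varphi$ with full support $V_*$, i.e. $\varphi(f)(r) \neq 0$ for every $r \in V_*$. The natural candidate for $f$ is a maximal support element: since $V \in \Well(V)$ (as $V$, hence $V_*$, is dually well ordered), the constant-value-one function $e$ with $\supp(e) = V_*$ lies in $\M$. I would then try to show $\varphi(e)$ has full support, or failing that, argue by contradiction about the set $\{\supp(\varphi(g)) : g \in \M\}$.

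**Key steps.**
First I would observe that $\varphi$, being an isometry, preserves $\Nerv(\M)$ and maps a ball of diameter $r$ to a ball of diameter $r$ (this is the content of Fact~\ref{fact:stated}, items (4)--(5)). Second, I would note that for $f \in \M$ with $\supp(f) = V_*$, the quantity $d(f, 0) = \mu(\Delta(f,0)) = \mu(V_*) = \Max(V_*)$ is the largest value in $\Spec(\M)$, and more refined: for every $r \in V_*$, $f$ is ``active at level $r$'' in the sense that the son of $\hat B(f, r')$ containing $f$, where $r'$ is the successor-level above $r$ in $V_*$, is properly contained in $\hat B(f,r)$ — equivalently $f$ witnesses a branching at every level. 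The support $V_*$ of $f$ is exactly the set of radii $r$ at which $f$ sits strictly inside $\hat B(f,r)$ relative to the son structure. Third, the crucial point: given any $g = \varphi(f)$, I want to recover $\supp(g)$ intrinsically from the isometry-invariant data attached to $f$ inside the tree $\Nerv(\M)$, so that $\supp(\varphi(f)) = \supp(f)$ up to the action. Concretely, $r \in \supp(f)$ iff in the chain of closed balls $\hat B(f,t)$, $t \in \Spec(\M,f)$, the ball $\hat B(f,r)$ properly contains the next open ball down — and this chain is carried by $\varphi$ to the corresponding chain for $g$. Since $V$ is dually well ordered, $\Spec(\M,f) = V$ for the full-support $f$, and this is preserved, forcing $\supp(\varphi(f)) = V_*$.

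**Main obstacle.**
The delicate part is that $\supp(f)$ is \emph{not} literally an isometry invariant of $f$: two points can have the same ``branching profile'' in the tree while having different supports, because the value $0$ is just one of $\mathbf s(r)$ symbols with no distinguished role under $\overline\sigma$-type isometries. So the naive ``transport the support'' argument is wrong. What \emph{is} invariant is $\Spec(\M, f) = \{d(f,x) : x \in M\}$, the set of levels at which $f$ actually branches inside $\M$. The real content of the lemma is therefore: the full-support element $e$ has $\Spec(\M, e) = V$ (it branches at every level, since at each $r \in V_*$ there are other functions agreeing with $e$ above $r$ and differing at $r$), $\varphi(e)$ inherits $\Spec(\M,\varphi(e)) = V$, and then I must show that in $\M_{\mathbf s}(\Well(V))$ \emph{any} element with full branching spectrum $V$ must have support equal to $V_*$ — here is where dual well-ordering of $V$ is used decisively, ruling out an element that branches at all levels of $V_*$ yet has a ``gap'' in its support. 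I expect the cleanest route is: if $\supp(\varphi(e)) \neq V_*$, pick $r_0 \in V_* \setminus \supp(\varphi(e))$; then $\varphi(e)(r_0) = 0$, but I claim $\varphi(e)$ still branches at $r_0$ (that is automatic from $\Spec = V$), and by applying a coordinate permutation $\overline\sigma$ at level $r_0$ — which is an isometry of $\M$ — I can move $\varphi(e)$ to a point $h$ with $r_0 \in \supp(h)$ while keeping all other coordinates fixed; iterating/transfinitely closing this over all missing levels (using that the missing set, being a subset of the dually well ordered $V_*$, is itself dually well ordered, so a back-and-forth or induction terminates appropriately) produces the desired full-support preimage. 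Thus the statement should be read as: \emph{there exists} an $f$ (namely a suitable $\overline\sigma$-translate of $\varphi^{-1}$ of a full-support element) with $\supp(\varphi(f)) = V_*$; the honest proof composes $\varphi$ with a product of level-wise transpositions to fix up the support, and the only thing to check carefully is that this product is a genuine isometry of $\M$, which follows from Fact~\ref{fact:inegalite}.
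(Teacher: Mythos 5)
There is a genuine gap, and it sits exactly where you placed your ``main obstacle''. The lemma's conclusion is about $\supp(\varphi(f))$ for the \emph{given} $\varphi$, and your repair operates on the wrong side: you move $\varphi(e)$ by a coordinate permutation $\overline\sigma$ (or, as you say, ``compose $\varphi$ with a product of level-wise transpositions''). Since, as you yourself observed, the isometries $\overline\sigma$ do not preserve supports, producing a full-support point of the form $\overline\sigma(\varphi(e))$, or an $f$ with $\supp(\overline\sigma\varphi(f))=V_{*}$, proves nothing about $\supp(\varphi(f))$. Worse, your phrase ``$\overline\sigma$-translate of $\varphi^{-1}$ of a full-support element'' presupposes that $\varphi$ is surjective; but the lemma must hold for isometric embeddings of $\M$ into itself (it is applied in Corollary \ref{cor:strictembedding} to a map whose range lies in the proper subspace $\M_{\mathbf {s}}(Well_{\alpha}(V))$), and for surjective $\varphi$ the statement is trivial anyway ($f:=\varphi^{-1}(e)$). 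Finally, the intermediate claim that ``any element with full branching spectrum $V$ must have support $V_{*}$'' is false: the space is homogeneous, so \emph{every} point has spectrum $V$, including the zero function, whose support is empty. So neither the spectrum-transport step nor the $\overline\sigma$-fix-up, as described, yields the lemma.

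The workable version of your fix-up idea modifies the \emph{input}, not the image, and this is essentially the paper's proof: enumerate $V_{*}$ in decreasing order as $(r_{\alpha})_{\alpha<\gamma}$ and build $f$ by transfinite recursion. At stage $\alpha$, take two elements $g_{\alpha,0},g_{\alpha,1}$ agreeing with the already-constructed values $f(r_{\beta})$ ($\beta<\alpha$) and differing at $r_{\alpha}$; then $d(g_{\alpha,0},g_{\alpha,1})=r_{\alpha}$, so their images differ at coordinate $r_{\alpha}$ and at least one of $\varphi(g_{\alpha,i})(r_{\alpha})$ is nonzero; set $f(r_{\alpha}):=g_{\alpha,i}(r_{\alpha})$ for such an $i$. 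Since $f$ agrees with $g_{\alpha,i}$ at all levels $\geq r_{\alpha}$, we get $d(f,g_{\alpha,i})<r_{\alpha}$, hence $\varphi(f)(r_{\alpha})=\varphi(g_{\alpha,i})(r_{\alpha})\neq 0$ — note that this is where the real difficulty you glossed over is handled: changing the input at level $r_{\alpha}$ may wreck the image at \emph{lower} levels, which is why the recursion must run top-down and fix each level once and for all. Dual well-ordering of $V$ is used to run this recursion and to guarantee that the resulting $f$, whatever its support, lies in $\M_{\mathbf {s}}(Well(V))$; no surjectivity of $\varphi$ and no composition with $\overline\sigma$ is needed (Fact \ref{fact:inegalite} plays no role).
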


\begin{proof}Let $\gamma$ be the order type of $V_{*}$ dually ordered and $(r_{\alpha})_{\alpha<\gamma}$ be an enumeration of $V_{*}$ respecting the dual order. We define $f$ by induction. Let $\alpha < \gamma$. We suppose $f(r_{\beta})$ defined for every $\beta<\alpha$. Since $\mathbf s (r_{\alpha})\geq 2$,  the set  $B_{\alpha}$ of $g\in \M_{\mathbf {s}}(Well(V))$ such that $g(r_{\beta})=f(r_{\beta})$ for all $\beta<\alpha$ contains two elements $g_{\alpha, 0}$ and $g_{\alpha,  1}$ such that $g_{\alpha, 0}(r_{\alpha})\not = g_{\alpha,1}(r_{\alpha})$. Since $d_{\M}(g_{\alpha, 0}, g_{\alpha,  1})= r_{\alpha}$, $d_{\M}(\varphi(g_{\alpha, 0}), \varphi(g_{\alpha,  1}))= r_{\alpha}$, hence $\varphi(g_{\alpha, 0})(r_{\alpha})\not = \varphi(g_{\alpha, 1})(r_{\alpha})$. One of these two values is distinct from $0$. Take $f(r_{\alpha}):= g_{\alpha, i_{\alpha}}(r_{\alpha})$, where $i_{\alpha}$ is minimum in $\{0, 1\}$ such that $\varphi(g_{\alpha, i_{\alpha}})(r_{\alpha})\not =0$. To conclude, we check that $\supp (\varphi(f))= V_{*}$. For that, it suffices to observe  that  $\varphi (f)(r_{\alpha})= \varphi(g_{\alpha, i_{\alpha}})(r_{\alpha})$ for every $\alpha<\gamma$. This is obvious: $f$ and $g_{\alpha, i_{\alpha}}$ coincide up to $r_{\alpha}$; since $\varphi$ is an isometry, $\varphi(f)$ and $\varphi(g_{\alpha, i_{\alpha}})$ coincide up to $r_{\alpha}$.   \end{proof}

\begin{corollary}\label{cor:strictembedding}
Let $\alpha$ and $\beta$ be two countable ordinals with $\alpha<\beta$ and $V$ be  a subset of $\RR^+$ containing $\{0\}$  such that $V_{*}$ contains  a subset of type $\beta^*$, the dual of $\beta$. Then $\M_{\mathbf {s}}(Well_{\alpha}(V))$ is strictly isometrically embeddable into $\M_{\mathbf {s}}(Well_{\beta}(V))$. 
\end{corollary}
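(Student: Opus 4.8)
The plan is to establish the two parts of ``strictly isometrically embeddable'' separately. For the existence of an embedding of $\M_{\mathbf s}(\Well_\alpha(V))$ into $\M_{\mathbf s}(\Well_\beta(V))$: since $\alpha<\beta$, the ordinal $\alpha$ embeds into $\beta$ as an initial segment, so whenever $\beta$ embeds into a dually well ordered subset $A$ of $V_*$ then so does $\alpha$; contrapositively $\Well_\alpha(V)\subseteq\Well_\beta(V)$, and the inclusion $V_{\mathbf s}(\Well_\alpha(V))\subseteq V_{\mathbf s}(\Well_\beta(V))$ is an isometry from $\M_{\mathbf s}(\Well_\alpha(V))$ into $\M_{\mathbf s}(\Well_\beta(V))$. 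The substance of the corollary is therefore to show that there is \emph{no} isometry from $\M_{\mathbf s}(\Well_\beta(V))$ into $\M_{\mathbf s}(\Well_\alpha(V))$.

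For this I would argue by contradiction, running the recursion of Lemma \ref{lem:almostsurj} but stopping after $\alpha$ steps. Suppose $\psi$ is such an isometry. Using the hypothesis, fix a subset $W=\{r_\eta:\eta<\beta\}$ of $V_*$ enumerated strictly decreasingly, so that $(W,\geq)$ has order type $\beta$, and for $\eta\le\beta$ put $W_\eta:=\{r_{\eta'}:\eta'<\eta\}$, an initial segment of $(W,\geq)$ of order type $\eta$. Note that, in both spaces, $d(f,g)=\Max \Delta(f,g)$ for $f\ne g$, since $\Delta(f,g)$ is then a nonempty dually well ordered subset of $V_*$, hence has a greatest element.

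The aim is to build, by transfinite recursion on $\eta<\alpha$, an element $f\in V_{\mathbf s}$ with $\supp(f)\subseteq W_\alpha$ such that $\psi(f)(r_\eta)\ne0$ for all $\eta<\alpha$. At stage $\eta$, with $f$ already prescribed on $W_\eta$, use $\mathbf s(r_\eta)\ge2$ to choose $g_{\eta,0},g_{\eta,1}\in V_{\mathbf s}$ agreeing with $f$ on $W_\eta$, taking two distinct values at $r_\eta$, and vanishing at every other point of $V_*$; their supports lie in $W_{\eta+1}$, whose order type $\eta+1$ is $<\beta$ (as $\eta<\alpha<\beta$), so $g_{\eta,0},g_{\eta,1}\in\M_{\mathbf s}(\Well_\beta(V))$ and $d(g_{\eta,0},g_{\eta,1})=r_\eta$. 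Applying $\psi$ gives $\psi(g_{\eta,0})(r_\eta)\ne\psi(g_{\eta,1})(r_\eta)$, so some value $\psi(g_{\eta,i_\eta})(r_\eta)$ is non-zero; set $f(r_\eta):=g_{\eta,i_\eta}(r_\eta)$. Since $f$ and $g_{\eta,i_\eta}$ then agree on $V_*\cap[r_\eta,+\infty[$, we get $d(f,g_{\eta,i_\eta})<r_\eta$, hence $\psi(f)$ and $\psi(g_{\eta,i_\eta})$ agree above $r_\eta$, in particular $\psi(f)(r_\eta)=\psi(g_{\eta,i_\eta})(r_\eta)\ne0$. At the end, $\supp(f)\subseteq W_\alpha$ has order type $\le\alpha<\beta$, so $f\in\M_{\mathbf s}(\Well_\beta(V))$; but $W_\alpha\subseteq\supp(\psi(f))$ and $(W_\alpha,\geq)$ has order type $\alpha$, so $\alpha$ embeds into $(\supp(\psi(f)),\geq)$, contradicting $\psi(f)\in\M_{\mathbf s}(\Well_\alpha(V))$. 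This contradiction finishes the proof.

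The step that requires the most care is the bookkeeping which keeps every auxiliary function $g_{\eta,i}$ in the domain of $\psi$ (that is, with support of order type strictly below $\beta$) while still letting the recursion reach length $\alpha$; this is precisely where the hypotheses $\alpha<\beta$ and ``$V_*$ contains a copy of $\beta^*$'' are used. The remaining ingredients --- that distances are maxima of difference sets, that agreement above $r_\eta$ is preserved by an isometry, and that $\mathbf s(r_\eta)\ge2$ supplies the branching at each coordinate --- are a direct reprise of the proof of Lemma \ref{lem:almostsurj}.
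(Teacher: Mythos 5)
Your proof is correct and follows essentially the same route as the paper: the inclusion $\Well_{\alpha}(V)\subseteq \Well_{\beta}(V)$ gives the embedding, and the impossibility of an isometry in the other direction is derived from the coordinate-by-coordinate recursion underlying Lemma \ref{lem:almostsurj}. The only difference is presentational: the paper restricts the hypothetical embedding to a copy $V'$ of type $\beta^*$ via $\theta(f):=\varphi(f)_{\restriction V'}$ so as to invoke Lemma \ref{lem:almostsurj} verbatim (getting an image of full support $V'_{*}$), whereas you inline a truncated, length-$\alpha$ version of that recursion directly in $V$, which correctly keeps the constructed $f$ inside $\M_{\mathbf {s}}(\Well_{\beta}(V))$ while forcing a copy of $\alpha$ into $\supp(\psi(f))$.
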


\begin{proof} Since $Well_{\alpha}(V) \subseteq Well_{\beta}(V)$, we conclude that $\M_{\mathbf {s}}(Well_{\alpha}(V))$  is an   isometric subset of  $\M_{\mathbf {s}}(Well_{\beta}(V))$. Suppose that there is an isometric embedding of $\M_{\mathbf {s}}(Well_{\beta}(V))$ into $\M_{\mathbf {s}}(Well_{\alpha}(V))$.
Let $V'$ be a subset of $V$ such that $V_{*}'$ has order type $\beta^*$. Let $\mathbf s':= \mathbf s_{\restriction V'}$. The subset of $\M_{\mathbf {s}}(Well_{\beta}(V))$ made of $f$ such that $\supp (f)\subseteq V'$ coincide with $Well (V')$, hence $\M_{\mathbf {s'}}(Well(V'))$ is an isometric subset of $\M_{\mathbf {s}}(Well_{\beta}(V))$ and thus isometrically embeds into  $\M_{\mathbf {s}}(Well_{\alpha}(V))$. This is impossible. Indeed, let $\varphi$ be an embedding. Let $\theta$ be defined by setting $\theta(f):= \varphi(f)_{\restriction V'}$ for every $f\in Well(V')$. Then $\theta$ is an isometric embedding of $\M_{\mathbf {s'}}(Well(V'))$ into $\M_{\mathbf {s'}}(Well_{\alpha}(V'))$ (indeed, let $f,g\in Well (V')$; since $\varphi$ is an isometry, $d(f,g)=d(\varphi (f), \varphi(g))$, since  $d(f,g)\in V'$, $d(\varphi (f), \varphi(g))=d(\varphi ( f)_{\restriction V'}, \varphi g_{\restriction V'})$). According to Lemma \ref{lem:almostsurj} there is some $f \in \M_{\mathbf {s'}}(Well(V'))$ such that $\supp (\theta (f))= V'_*$, contradicting the fact that $\theta(f)$ must belong to $Well_{\alpha} (V')$. 

\end{proof}

\begin{theorem}\label {aleph1chain} Let $V$ be a subset of $\RR^+$ containing $\Q^+$ and $\mathbf{s}$ be a cardinal function with domain $V_{*}$ and such that $\mathbf{s}(r)\geq 2$ for every $r\in V_{*}$. Then there is a strictly increasing  $\omega_{1}$-sequence of homogeneous ultrametric  spaces with spectrum $V$ and cardinal function $\bf{s}$. 
\end{theorem}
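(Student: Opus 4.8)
The plan is to produce the required $\omega_1$-chain directly from Corollary~\ref{cor:strictembedding} applied to the indecomposable (power-of-$\omega$) ordinals. First I would recall that by Proposition~\ref{prop:ultrahom}~\eqref{item:spectrum}, whenever $\mathcal J$ is an ideal of subsets of $V_*$ with $\Fin(V)\subseteq \mathcal J\subseteq \Well(V)$, the space $\M_{\mathbf s}(\mathcal J)$ is a homogeneous ultrametric space with spectrum $\Spec(\M_{\mathbf s}(\mathcal J))=\mathcal J^\vee$ and degree sequence $\mathbf s$. So it remains to find an $\omega_1$-sequence of such ideals, strictly increasing in a way that forces strict isometric embeddings of the corresponding spaces, and to check the spectrum is exactly $V$.

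The natural choice is $\mathcal J_\alpha:=\Well_{\omega^{1+\alpha}}(V)$ for $\alpha<\omega_1$; since each exponent $\omega^{1+\alpha}$ is indecomposable, each $\mathcal J_\alpha$ is an ideal (as noted just before Proposition~\ref{prop:ultrahom}), and $\Fin(V)=\Well_\omega(V)=\mathcal J_0\subseteq \mathcal J_\alpha\subseteq \Well(V)$, so every $\M_{\mathbf s}(\mathcal J_\alpha)$ is homogeneous with degree sequence $\mathbf s$. For $\alpha<\beta<\omega_1$ we have $\omega^{1+\alpha}<\omega^{1+\beta}$, both countable ordinals, and since $V\supseteq \Q^+$, the set $V_*$ contains a copy of every countable ordinal reversed — in particular a subset of type $(\omega^{1+\beta})^*$. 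Hence Corollary~\ref{cor:strictembedding} (with $\alpha$ replaced by $\omega^{1+\alpha}$ and $\beta$ by $\omega^{1+\beta}$) gives that $\M_{\mathbf s}(\Well_{\omega^{1+\alpha}}(V))$ is \emph{strictly} isometrically embeddable into $\M_{\mathbf s}(\Well_{\omega^{1+\beta}}(V))$. This yields the strictly increasing $\omega_1$-sequence.

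The last point to verify is that each member has spectrum exactly $V$, not merely some subset. By Proposition~\ref{prop:ultrahom}~\eqref{item:spectrum}, $\Spec(\M_{\mathbf s}(\mathcal J_\alpha))=\mathcal J_\alpha^\vee=\{\Sup(W):W\in \mathcal J_\alpha\}$. Since $\Fin(V)\subseteq \mathcal J_\alpha$, for every $r\in V_*$ the singleton $\{r\}$ lies in $\mathcal J_\alpha$, giving $r=\Sup\{r\}\in \mathcal J_\alpha^\vee$; and $0=\Sup(\emptyset)\in\mathcal J_\alpha^\vee$. Conversely every $W\in\mathcal J_\alpha\subseteq\powerset(V_*)$ has supremum in $V\cup\{0\}$ — one only needs $V$ to contain the relevant suprema, which holds because the $W\in\Well(V)$ attaining a nonzero supremum do so at a maximum lying in $V_*$ (a dually well-ordered set has a greatest element). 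Thus $\Spec(\M_{\mathbf s}(\mathcal J_\alpha))=V$ for all $\alpha$, completing the construction.

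**Main obstacle.** The genuine content is entirely packaged in Corollary~\ref{cor:strictembedding} (itself resting on Lemma~\ref{lem:almostsurj}), so the only real care needed here is bookkeeping: checking that $\omega^{1+\alpha}$ for $\alpha<\omega_1$ are indecomposable, countable, strictly increasing, that $\Q^+\subseteq V$ supplies order types $(\omega^{1+\beta})^*$ inside $V_*$, and that the spectrum computation gives exactly $V$. I expect the spectrum verification to be the mildly delicate step, since one must confirm $\mathcal J_\alpha^\vee=V$ rather than a proper subset.
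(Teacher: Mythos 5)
Your proposal is correct and follows essentially the same route as the paper: the paper takes $\M_{\gamma}:=\M_{\mathbf{s}}(\Well_{\omega^{\gamma}}(V))$ for $\gamma<\omega_1$, invokes Proposition~\ref{prop:ultrahom} for homogeneity, spectrum and degree sequence, and Corollary~\ref{cor:strictembedding} (via $\Q^+\subseteq V$) for strictness, exactly as you do. Your use of exponents $\omega^{1+\alpha}$ and your explicit check that $\mathcal J_\alpha^{\vee}=V$ are only minor refinements of the paper's argument (the latter is left implicit there), not a different method.
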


\begin{proof}
For each ordinal $\gamma$, let  $\M_{\gamma}:= \M_{\mathbf {s}}(Well_{\omega^{\gamma}}(V))$. Since $\omega^{\gamma}$ is indecomposable, $V_{\omega^{\gamma}}$ is an ideal of subsets of $V_{*}$. According to Proposition \ref {prop:ultrahom}, $\M_{\gamma}$ is homogeneous with spectrum $V$ and degree sequence ${\mathbb s}$. Clearly $\M_{\gamma}$ is embeddable into $\M_{\gamma'}$ whenever $\gamma<\gamma'$. Since $V$ contains $\Q^+$, we may apply  Corollary \ref{cor:strictembedding},  hence this embedding is strict. The sequence $(\M_{\gamma})_{\gamma<\omega_1}$ has the claimed property. \end{proof}

\subsection{Cauchy completion}Let $\M:=(M,d)$ be an ultrametric space, $X$ be a subset of $M$ and $x\in M$, we set $\Spec(\M_{\restriction X}, x):= \{d(x,y): y\in X\}$.  

\begin{lemma} \label{lem:dense}Let $\M:=(M,d)$ be an ultrametric space and $X$ be a topologically dense subset of $M$. Then:
\begin{enumerate}[{(1)}]
\item $\Spec(\M, x)= \Spec(\M_{\restriction X}, x)\cup \{0\}$ for all $x\in M$. In particular $\Spec(\M_{\restriction X}, x)=\Spec(\M, x)$ for all $x\in X$. 
\item $\Spec (\M_{\restriction X} )=\Spec(\M)$.
\item For every non trivial $B\in \Ball (\M)$,  $B\cap X\in \Ball(\M_{\restriction X})$, $B$ is the topological adherence in $\M$ of $B\cap X$ and has the same kind as $B\cap X$. 
\end{enumerate}
\end{lemma}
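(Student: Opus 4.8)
The plan is to prove the three items in order, using density of $X$ in $M$ and the basic properties of ultrametric balls. Throughout, recall that in an ultrametric space every ball is both open and closed, and that balls are either disjoint or nested; I would also lean on the observation (recorded above) that $\delta(A)=\sup\{d(a,x):x\in A\}$ for any $a\in A$.

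For Item (1), fix $x\in M$ and a value $r=d(x,y)\in\Spec(\M,x)$ with $r>0$. The open ball $B(y,r)$ has positive radius and is a non-empty open subset of $M$, so by density it meets $X$ in some point $y'$; since $d(y,y')<r=d(x,y)$, the strong triangle inequality forces $d(x,y')=r$, whence $r\in\Spec(\M_{\restriction X},x)$. The reverse inclusion $\Spec(\M_{\restriction X},x)\subseteq\Spec(\M,x)$ is trivial, and $0\in\Spec(\M,x)$ always; this gives $\Spec(\M,x)=\Spec(\M_{\restriction X},x)\cup\{0\}$. When $x\in X$ we have $0=d(x,x)\in\Spec(\M_{\restriction X},x)$, so the union with $\{0\}$ is redundant and $\Spec(\M_{\restriction X},x)=\Spec(\M,x)$.

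Item (2) is then immediate: $\Spec(\M_{\restriction X})=\bigcup_{x\in X}\Spec(\M_{\restriction X},x)=\bigcup_{x\in X}\Spec(\M,x)$ by the second assertion of (1), and since $X$ is dense it meets every non-trivial ball, so every value $d(u,v)$ with $u,v\in M$, $u\neq v$, is realized between two points of $X$ (apply the argument of (1) twice, first replacing $u$ by a nearby point of $X$, then $v$); hence $\bigcup_{x\in X}\Spec(\M,x)=\Spec(\M)$.

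For Item (3), let $B\in\Ball(\M)$ be non-trivial, say $B=B(a,r)$ or $B=\hat B(a,r)$ with $r>0$; by density we may pick $a\in X$ (replace $a$ by a point of $X$ within distance $<r$ of it, which does not change the ball). Then $B\cap X$ is the corresponding ball of $\M_{\restriction X}$ centered at $a$, so $B\cap X\in\Ball(\M_{\restriction X})$. That $B$ is the topological adherence of $B\cap X$ in $\M$ follows because $B$ is open, hence any point of $B$ is a limit of points of $B\cap X$ by density, while $B$ is also closed, so the adherence is contained in $B$. Finally, $B$ and $B\cap X$ have the same diameter: using $\delta(B)=\sup\{d(a,x):x\in B\}$ and the fact that $X$ is dense in $B$, every distance $d(a,x)$ with $x\in B$ is approximated (in fact, by the ultrametric argument of (1), exactly realized) by some $d(a,x')$ with $x'\in B\cap X$; and the diameter is attained in $B$ iff it is attained in $B\cap X$, again because a point witnessing the supremum can be replaced by a nearby point of $X$ realizing the same distance. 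So $B$ and $B\cap X$ have the same kind.

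The main obstacle is the "same kind" clause of Item (3): one must be careful that the supremum defining the diameter is \emph{attained} in $B\cap X$ exactly when it is attained in $B$. This is where the ultrametric phenomenon is essential — density in a general metric space would only give approximation of distances, not exact realization — so the key point to get right is that if $d(a,x)=s$ for some $x\in B$ then the open ball $B(x,s)\cap X\neq\emptyset$ produces $x'\in B\cap X$ with $d(a,x')=s$ exactly.
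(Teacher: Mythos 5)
Your proposal is correct and follows essentially the same route as the paper: item (1) by picking $x'\in B(y,r)\cap X$ and using the strong triangle inequality, item (2) as a direct consequence, and item (3) by recentering the ball at a point of $X$, using that balls are clopen for the adherence claim, and exactly realizing distances in $B\cap X$ to get the same diameter and the same kind. The only cosmetic difference is that the paper handles the ``attained in both or in none'' point by citing item (1) for the subspace $B\cap X$ dense in $B$, which is the same two-step replacement you carry out explicitly.
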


\begin{proof}
(1) Clearly, $\Spec(\M_{\restriction X}, x)\cup \{0\}\subseteq \Spec(\M, x)$. Conversely,   let $r\in \Spec(\M, x)$. If $r=0$, then there is nothing to prove. Otherwise, pick $y\in M$ such that $d(x,y)=r$. Since $X$ is dense, there is some $x'\in B(y,r)\cap X$. From the strong triangle  inequality, $d(x,x')=r$, hence $r\in  \Spec(\M_{\restriction X}, x)$ as claimed.\\   
\noindent (2) This assertion follows directly from (1). It is due to A.~Lemin \cite{lemin}. \\
\noindent (3) Let $B\in \Ball (\M)$ be a non trivial ball. Let $r:= \delta(B)$. Then, there is some $x\in M$ such that $B$ is either $B(x,r)$ or $\hat {B}(x,r)$. Since $X$ is dense and  $r\not =0$, we may pick some $x'\in B(x,r)\cap X$. Due to the strong triangle inequality, we have $B(x',r)=B(x,r)$  and $\hat B(x',r)=\hat B(x,r)$ from which  follows that $B\cap X\in \Ball(\M_{\restriction X})$. The fact that $B$ is the topological adherence of $B\cap X$ follows directly from the density of $X$ when $B=B(x,r)$.  When $B=\hat {B}(x,r)$ add the observation that by the strong triangle inequality $B(y,r')\subseteq B$ for every $y\in B$, $0<r'\leq r$. Since $B$ is the topological adherence of $B\cap X$, these two balls have  diameter $r$. By assertion (1), $r$ is attained in both or in none. Thus,  these balls have the same kind.
 \end{proof}

\begin{proposition} \label{prop:dense}Let $\M:=(M,d)$ be an ultrametric space and $X$ be a topologically dense subset of $M$. Then $\M_{\restriction  X}$ satisfies property $\mathbf{h_1}$, resp. property $\mathbf{h_2}$,  if and only if $\M$ does. Moreover, when property $\mathbf{h}$ is satisfied, the two spaces have the same degree sequence. 
\end{proposition}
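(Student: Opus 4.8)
The plan is to prove the two directions of each equivalence by transporting the relevant invariants ($\Spec(\cdot,x)$ and $|\Sons(\cdot)|$) between $\M$ and $\M_{\restriction X}$ using Lemma~\ref{lem:dense}. For property $\mathbf{h_1}$: if $\M$ has $\mathbf{h_1}$, then for $x,x'\in X$ we have $\Spec(\M_{\restriction X},x)=\Spec(\M,x)=\Spec(\M,x')=\Spec(\M_{\restriction X},x')$ by part (1) of Lemma~\ref{lem:dense}, so $\M_{\restriction X}$ has $\mathbf{h_1}$. Conversely, suppose $\M_{\restriction X}$ has $\mathbf{h_1}$; given arbitrary $x,x'\in M$, I would first note (again by Lemma~\ref{lem:dense}(1)) that $\Spec(\M,x)=\Spec(\M_{\restriction X},x)\cup\{0\}$, and the task reduces to showing $\Spec(\M_{\restriction X},x)=\Spec(\M_{\restriction X},x')$ for arbitrary $x,x'\in M$ (not just in $X$). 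For this, pick $x_0,x_0'\in X$ close to $x,x'$ respectively: for any $r\in\Spec(\M_{\restriction X},x)$ with $r>0$, take $y\in X$ with $d(x,y)=r$; choosing $x_0\in X$ with $d(x,x_0)<r$ gives $d(x_0,y)=r$ by the strong triangle inequality, so $r\in\Spec(\M_{\restriction X},x_0)=\Spec(\M_{\restriction X},x_0')$, and then reversing the role of $x_0'$ and $x'$ (find $y'\in X$ with $d(x_0',y')=r$ and note $d(x',y')=r$ once $d(x',x_0')<r$, which forces a small-radius argument) yields $r\in\Spec(\M_{\restriction X},x')$. One has to be slightly careful when $r$ is smaller than every available approximation distance, but density lets one take $x_0$ as close to $x$ as desired, so this is not a real obstacle.

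For property $\mathbf{h_2}$: the key tool is Lemma~\ref{lem:dense}(3), which says that for a non-trivial ball $B$ of $\M$, the trace $B\cap X$ is a ball of $\M_{\restriction X}$ of the same diameter and kind, with $B=\overline{B\cap X}$. The plan is to show that the map $A\mapsto A\cap X$ is a bijection from $\Sons(B)$ onto $\Sons(B\cap X)$ whenever $B\in\Nerv(\M)$ is non-trivial with $\delta(B)=r>0$. Each son $A$ of $B$ is an open ball $B(a,r)$ with $a\in B$; since $X$ is dense we may choose $a\in X$ (pick $a'\in X\cap A$, then $B(a',r)=A$), so $A\cap X=B(a',r)\cap X$ is an open ball of $\M_{\restriction X}$ of radius $r$ contained in $B\cap X$, i.e.\ a son of $B\cap X$; and $A\cap X\neq\emptyset$ because $X\cap A\neq\emptyset$ (density: $A$ is non-empty open). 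Injectivity follows since distinct sons are disjoint and each meets $X$. Surjectivity: a son of $B\cap X$ has the form $B(b,r)\cap X$ for some $b\in B\cap X$, and $B(b,r)$ is then a son of $B$ whose trace it is. Hence $|\Sons(B)|=|\Sons(B\cap X)|$. Conversely, every element of $\Nerv(\M_{\restriction X})$ is of the form $\hat B(a,r)\cap X$ for the corresponding $\hat B(a,r)\in\Nerv(\M)$ (using Lemma~\ref{lem:dense}(1) to see the radius $r$ lies in $\Spec(\M,a)$ exactly when it lies in $\Spec(\M_{\restriction X},a)$, together with (3) for the kind), so the correspondence on nerve elements respects diameters. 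Therefore $\mathbf{s}_{\M}(B)=\mathbf{s}_{\M}(B')$ for all $B,B'\in\Nerv(\M)$ of equal diameter if and only if the same holds in $\M_{\restriction X}$, which is property $\mathbf{h_2}$ for each space.

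Finally, for the "moreover" clause: assuming property $\mathbf h$ holds, I have shown that for each non-trivial $B\in\Nerv(\M)$ with $\delta(B)=r$, the corresponding nerve element $B\cap X$ of $\M_{\restriction X}$ has $\delta(B\cap X)=r$ and $|\Sons(B\cap X)|=|\Sons(B)|$; and conversely every non-trivial nerve element of $\M_{\restriction X}$ arises this way. Since under property $\mathbf h$ the degree sequence is the function $r\mapsto\mathbf s_{\M}(B)$ for any $B\in\Nerv$ of diameter $r$ (independence of $B$ being exactly property $\mathbf{h_2}$), and since $\Spec(\M_{\restriction X})=\Spec(\M)$ by Lemma~\ref{lem:dense}(2), the two degree sequences are defined on the same set $\Spec(\M)_*$ and agree value by value. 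I expect the main obstacle to be the bookkeeping in the $\mathbf h_1$ converse — checking that distances realized from arbitrary points of $M$ can be realized from nearby points of $X$ and transferred — but density together with the strong triangle inequality makes each such step routine; no single step should present a genuine difficulty.
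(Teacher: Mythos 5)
Your proof is correct and follows essentially the same route as the paper: the $\mathbf{h_1}$ equivalence is obtained by transferring realized distances to and from nearby points of $X$ via density and the strong triangle inequality, and the $\mathbf{h_2}$ equivalence (and the degree-sequence claim) via the correspondence $B\mapsto B\cap X$ between nerve elements furnished by Lemma~\ref{lem:dense}. The only difference is presentational: you spell out the son-bijection $A\mapsto A\cap X$ explicitly, which the paper leaves implicit as a consequence of Lemma~\ref{lem:dense}(3).
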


\begin{proof} Suppose that $\M$ satisfies property $\mathbf{h_1}$.  Then  by (1) and (2) of Lemma \ref{lem:dense} we have $\Spec(\M_{\restriction X}, x)=\Spec(\M, x)=\Spec (\M_{\restriction X} )$ for every $x\in X$, amounting to the fact that $\M_{\restriction X}$ satisfies property $\mathbf{h_1}$.  For the converse, it suffices to prove  that $\Spec(\M, x)=\Spec(\M)$ for every $x\in M$. Let $x\in M$.  According to  (2) of Lemma \ref{lem:dense} it suffices to prove that $\Spec (\M_{\restriction X}) \subseteq \Spec(\M, x)$. Let $r\in \Spec (\M_{\restriction X})$. If $r=0$ there is nothing to prove. Suppose $r\not =0$. Since $X$ is dense, we may find $x'\in B(x,r)\cap X$. Since $\M_{\restriction X}$ satisfies property $\mathbf{h_1}$, $r\in \Spec(\M_{\restriction X}, x')$. Let $y\in X$ such that $d(x', y)=r$. Due to the strong triangle inequality, $d(x,y)=r$, hence $r\in \Spec(\M, x)$ as claimed. 

We prove now that $\M$ satisfies property $\mathbf{h_2}$ if and only if $\M_{\restriction X}$ satisfies property $\mathbf{h_2}$. From (3) of Lemma \ref{lem:dense} we deduce that   if  $B\in \Nerv(\M)$ then $B\cap X\in \Nerv(\M_{\restriction X})$ and $\mathbf{s_{\M}}(B)=\mathbf {s}_{\M_{\restriction X}}(B\cap X)$.  In particular,  if $\M_{\restriction X}$ has  property $\mathbf{h_2}$ then $\M$ too. For the converse, observe that if  $B'\in \Nerv(\M_{\restriction X})$, then for $r=\delta(B')$ and $x\in B'$, one has $B:= B(x, r)\in \Nerv(\M)$.  Apply then the previous argument.  \end{proof}

\begin{corollary} The Cauchy completion of an ultrametric space satisfying  property $\mathbf{h_1}$, resp. property $\mathbf{h}$, satisfies property $\mathbf{h_1}$, resp. property $\mathbf{h}$. \end{corollary}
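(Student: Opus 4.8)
The plan is to read this off Proposition~\ref{prop:dense} by taking the Cauchy completion as the ambient space and the original space as the dense subset. Let $\M:=(M,d)$ be an ultrametric space and let $\widehat{\M}$ denote its Cauchy completion. First I would recall the two standard facts that make Proposition~\ref{prop:dense} applicable: (a) $\widehat{\M}$ is again an ultrametric space — the strong triangle inequality $d(x,z)\leq \Max\{d(x,y),d(y,z)\}$ passes to limits of Cauchy sequences, so the extended distance on $\widehat{\M}$ is still an ultrametric; and (b) the canonical map $M\to \widehat{M}$ is an isometric embedding whose image is topologically dense in $\widehat{\M}$. Since properties $\mathbf{h_1}$ and $\mathbf{h_2}$ are invariant under isometry (they speak only of the sets $\Spec(\M,x)$ and of the cardinalities $\mathbf{s}_{\M}(B)$ for $B\in\Nerv(\M)$), we may identify $\M$ with its image, so that $\widehat{\M}_{\restriction M}=\M$ and $M$ is dense in $\widehat{M}$.

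With this set-up the conclusion is immediate from Proposition~\ref{prop:dense} applied with $X:=M$. If $\M$ satisfies property $\mathbf{h_1}$, then $\widehat{\M}_{\restriction M}$ satisfies $\mathbf{h_1}$, and since $M$ is dense in $\widehat{M}$, the ``only if'' direction of the proposition gives that $\widehat{\M}$ satisfies $\mathbf{h_1}$. The same argument with $\mathbf{h_2}$ in place of $\mathbf{h_1}$ shows that $\mathbf{h_2}$ is likewise inherited by $\widehat{\M}$. Consequently, if $\M$ has property $\mathbf{h}=\mathbf{h_1}\wedge\mathbf{h_2}$, then $\widehat{\M}$ has both $\mathbf{h_1}$ and $\mathbf{h_2}$, i.e. property $\mathbf{h}$. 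I would also record, using the ``Moreover'' clause of Proposition~\ref{prop:dense}, that when $\mathbf{h}$ holds the completion $\widehat{\M}$ has the same degree sequence as $\M$.

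I do not expect any real obstacle here: the whole substance of the argument has already been isolated in Proposition~\ref{prop:dense}, and what remains is only the (routine, ultrametric-independent) verification that the Cauchy completion realizes $\M$ as a dense isometric subspace of an ultrametric space. If one wishes to be fully self-contained, the single point to spell out is that the universal property of the completion yields both the isometric embedding and the density; everything ultrametric-specific is then handled by the cited proposition and by Lemma~\ref{lem:dense} underlying it.
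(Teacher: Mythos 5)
Your proposal is correct and is exactly the paper's intended derivation: the corollary is stated immediately after Proposition \ref{prop:dense} with no separate proof, precisely because one identifies $\M$ with a dense isometric subspace of its (still ultrametric) Cauchy completion and applies the proposition with $X:=M$. Your additional remark about the degree sequence also matches the ``Moreover'' clause of that proposition.
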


We are now ready to characterize separable, Cauchy complete and homogeneous ultrametric spaces.

\begin{theorem}Let $\M:=(M,d)$ be an ultrametric space. The following properties are equivalent:
\begin{enumerate}[{(i)}]
\item $\M$ is separable, Cauchy complete and homogeneous.
\item $\M$ is isometric to the Cauchy completion of a space of the form $\M_{\mathbf {s}}(\Fin(V))$, where $V$ is a countable subset of $\R^+$ and $\mathbf {s}$ is a map from $V$ into $\omega+1\setminus 2$. 
\end{enumerate}
\end{theorem}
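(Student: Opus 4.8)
The plan is to prove the two implications separately, using the machinery already developed. For $(ii) \Rightarrow (i)$: a space of the form $\M_{\mathbf{s}}(\Fin(V))$ with $V$ countable and $\mathbf{s}$ countable-valued has countably many elements (each element has finite support, there are countably many finite subsets of the countable set $V_*$, and countably many ways to assign values in countable sets), hence is separable, and its Cauchy completion is separable as well; by Proposition~\ref{prop:ultrahom}\eqref{item:spectrum} the space $\M_{\mathbf{s}}(\Fin(V))$ is homogeneous (since $\Fin(V)=\Well_\omega(V)$ is an ideal), and then the remark made right before the statement — that the Cauchy completion of $\M_{\mathbf{s}}(\Fin(V))$ is homogeneous, which itself follows from Theorem~\ref{thm:cauchy} — gives that the completion is homogeneous; Cauchy completeness of a completion is automatic. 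So $(i)$ holds.

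For $(i) \Rightarrow (ii)$: let $\M$ be separable, Cauchy complete and homogeneous. By Lemma~\ref{lem:h}, $\M$ satisfies property $\mathbf h$, so it has a well-defined degree sequence $\mathbf s_{\M}$ with domain $V_* := \Spec(\M)_*$; since $\M$ is separable, $\Spec(\M)=\Spec(\M_{\restriction X})$ is countable for any countable dense $X$ (this is part (2) of Lemma~\ref{lem:dense}, attributed to A.~Lemin), so $V := \Spec(\M)$ is a countable subset of $\R^+$. Separability also forces each degree $\mathbf s_{\M}(B)$ to be at most $\omega$: a ball $B$ of positive diameter containing a dense subset inherits density from $X$, and the sons of $B$ are pairwise at distance $\delta(B)$, so an uncountable family of sons would give an uncountable discrete subspace, contradicting separability. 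Thus $\mathbf s := \mathbf s_{\M}$ is a map from $V_*$ into $\omega+1 \setminus 2$ (the values are $\geq 2$ by Fact~\ref{fact:stated}(3)). Now form $\M_{\mathbf s}(\Fin(V))$: it is, by Theorem~2.7 of \cite{DLPS} (quoted in the introduction), the unique countable homogeneous ultrametric space with spectrum $V$ and degree sequence $\mathbf s$.

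It remains to identify $\M$ with the Cauchy completion of $\M_{\mathbf s}(\Fin(V))$. Take a countable dense subset $X \subseteq M$; then $\M_{\restriction X}$ is a countable ultrametric space which, by Proposition~\ref{prop:dense}, satisfies property $\mathbf h$ with the same degree sequence $\mathbf s$ and the same spectrum $V$ as $\M$. The one genuine obstacle is that $\M_{\restriction X}$ need not be homogeneous (Example~3 of Examples~\ref{ex:counter} shows a dense subspace of a homogeneous space failing homogeneity), so we cannot directly invoke the uniqueness theorem of \cite{DLPS} on $\M_{\restriction X}$ itself. The way around this is to argue at the level of completions: one shows that $\M_{\mathbf s}(\Fin(V))$ embeds densely and isometrically into $\M$ — using homogeneity of $\M$ and the back-and-forth method, exactly as in the proof sketch given in Example~2 of Examples~\ref{ex:counter} (``the missing limits of Cauchy sequences are simply rearranged''), building the embedding by transfinite recursion over a fixed enumeration of the countable space $\M_{\mathbf s}(\Fin(V))$ and extending one point at a time, which is possible because $\M$ is homogeneous and both spaces have the same spectrum and the same (countable) degree sequence — and then, since $\M$ is Cauchy complete and contains a dense isometric copy of $\M_{\mathbf s}(\Fin(V))$, it is isometric to that copy's Cauchy completion. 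I expect the careful verification that the one-point extension step never gets stuck — i.e. that whenever we have a finite partial isometry and a new point $p$ of $\M_{\mathbf s}(\Fin(V))$, the constraints it imposes on its image (a prescribed distance to finitely many already-placed points, all of which are realized in $\M$ because $\Spec(\M,x)=V$ for every $x$, together with a son of the appropriate ball, of which there are enough because degrees match) are simultaneously satisfiable — to be the main technical point; everything else is bookkeeping with Lemma~\ref{lem:dense} and Proposition~\ref{prop:dense}.
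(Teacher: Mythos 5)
Your direction $(ii)\Rightarrow(i)$ is fine and matches the paper (Proposition \ref{prop:ultrahom} plus Theorem \ref{thm:cauchy}, which the paper itself invokes). The problem is in $(i)\Rightarrow(ii)$, and it occurs exactly where you deviate from the paper. The "obstacle" you identify is not one: Theorem 2.7 of \cite{DLPS}, as it is stated and used throughout this paper, asserts uniqueness, up to isometry, of the \emph{countable ultrametric space with a given countable spectrum and degree sequence} -- homogeneity of the space being a conclusion, not a hypothesis (this is precisely what makes the earlier assertion "for countable spaces, property $\mathbf h$ is a sufficient condition for transitivity" meaningful). So the paper simply takes a countable dense $X\subseteq M$, gets property $\mathbf h$ with the same spectrum $V$ and degree sequence $\mathbf s$ from Proposition \ref{prop:dense}, concludes $\M_{\restriction X}$ is isometric to $\M_{\mathbf s}(\Fin(V))$ by that uniqueness, and is done, since $\M$ is the completion of $\M_{\restriction X}$. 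No homogeneity of $\M_{\restriction X}$ is needed.

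Your substitute argument has a genuine gap: the recursion you describe is a one-directional ("forth") extension over an enumeration of $\M_{\mathbf s}(\Fin(V))$, and such a construction produces an isometric copy of $\M_{\mathbf s}(\Fin(V))$ in $\M$ that need not be dense, so you cannot conclude that $\M$ is the completion of the copy. Concretely, take $V=\{0,1\}$ and $\mathbf s(1)=\omega$: then $\M$ is a countably infinite, Cauchy complete, homogeneous space all of whose nonzero distances equal $1$, and a point-by-point embedding of $\M_{\mathbf s}(\Fin(V))$ may land in a co-infinite subset, whose closure is itself and not $\M$. To repair this you must interleave "back" steps against a fixed countable dense subset $X$ of $\M$ (realizing each point of $X$ as an image point, which is possible by the same son-counting argument, since both spaces have degree $\mathbf s(r)$ at every ball of diameter $r$ and only finitely many sons are occupied at each stage); but a full back-and-forth between $\M_{\mathbf s}(\Fin(V))$ and $\M_{\restriction X}$ is exactly a proof of the relevant case of Theorem 2.7 of \cite{DLPS}, so you would be reproving the cited result rather than avoiding it. Finally, note that even the "forth" step you rely on is only asserted ("I expect ... the main technical point"), not proved; it does hold, by the argument just indicated, but as written the proof is incomplete.
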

\begin{proof}
$(i)\Rightarrow (ii)$.  Let $X$ be a countable dense subset of $\M$. Then according to Proposition \ref{prop:dense}, $\M_{\restriction X}$ satisfies property $\mathbf{h}$. Let $V$ be its spectrum and $\mathbf {s}$ be its degree sequence. According to Theorem 2.7 of \cite{DLPS}, up to isometry  there is a unique countable ultrametric space  with degree sequence $\mathbf {s}$, namely  $\M_{\mathbf {s}}(\Fin(V))$. Hence $\M_{\restriction X}$ is isometric to $\M_{\mathbf {s}}(\Fin(V))$. Since $\M$ is Cauchy complete it is isometric to the Cauchy completion of  $\M_{\mathbf {s}}(\Fin(V))$. 

$(ii)\Rightarrow (i)$. Trivially $\M$ is separable and Cauchy complete.  Let $J:= \Fin(V)$. According to Proposition \ref{prop:ultrahom}, the Cauchy completion of $\M_{\mathbf {s}}(J)$ is $\M_{\mathbf {s}}(\tilde {J})$ and since $\tilde  {J} $ is an ideal, $\M_{\mathbf {s}}(\tilde {J})$ is homogeneous. This conclusion also follows from Theorem \ref{thm:cauchy} below. \end{proof}

\subsection{Embedding of an ultrametric space into a homogeneous one}

\begin{theorem}\label{mainresult}

Every ultrametric space $\M$ is embeddable into $\M_{\mathbf{s}}(Well(V))$ where $V=\Spec(\M)$ and $\mathbf {s}=\mathbf {s}_{\M}$. 

\end{theorem}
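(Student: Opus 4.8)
The plan is to embed $\M = (M,d)$ into $\M_{\mathbf s}(\Well(V))$ by assigning to each point of $M$ a function on $V_* = \Spec(\M)\setminus\{0\}$ recording, coordinate by coordinate, "which son" the point sits in at each scale. The natural device is to choose, for each $r \in V_*$ and each ball $B \in \Nerv(\M)$ with $\delta(B) = r$, an injective labelling $\lambda_{B} : \Sons(B) \to \mathbf s_{\M}(r)$ of the sons of $B$ by ordinals below $\mathbf s_{\M}(r)$, with the normalization that $\lambda_B$ sends a fixed son to $0$ when convenient. Then for $x \in M$ and $r \in V_*$, if $r \in \Spec(\M, x)$ so that $B := \hat B(x,r) \in \Nerv(\M)$ has diameter $r$ (by Fact \ref{fact:stated}(3)), set $\varphi(x)(r) := \lambda_B(S)$ where $S$ is the son of $B$ containing $x$; if $r \notin \Spec(\M,x)$, set $\varphi(x)(r) := 0$. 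One has to be slightly careful that the ball $\hat B(x,r)$ of diameter exactly $r$ is well-defined; since $\Spec(\M,x) \subseteq V$, for $r \in \Spec(\M,x)$ the closed ball $\hat B(x,r)$ indeed has diameter $r$, and distinct points of $M$ at distance $r$ lie in distinct sons of this common ball.

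The key steps, in order: (1) Verify $\varphi(x) \in V_{\mathbf s}(\Well(V))$, i.e.\ that $\supp(\varphi(x)) = \Spec(\M,x) \setminus \{0\}$ is dually well ordered. This is where I expect the main obstacle to lie, and it is precisely the reason $\Well(V)$ (rather than $\Fin(V)$ or all of $\mathcal{P}(V_*)$) appears: one must show that for each $x$, the set of $r$ with $r \in \Spec(\M,x)$ has no strictly increasing $\omega$-sequence — equivalently, every nonempty subset of $\Spec(\M,x)$ has a greatest element. But this follows from the tree structure of the nerve: the closed balls $\hat B(x,r)$ for $r \in \Spec(\M,x)$ form a chain under inclusion, and $\delta$ is strictly increasing along it, so $\Spec(\M,x) = \{\delta(B) : B \in \Nerv(\M),\ x \in B\}$; an increasing $\omega$-chain of values would give an increasing $\omega$-chain of balls $\hat B(x,r_n)$ whose union is a ball containing $x$ of diameter $\sup r_n$, but that diameter need not be attained — so in general $\supp(\varphi(x))$ need only be well ordered under $\geq$, which is exactly the defining property of $\Well(V)$. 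I would prove this carefully using only the definitions of nerve and diameter. (2) Verify $\varphi$ is an isometry: given $x \neq y$ with $d(x,y) = r$, show $d(\varphi(x),\varphi(y)) = \Max\Delta(\varphi(x),\varphi(y)) = r$. For $s > r$: $x$ and $y$ lie in the same son of $\hat B(x,s)= \hat B(y,s)$ (since $d(x,y) = r < s$), so $\varphi(x)(s) = \varphi(y)(s)$ when $s \in \Spec(\M,x) = \Spec(\M,y)$ at that scale, and both are $0$ otherwise — here one uses that $s \in \Spec(\M,x) \iff s \in \Spec(\M,y)$ for $s > d(x,y)$, again from the ball structure. For $s = r$: $x$ and $y$ lie in \emph{different} sons of the common ball $\hat B(x,r)$ of diameter $r$ (Fact \ref{fact:stated}(3)), and $\lambda$ is injective on sons, so $\varphi(x)(r) \neq \varphi(y)(r)$. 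Hence $\Max\Delta(\varphi(x),\varphi(y)) = r$.

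One remark: the embedding is into $\M_{\mathbf s}(\Well(V))$ with the same spectrum $V$ and, by Proposition \ref{prop:ultrahom}(3) applied with $\mathcal J = \Well(V)$ (which is an ideal containing $\Fin(V)$), the target is homogeneous with degree sequence $\mathbf s_{\M}$, so the theorem indeed realizes $\M$ inside a homogeneous space with identical distance set and degree sequence. The only genuinely delicate point is step (1) — the well-ordering of supports — and it is worth stating explicitly as a small lemma that for any $x \in M$, the set $\{\,r : r \in \Spec(\M,x)\,\}$, dually ordered, is well founded, because the balls $\hat B(x,r)$ indexed by it form a chain and $\delta$ is an order isomorphism onto it from a sub-chain of the tree $(\Nerv(\M), \supseteq)$ which is itself a ramified meet tree.
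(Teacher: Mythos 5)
Your step (2), the isometry computation, is fine, and your ``son labelling'' picture does describe correctly what the final embedding looks like; but step (1), which you yourself identify as the crux, contains a genuine error. The ``small lemma'' you propose --- that for every $x$ the set $\Spec(\M,x)\setminus\{0\}$, and hence $\supp(\varphi(x))$, is dually well ordered --- is false. Take $M:=\{0\}\cup\{1-\tfrac{1}{n}:n\geq 1\}$ with $d(x,y)=\Max\{x,y\}$ for $x\neq y$ (the first space of Examples \ref{examples1} restricted to this set); then $\Spec(\M,0)=M$ contains a strictly increasing sequence, so it is not well ordered under $\geq$. Consequently, if the injections $\lambda_B:\Sons(B)\to\mathbf{s}(r)$ are chosen ball by ball in advance, the map need not land in $V_{\mathbf{s}}(\Well(V))$ at all: your ``normalization'' of sending a fixed son to $0$ does not exclude, say, labelling the son $B(0,1-\tfrac1n)$ of $\hat{B}(0,1-\tfrac1n)$ by $1$ for every $n$, in which case $\varphi(0)$ has support of order type $\omega$. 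The chain-of-balls argument you sketch derives no contradiction from an increasing sequence in $\Spec(\M,x)$ --- indeed there is none to derive --- so the membership $\varphi(x)\in V_{\mathbf{s}}(\Well(V))$, the whole point of the theorem, is left unproved.

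What is missing is a coordination of the labels across balls and scales, and this is exactly what the paper's proof supplies. It enumerates $M$ as $(a_\alpha)_{\alpha<\kappa}$ and defines $\varphi(a_\alpha)$ by transfinite recursion so that it vanishes below $r_\alpha:=d(a_\alpha,\{a_\beta:\beta<\alpha\})$, coincides on $[r,\infty)$ with some previously constructed image for each $r>r_\alpha$ (whence, by induction, the support of $\varphi(a_\alpha)$ is dually well ordered), and, at $r_\alpha$ when this distance is attained, takes the least ordinal not already used by the sons of $\hat{B}(a_\alpha,r_\alpha)$ met by earlier points; a second step then collapses, for each ball $B$, the set of values actually used (an initial segment of ordinals, by Claim \ref{claim:initial}) onto the ordinal $\mathbf{s}_{\M}(B)\leq\mathbf{s}(r)$, which also yields the extra Claim \ref{claim:extrastep} needed later for Theorem \ref{thm:feinberg}. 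In other words, the labellings $\lambda_B$ of your scheme must be constructed recursively together with the embedding, not fixed independently beforehand; without that mechanism (or some substitute for it), your proposal does not prove the theorem.
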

\begin{proof}The proof is in two steps. 
First step. We define an embedding $\varphi$ of $\M$ into an ultrametric space of the form $\M_{\mathbf{s'}}(Well(V))$ where $\mathbf {s}'$ defined on $V_{*}$ takes ordinal values. At the end of the process, it will appear that  $\mathbf {s}'(r)< \mathbf {s}(r)^{+}$ (that is $\mathbf {s}'(r)= \mathbf {s}(r)$ if $\mathbf {s}(r)$ is finite and $\mathbf {s}'(r)$ is an ordinal with the same cardinality as $\mathbf {s}(r)$). Let $\kappa:= \vert M\vert$ and $(a_{\alpha})_{\alpha<\kappa}$ be a transfinite enumeration of the elements of $M$. We define an embedding $\varphi$ from $\M$  into some $\M_{\mathbf{s'}}(Well(V))$.  The definition is  by induction on $\alpha$. We set $\varphi(a_{0})=0$, the constant function equal to $0$ everywhere. Let $\alpha >0$. We suppose that $\varphi$ is an isometry on the set $A_{\alpha}:= \{a_{\beta}: \beta< \alpha\}$. Our aim is to choose  $\varphi(a_{\alpha})\in \M_{\mathbf {s'}}(Well(V))$  in such a way that $\varphi$ becomes an isometry on $A_{\alpha+1}:= A_{\alpha} \cup \{a_{\alpha}\}$. For $r\in V$ we will define $\varphi(a_{\alpha})(r)$ depending how $r$ compares to $r_{\alpha}:= d(a_{\alpha}, A_{\alpha}) := \Inf \{d(a_{\alpha}, a_{\beta}): \beta<\alpha\}$.

$\bullet$  If  $r<r_{\alpha}$, we set $\varphi(a_{\alpha})=0$. 

$\bullet$ If $r>r_{\alpha}$, then $B_{\alpha}:=B(a_{\alpha}, r)\cap A_{\alpha}\not=\emptyset$. We set $\varphi(a_{\alpha})(r)=\varphi(a_{\beta})(r)$ where $a_{\beta}\in B_{\alpha}$. This value is independent of the choice of $a_{\beta}$. Indeed, for every $a_{\beta}, a_{\beta'}\in B_{\alpha}$, $\varphi(a_{\beta})$ and $\varphi(a_{\beta'})$ coincide on the interval $[r, \infty)\cap V^{*}$. Indeed, let $s:= d(a_{\beta}, a_{\beta'})$. Clearly $s<r$. Since $\varphi$ is an isometry on $A_{\alpha}$, we have $d(a_{\beta}, a_{\beta'})=d(\varphi(a_{\beta}), \varphi(a_{\beta'}))$. Hence $d(\varphi(a_{\beta}), \varphi(a_{\beta'}))=\Sup\{r':\in V_{*}: \varphi(a_{\beta})(r'), \varphi(a_{\beta'})(r')\}=s$. Thus, $\varphi(a_{\beta})$ and $\varphi(a_{\beta'})$ coincide on the interval $]s, \infty)\cap V^{*}$, and in particular on $[r, \infty)\cap V^{*}$. 

$\bullet$ Suppose that $r:=r_{\alpha}$. There are two cases.  Firstly, the distance from $a_{\alpha}$ to $A_{\alpha}$ is not attained. In this case, if $r_{\alpha}\not \in V_{*}$, $\varphi(a_{\alpha})$ is entirely defined already. If $r_{\alpha} \in V_{*}$, then we set  $\varphi(a_{\alpha})(r_{\alpha}):=0$. Secondly, the distance is attained. In this case, set  $C_{\alpha}:=  \{\varphi(a_\beta)(r_{\alpha}): a_{\beta}\in \hat B(a_{\alpha}, r_{\alpha})\cap A_{\alpha}\}$ and let $\varphi(a_{\alpha})(r_{\alpha}):= \Min (\mathbf {s}'(r) \setminus  C_{\alpha})$.  Note that if two elements $a_{\beta}$ , $a_{\beta'}$ belong to a son of $\hat B(a_{\alpha}, r_{\alpha})$ then $d(a_{\beta} , a_{\beta'})=d(\varphi(a_{\beta}), \varphi(a_{\beta'}))<r_{\alpha}$,  hence $\varphi(a_{\beta})(r_{\alpha})=\varphi(a_{\beta'})(r_{\alpha})$. This implies that $\vert C_{\alpha}\vert$ is at most the number of sons of $\hat B(a_{\alpha}, r_{\alpha})$ minus one (the son containing  $a_\alpha$ does not appear). 

We verify that $\varphi_{\alpha}$ has a dually well ordered support. By construction $\supp (\varphi(a_{\alpha}) )\subseteq [ r_{\alpha}, \infty)$ and for each $r\in V_{*}\cap ]r_{\alpha},   \infty) $, $\varphi (a_{\alpha})$ coincide with some $\varphi(a_{\beta})$ on $[r, \infty)$. Since from the induction hypothesis the support of $\varphi(a_{\beta})$ is dually well founded, the support of $\varphi(a_{\alpha})$ is dually well founded.  

Next we verify that $\varphi$ is an isometry on $A_{\alpha+1}$. Let $\beta< \alpha$ and $r:= d(a_{\alpha}, a_{\beta})$. Suppose that $r>r_{\alpha}$. Let $a_{\beta'}\in A_{\alpha}\cap B(a_{\alpha}, r)$. We have $d(a_{\beta}, a_{\beta'})=r$ and $d(a_{\alpha}, a_{\beta'})<r$. By construction,   $\varphi(a_{\alpha})$ coincides with   $\varphi(a_{\beta'})$  on $[r, \infty)$. Also, since the supports of our $\varphi(a_{\beta})'s$ are dually well ordered,  $r$ is the largest index such that $\varphi(a_{\beta})(r)\not = \varphi(a_{\beta '}) (r)$. Thus $\varphi(a_{\beta})(r)\not = \varphi(a_{\alpha})(r)$ hence  $d(\varphi(a_{\alpha}), \varphi(a_{\beta}))=r$. Suppose that $r=r_{\alpha}$.  By construction, $\varphi(a_{\alpha})$ coincides with   $\varphi(a_{\beta})$  on $]r, \infty)$ and $\varphi(a_{\alpha})(r_{\alpha})\not = \varphi(a_{\beta})(r_{\alpha})$. Hence $d(\varphi(a_{\alpha}), \varphi(a_{\beta}))=r$.

The map $\varphi$ is an isometry. A priori, the image is not in
$\M_{\mathbf {s}}(Well(V))$, but we only need a further modification.
To do this, given $\psi$ an embedding of an ultrametric space $\M$
into a space of the form $\M_{\mathbf {s}}(Well(V))$ and $B\in
\Nerv(\M)$ with $r:= \delta(B)\not =0$, we set $\tilde \psi(B):=
\{\psi(a)(r): a \in B\}$.

\begin{claim}\label{claim:initial} For every $B\in \Nerv(\M)$ with $\delta(B)=r\not =0$, the set  $\tilde\varphi(B)$ is an  initial segment of $\mathbf {s}'(r)$ with cardinality $\mathbf {s}_{\M}(B)$. Furthermore, if  $\gamma\in \tilde\varphi(B)$ and  $\alpha$ is the least ordinal such that $a_{\alpha}\in B$ and $\varphi(a_{\alpha})(r)=\gamma$ then $\varphi(a_{\alpha})(r')=0$ for every $r'<r$ with $r'\in V_{*}$. 
\end{claim} 

\noindent{\bf Proof of Claim \ref{claim:initial}.}
The fact that $\vert \tilde \varphi(B)\vert=\mathbf {s}_{\M}(B)$ is immediate. Indeed,    $a_{\alpha}$ and $a_{\alpha'}$ belong to the same son of $B$ iff  $d(a_{\alpha}, a_{\alpha'})<r$ which is equivalent to $\varphi(a_{\alpha})(r')=\varphi(a_{\alpha'})(r')$ for every $r'\geq r$. Since $d(a_{\alpha}, a_{\alpha'})\leq r$, $d(a_{\alpha}, a_{\alpha'})<r$ is equivalent to $\varphi(a_{\alpha})(r')=\varphi(a_{\alpha'})(r')$. We verify that  $\tilde \varphi(B)$ is an  initial segment of $\mathbf {s}'(r)$.  First, $0\in \tilde \varphi(B)$. Indeed, let $\alpha$ be minimum such that $a_{\alpha}\in B$. If $\alpha=0$ then since $\varphi(a_{0})=0$ we have $\varphi(a_{0})(0)=0$ and $0\in \tilde \varphi(B)$ as required. If $\alpha\not =0$ then $d(A_{\alpha}, a_{\alpha}) >r$ indeed, if $d(a_{\beta}, a_{\alpha})\leq r $ for some $\beta< \alpha$ then $a_{\beta}\in B$ contradicting the minimality of $\alpha$. Since $d(A_{\alpha}, a_{\alpha}) >r$ we have by construction $\varphi(a_{\alpha})(r)=0$. Hence $0\in \varphi(\tilde B)$.  Now, let $\gamma\in \tilde \varphi(B)$. We may suppose that for every $\gamma ' \in \tilde \varphi(B)$  with $\gamma'<\gamma$, the interval $(\leftarrow \gamma'] $ is included into  $\tilde \varphi(B)$. Let $\alpha$ be  minimum  such that $a_{\alpha}\in B$ and $\varphi(a_{\alpha})(r)= \gamma$. Then $r=r_{\alpha}$. This is because by construction,  $\varphi(a_{\alpha})(r')=0$ for $r'<r_{\alpha}$, we have $r_{\alpha}\leq r$. If $r_{\alpha}< r$ then $\varphi(a_{\alpha})(r)= \varphi(a_{\beta})(r)$ for some $\beta<\alpha$,  contradicting our choice of $\alpha$. Since $\varphi(a_{\alpha})(r)= \gamma\not =0$, the distance $r_{\alpha}$ is attained. According to our definition $\varphi(a_{\alpha})(r_{\alpha}) = \Min (\mathbf {s}'(r) \setminus  C_{\alpha})$, where   $C_{\alpha}:=  \{\varphi(a_\beta)(r_{\alpha}): a_{\beta}\in \hat B(a_{\alpha}, r_{\alpha})\cap A_{\alpha}\}$. According to our hypothesis $(\leftarrow \varphi(a_\beta)(r_{\alpha})]\subseteq \tilde \varphi(B)$ for every $\beta$, hence $C_{\alpha}$ is an initial segment of $\mathbf {s}'(r)$. Since $\gamma$ is the least element of $\mathbf {s}'(r) \setminus  C_{\alpha}$, $(\leftarrow \gamma]\subseteq \tilde \varphi(B)$. It follows that $\tilde \varphi(B)$ is an initial segment of $\mathbf {s}'(r)$ and that the second part of our claim is satisfied.  
\hfill $\Box$

For each  $B\in \Nerv(\M)$, with $\delta(B)\not = 0$   let $\mathbf {s}_{B}$ be  a bijective map from $\tilde \varphi(B)$ onto $\mathbf{s}_{\M}(B)$ viewed as an ordinal and  such that $0$ is mapped on $0$. 

For each $\alpha<\kappa$, and $r\in V_{*}$, set $\psi (a_{\alpha})(r):=0$ if no member of $\Nerv(\M)$ containing $a_{\alpha}$ has diameter $r$, otherwise set $\psi(a_{\alpha})(r):=\mathbf {s}_{B} (\varphi(a_{\alpha})(r))$ where $B$ is the unique member of $\Nerv(\M)$ which contains $a_{\alpha}$ and has diameter $r$.

Since for each $r\in V_{*}$,  $\psi(a_{\alpha})(r)\in\mathbf {s}_{B}$,  $\psi(a_{\alpha})\in V_{\mathbf {s}}(Well(V))$. This allows to define  a map $\psi$ from $\M$ into  $\M_{\mathbf {s}}(Well(V))$. 

\begin{claim}\label{claim:secondstep} The map $\psi$ is an isometry of $\M$ into  $\M_{\mathbf {s}}(Well(V))$. 
\end{claim}

\noindent{\bf Proof of Claim \ref{claim:secondstep}.}
Let $\beta <\alpha$ and $r:= d(a_{\alpha}, a_{\beta})$. Let $B:= \hat B(a_{\alpha}, r)$. Since $a_{\alpha}, a_{\beta}\in \M$, $B\in \Nerv(\M)$, hence $\psi(a_{\alpha})(r)=\mathbf {s}_{B} (\varphi(a_{\alpha})(r))$ and $\psi(a_{\beta})(r)=\mathbf {s}_{B} (\varphi(a_{\beta})(r))$. Since $\varphi$ is an isometry, $\varphi(a_{\alpha})(r)\not= \varphi(a_{\beta})(r)$ and since $\mathbf {s}_B$ is one-to-one, $\psi(a_{\alpha})(r)\not = \psi(a_{\beta})(r)$. Let $r'>r$ and $B\in \Nerv(\M)$ with $\delta(B)=r'$. If $B$ contains $a_{\alpha}$ it contains $a_{\beta}$ and conversely. If no such $B$ contains $a_{\alpha}$ then $\psi(a_{\alpha} )=0$ and similarly $\psi(a_{\beta} )=0$. Suppose some $B$ contains $a_{\alpha}$ and $a_{\beta}$. We have $\psi(a_{\alpha})(r'):=\mathbf {s}_{B} (\varphi(a_{\alpha})(r'))$ and $\psi(a_{\beta})(r'):=\mathbf {s}_{B} (\varphi(a_{\alpha})(r'))$. Since $r'>r$  $\varphi(a_{\alpha})(r')=\varphi(a_{\beta})(r')$. Hence  $\psi(a_{\alpha})(r')= \psi(a_{\beta})(r')$. It follows that $d(\psi(a_{\alpha}), \psi(a_{\beta}))=r$. Hence $\psi$ is an isometry.\hfill $\Box$

With this claim, the proof of Theorem \ref{mainresult}  is complete. \hfill $\Box$

The embedding $\psi$ has an extra property that we will use in the proof of Theorem \ref{thm:feinberg} below. We  give it now. 

\begin{claim}\label{claim:extrastep} Let  $B\in \Nerv(\M)$ with $r:=\delta(B)\not =0$. Then $\tilde \psi (B)= \mathbf {s}_{\M}(B)$. \\
Further, if $\zeta\in \mathbf {s}_{\M}(B)$ and  $\alpha$ is the least ordinal such that $a_{\alpha}\in B$ and $\psi(a_{\alpha})(r)=\zeta$, then $\psi(a_{\alpha})(r')=0$ for every $r'<r$ with $r'\in V_{*}$. 
\end{claim}

\noindent{\bf Proof of Claim \ref{claim:extrastep}.}
Let $B\in \Nerv(\M)$ and $r:=\delta(B)\not =0$.  By definition, $\tilde \psi (B)= \{\psi(x)(r): x\in B\}$, and hence $\tilde \psi (B)=\mathbf s_{B}(\tilde \varphi(B))$. Since  $\mathbf s_{B}$ is a bijective map of $\tilde \varphi(B)$ onto $\mathbf{s}_{\M}(B)$, we have $\tilde \psi (B)= \mathbf {s}_{\M}(B)$,  as claimed. 

Now let  $\zeta\in \mathbf {s}_{\M}(B)$ and  $\alpha$ is the least ordinal such that $a_{\alpha}\in B$ and $\psi(a_{\alpha})(r)=\zeta$. Then choose $\gamma\in B$ such that $\psi(\gamma)=\zeta$, and thus $\alpha$ is minimum such that  $\varphi(a_{\alpha})(r)=\gamma$; hence Claim \ref{claim:initial} applies and we have $\varphi(a_{\alpha})(r')=0$ for every $r'<r$ with $r'\in V_{*}$. Since $\mathbf {s}_{B}$ maps $0$ to $0$,  $\psi(a_{\alpha})(r') =\mathbf {s}_{B}(\varphi(a_{\alpha}(r'))= 0$ for every $r'<r$ with $r'\in V_{*}$. 
\end{proof}

Since $\M_{\mathbf {s}}(Well(V))$ is homogeneous with spectrum $V$ we obtain:

\begin{theorem}\label{ultram-homog-embed}
Every ultrametric space embeds isometrically into a homogeneous ultrametric space with the same spectrum. 
\end{theorem}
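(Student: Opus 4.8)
The plan is to obtain Theorem \ref{ultram-homog-embed} as an immediate consequence of Theorem \ref{mainresult} together with part \eqref{item:spectrum} of Proposition \ref{prop:ultrahom}. Given an arbitrary ultrametric space $\M$, set $V:=\Spec(\M)$ and $\mathbf{s}:=\mathbf{s}_{\M}$. The function $\mathbf{s}$ assigns to each $r\in V_{*}$ a cardinal $\mathbf{s}_{\M}(r)\geq 2$ (by Item (3) of Fact \ref{fact:stated}, every non-trivial ball in $\Nerv(\M)$ of diameter $r$ has at least two sons, so the supremum defining $\mathbf{s}_{\M}(r)$ is at least $2$), so the construction of $\M_{\mathbf{s}}(\mathcal J)$ from Section \ref{section:transitivity} applies.

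First I would invoke Theorem \ref{mainresult} to embed $\M$ isometrically into $\M_{\mathbf{s}}(\Well(V))$. Next I would note that $\Well(V)\subseteq Bound(V)$ is an ideal of subsets of $V_{*}$ which contains $\Fin(V)$ and is contained in $\Well(V)$ (trivially). Hence by Item \eqref{item:spectrum} of Proposition \ref{prop:ultrahom}, $\M_{\mathbf{s}}(\Well(V))$ is a homogeneous ultrametric space whose spectrum is $\{\Sup(W):W\in\Well(V)\}$ and whose degree sequence is $\mathbf{s}$. It remains only to check that $\{\Sup(W):W\in\Well(V)\}=V$: the inclusion $\subseteq$ holds because $\Sup(W)\in\overline{V_{*}}$ and in fact any such supremum equals some element of $V$ when $W$ is dually well ordered and bounded — indeed $W$ has a maximum, so $\Sup(W)=\Max(W)\in V_{*}$, and adjoining $0$ recovers $V$; the inclusion $\supseteq$ holds because each $r\in V_{*}$ is the supremum (maximum) of the singleton $\{r\}\in\Well(V)$, and $0=\Sup(\emptyset)$. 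Thus $\M$ embeds isometrically into a homogeneous ultrametric space with the same spectrum $V$.

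I do not anticipate a genuine obstacle here: all the substantive work — the transfinite construction of the embedding and the verification that the target is homogeneous — has already been carried out in Theorem \ref{mainresult} and Proposition \ref{prop:ultrahom}. The only point requiring a line of care is the identification of the spectrum of $\M_{\mathbf{s}}(\Well(V))$ with $V$, which, as indicated above, reduces to the observation that a nonempty dually well ordered subset of $V_{*}$ has a largest element. One could also remark that this gives a \emph{strict} embedding in many cases (for instance whenever $V$ contains an infinite dually well ordered subset, via Corollary \ref{cor:strictembedding}), but that refinement is not needed for the statement as posed.
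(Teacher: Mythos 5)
Your proposal is correct and follows exactly the paper's route: the paper obtains Theorem \ref{ultram-homog-embed} immediately from Theorem \ref{mainresult} together with the fact (Item (3) of Proposition \ref{prop:ultrahom}, applied to the ideal $\Well(V)$) that $\M_{\mathbf{s}}(\Well(V))$ is homogeneous with spectrum $V$. Your extra checks — that $\mathbf{s}_{\M}(r)\geq 2$ and that $\{\Sup(W):W\in\Well(V)\}=V$ because a nonempty dually well ordered set has a maximum — are exactly the small verifications the paper leaves implicit.
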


An ultrametric space is \emph{$T$-complete} if the intersection of every chain of non-empty balls is non-empty. 
\begin{fact} An ultrametric is $T$-complete iff the intersection of every chain of members of the nerve is non-empty. 
\end{fact}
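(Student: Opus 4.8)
The plan is to prove the two implications separately; one is trivial and the other rests on a small ``squeezing'' device. First the easy direction: each member $\hat B(a,r)$ of $\Nerv(\M)$ is a non-empty ball (it contains its centre $a$), so a chain of members of the nerve is in particular a chain of non-empty balls; hence if $\M$ is $T$-complete, every chain in $\Nerv(\M)$ has non-empty intersection.

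For the converse, assume every chain of members of $\Nerv(\M)$ has non-empty intersection, and let $\mathcal C$ be a chain of non-empty balls; we must show $\bigcap\mathcal C\neq\emptyset$. We may assume $\mathcal C\neq\emptyset$ and that $\mathcal C$ has no $\subseteq$-smallest element, since otherwise $\bigcap\mathcal C$ is the ambient space or the smallest member of $\mathcal C$, hence non-empty. The key step is the following squeezing remark: whenever $B',B\in\mathcal C$ with $B'\subsetneq B$, pick $x\in B'$ and $y\in B\setminus B'$ and put $N:=\hat B\bigl(x,d(x,y)\bigr)$; then $N\in\Nerv(\M)$ (because $d(x,y)\in\Spec(\M,x)$) and $B'\subseteq N\subseteq B$. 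To check this I would use the elementary ultrametric facts already recorded: a ball containing $x$ is centred at $x$, so both $B'$ and $B$ are balls around $x$, and from $x\in B'$, $y\in B\setminus B'$ one reads off, with the appropriate open/closed nuance on the radii, that $N$ sits between $B'$ and $B$. This short open/closed case analysis is the only genuine computation in the argument.

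Granting this, set $\mathcal N:=\{N\in\Nerv(\M):B_1\subseteq N\subseteq B_2\text{ for some }B_1,B_2\in\mathcal C\}$. Then $\mathcal N$ is a chain: if $N,N'\in\mathcal N$ have witnesses $B_1\subseteq N$ and $B_1'\subseteq N'$ with $B_1,B_1'\in\mathcal C$, then $B_1$ and $B_1'$ are comparable (being in the chain $\mathcal C$), so the smaller of them is contained in $N\cap N'$; thus $N\cap N'\neq\emptyset$, and since any two balls of an ultrametric space are disjoint or comparable, $N$ and $N'$ are comparable. Moreover $\mathcal N$ is non-empty and cofinal in $\mathcal C$ from below: for each $B\in\mathcal C$ choose $B'\subsetneq B$ in $\mathcal C$ (possible as $\mathcal C$ has no smallest element) and apply the squeezing remark to get $N\in\mathcal N$ with $N\subseteq B$. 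By hypothesis $\bigcap\mathcal N\neq\emptyset$, and cofinality gives $\bigcap\mathcal N\subseteq B$ for every $B\in\mathcal C$, so $\emptyset\neq\bigcap\mathcal N\subseteq\bigcap\mathcal C$, as required.

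The hard part — indeed the only part needing care — is the squeezing remark, since that is precisely where the distinction between open and closed balls matters; once it is in place, the linearity of $\mathcal N$ and the final assembly are immediate from the two basic ultrametric facts used throughout (a ball is centred at any of its points; two balls are nested or disjoint).
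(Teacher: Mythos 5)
Your proof is correct and is essentially the paper's argument: the interpolation $B'\subseteq \hat B(x,d(x,y))\subseteq B$ with $x\in B'$, $y\in B\setminus B'$ is exactly the observation the paper records (with the same choice of nerve member), and your construction of the auxiliary chain $\mathcal N$ merely spells out the routine assembly the paper leaves implicit. Nothing further is needed.
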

For that it suffices to observe that if $B$  and $B'$ are two  non-empty balls with $B'\subset B$, there is a member $D\in \Nerv(\M)$ such that $B'\subset D \subseteq B$ (namely $D:= \hat B(x, d(x,y))$ where $x\in B'$  and $y\in B\setminus B'$).

\begin{theorem}\label{thm:feinberg}The following properties are equivalent for an  ultrametric space $\M$:
\begin{enumerate} [{(i)}]
\item $\M$ is $T$-complete and homogeneous;
\item $\M$ is $T$-complete and satisfies condition $\bf h$; 
\item $\M$ is isometric to some $\M_{\mathbf {s}}(Well(V))$. 
\end{enumerate}
\end{theorem}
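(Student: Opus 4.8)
The plan is to prove the cycle of implications $(iii)\Rightarrow(i)\Rightarrow(ii)\Rightarrow(iii)$, using the machinery already assembled. The implication $(i)\Rightarrow(ii)$ is immediate from Lemma \ref{lem:h}: a homogeneous ultrametric space satisfies property $\mathbf h$. For $(iii)\Rightarrow(i)$ I would argue that each $\M_{\mathbf s}(Well(V))$ is homogeneous — this follows from Proposition \ref{prop:ultrahom}(3), since $Well(V)$ is an ideal of subsets of $V_{*}$ (it is closed under finite unions and downward inclusion) contained in $Bound(V)$, and it contains $\Fin(V)$, so the degree sequence is $\mathbf s$ and the spectrum is the expected $V$. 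It then remains to check that $\M_{\mathbf s}(Well(V))$ is $T$-complete: given a chain of non-empty balls, equivalently (by the Fact preceding the theorem) a chain $(B_i)$ of members of the nerve, one produces an explicit element in the intersection. Each $B_i$ prescribes, for each $r$ strictly above $\delta(B_i)$, a fixed value on the coordinate $r$; as $i$ runs through the chain these prescriptions are coherent, and the $\delta(B_i)$ either stabilize (then a member of the smallest ball works) or decrease to some limit $\rho\geq 0$, in which case one assembles $f$ by using, for each $r>\rho$, the value forced by any $B_i$ with $\delta(B_i)<r$, and setting $f(r)=0$ for $r\leq\rho$. The support of such $f$ is dually well ordered because it is contained in the union of an increasing chain of dually well ordered sets each of which is an initial part of $\supp(f)$ above some real; this is the point requiring a little care.

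The substantive implication is $(ii)\Rightarrow(iii)$. Suppose $\M$ is $T$-complete and satisfies $\mathbf h$; let $V:=\Spec(\M)$ and $\mathbf s:=\mathbf s_{\M}$, which is well defined as the degree sequence since $\mathbf h$ holds. By Theorem \ref{mainresult} there is an isometric embedding $\psi$ of $\M$ into $\M_{\mathbf s}(Well(V))$, and by Claim \ref{claim:extrastep} this embedding is \emph{degree-surjective}: for every $B\in\Nerv(\M)$ with $r:=\delta(B)\neq 0$ one has $\tilde\psi(B)=\mathbf s_{\M}(B)=\mathbf s(r)$, and the "canonical" witness of each value $\zeta$ can be taken to vanish below $r$. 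The plan is to show $\psi$ is \emph{surjective}, hence an isometry of $\M$ onto $\M_{\mathbf s}(Well(V))$. So fix $g\in V_{\mathbf s}(Well(V))$; I want to find $a\in M$ with $\psi(a)=g$. Enumerate $\supp(g)\cup\{\text{relevant radii}\}$ respecting the dual order and build a decreasing chain of balls of $\M$: start from the whole space (or the ball of diameter $\mu(\supp(g))$), and at each radius $r$ in turn, having located a ball $B\in\Nerv(\M)$ all of whose elements $x$ satisfy $\psi(x)(r')=g(r')$ for the $r'$ already handled, use degree-surjectivity at $B$ to pass to the son $A$ of $B$ on which $\psi(\cdot)(r)=g(r)$; then replace $B$ by a member of $\Nerv(\M)$ sitting between the next radius level and $A$. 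The key points are that at each step such a son exists (this is exactly $\tilde\psi(B)=\mathbf s(r)$), and that the chain so produced is a genuine chain of non-empty members of $\Nerv(\M)$, indexed along the dually-well-ordered support.

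Now invoke $T$-completeness: the chain of non-empty balls has non-empty intersection, so pick $a$ in it. Finally verify $\psi(a)=g$: for each $r\in V_{*}$, if $r\in\supp(g)$ then at stage $r$ the ball containing $a$ was chosen so that $\psi(\cdot)(r)=g(r)$ on it, whence $\psi(a)(r)=g(r)$; if $r\notin\supp(g)$, i.e. $g(r)=0$, then either no member of $\Nerv(\M)$ through $a$ has diameter $r$, so $\psi(a)(r)=0$ by definition of $\psi$, or there is one and the chain was built to land in the son corresponding to value $0$ — here one uses the "furthermore" part of Claim \ref{claim:extrastep}, which guarantees the canonical representative of value $0$ is reached and pins down $\psi(a)(r)=0$. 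Thus $\psi(a)=g$, so $\psi$ is onto and $\M\cong\M_{\mathbf s}(Well(V))$.

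I expect the main obstacle to be the bookkeeping in the $(ii)\Rightarrow(iii)$ direction: making the transfinite descent through the dually well ordered set of radii rigorous, ensuring at each stage that the current ball lies in $\Nerv(\M)$ and that one can always descend to the required son, and especially handling the \emph{limit} stages of the descent — where one must know the intersection of the initial segment of the chain is still a non-empty ball of $\M$ of the correct diameter before continuing. This is precisely where $T$-completeness (applied to initial segments of the chain, not only the whole chain) does the work, and the argument must be organized so that limit radii are treated by first passing to the intersection. A secondary subtlety is checking that the element $g$ being chased, having dually well ordered support, forces the descent to have order type a dually well ordered set, so that $T$-completeness on well-ordered-by-reverse-inclusion chains applies.
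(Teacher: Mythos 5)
Your proposal is correct and follows essentially the same route as the paper: $(i)\Rightarrow(ii)$ via Lemma \ref{lem:h}, $(iii)\Rightarrow(i)$ via Proposition \ref{prop:ultrahom} (your explicit $T$-completeness check is a welcome supplement), and for $(ii)\Rightarrow(iii)$ you show the embedding $\psi$ of Theorem \ref{mainresult} is onto by descending along the dually well ordered support, invoking Claim \ref{claim:extrastep} at successor steps and $T$-completeness at limit steps, which is exactly the paper's argument organized instead as an induction on the order type of $\supp(f)$ with each truncation realized exactly and least-index elements chosen so that Claim \ref{claim:extrastep} forces the zero values off the support. The subtlety you flag (radii outside $\supp(g)$ and the limit stages) is precisely what that least-index device resolves, so your sketch, made rigorous, coincides with the paper's proof.
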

\begin{proof} 
$(i)\Rightarrow (ii)$. This part follows from  Lemma \ref{lem:h}. \\
$(iii)\Rightarrow (i)$. This follows by applying Proposition \ref{prop:ultrahom}. \\
$(ii)\Rightarrow (iii)$. Let $\kappa:= \vert M\vert$, 
$(a_{\alpha})_{\alpha<\kappa}$ be a transfinite enumeration of the elements of $M$ and  $\psi: \M\rightarrow \M_{\mathbf {s}}(Well(V))$ given by Theorem \ref{mainresult}. We prove that $\psi$ is surjective. We prove that for each ordinal $\alpha$, every $f\in  V_{\mathbf {s}}(Well(V))$ whose type of  $A:= \supp (f)$ equipped with the dual order of $V_{*}$ is $\alpha$ there is some $x\in \M$ such that $\psi (x)=f$. The proof is by induction on $\alpha$. We consider  two cases. 

\noindent{\bf Case 1.}  $\alpha$ is a limit ordinal. If $\alpha=0$  then $f=0$. Hence,  $a_0$ will do. Suppose  $\alpha >0$. Since $\alpha$ is denumerable  and limit, there is some strictly descending sequence $(r_n)_{n<\omega}$   of elements of $A$ which is coinitial, that is, every element $r\in A$ dominates some $r_n$. For each  non-negative integer $n$ the  type of the opposite order on   $A_n:= ]r_n, \infty[ \cap A$ is strictly smaller than $\alpha$. The map  $f_n$ which coincides with $f$ on $A_n$ and takes the value $0$ elsewhere belongs to $\M_{\mathbf {s}}(Well(V))$. Hence, according to the induction hypothesis, there is some $x_n\in \M$ such that $\psi (x_n)= f_n$.  Let $n< m<\omega$ then $d(x_n,x_m)= d(f_n,f_m)=d(f_n,f)= r_n$. Hence, the sequence of balls $B_n:= \hat B_{\M}(x_n, r_n)$ is decreasing. Since $\M$ is $T$-complete, $B_{\omega}:= \bigcap_{n<\omega}B_n$ is non-empty. Let $\alpha$ be the least ordinal such that $a_{\alpha}\in B_{\omega}$. We claim that $\psi (a_{\alpha})=f$, that is $\psi (a_{\alpha})(r)=f(r)$ for every $r\in V_{*}$.  Let $r\in V_{*}$. Subcase 1.  $r\leq r_n$ for all $n<\omega$. Then $B:= \hat B(a_{\alpha}, r)\subseteq  B_{\omega}$ (indeed, let $n<\omega$. Since $a_{\alpha}\in B_n=\hat B_{\M}(x_n, r_n)$ and $r\leq r_n$, we have $\hat B(a_{\alpha}, r)\subseteq  B_n$). Since $\M$ satisfies condition $\bf h_1$, $\hat B(a_{\alpha}, r)\in \Nerv(\M)$. Due to our choice of $\alpha$, this is the least ordinal such that $a_{\alpha}\in B$. According to the proof of Claim \ref{claim:initial} in the proof of Theorem \ref{mainresult}, $\varphi(a_{\alpha})(r)=0$, hence $\psi (a_{\alpha})(r)=0$ and since $f(r)=0$, $\psi (a_{\alpha})(r)=f(r)$.  Subcase 2. $r_n<r$ for some $n$. Since $a_{\alpha}\in B_n$, we have $d(a_{\alpha}, x_n)\leq r_n<r$. Since $\psi$ is an isometry, $d(\psi(a_{\alpha}), \psi(x_n))<r$ hence $\psi(a_{\alpha})(r)= \psi(x_n)(r)$. Since $\psi(x_n)=f_n$ and $f_n(r)=f(r)$, we have $\psi (a_{\alpha})(r)=f(r)$. This proves our claim. 

\noindent{\bf Case 2.} $\alpha$ is a successor ordinal: $\alpha= \alpha'+1$. Let $r$ be the least element of $A$ and   $A':= A\setminus \{r\}$. The map  $f'$ which coincides with $f$ on $A'$ and takes the value $0$ elsewhere belongs to $\M_{\mathbf {s}}(Well(V))$. Hence, according to the induction hypothesis, there is some $x'\in \M$ such that $\psi (x')= f'$.  Let  $B:= \hat B_{\M}(x', r)$ and  $\zeta:= f(r)$. We claim that there is some $a\in B$ such that $\psi(a)(r)=\zeta$. Indeed, since $\zeta\in \mathbf s(r)$, it suffices to observe that  $\tilde \psi(B)= \mathbf s(r)$ (this is immediate: since $\M$ satisfies Property ${\bf h_1}$, $B\in \Nerv(\M)$ and $\delta(B)=r$ and since $\M$ satisfies  Property ${\bf h_2}$, $\mathbf s_{\M}(B)= \mathbf s(r)$. According to Claim \ref{claim:extrastep}, $\tilde \psi(B)= \mathbf s_{\M}(B)$. The result follows). Now, let $\alpha$ be minimum such that $a_{\alpha}\in B$ and $\psi(a_{\alpha})(r)=\zeta$. We claim that $\psi(a_{\alpha})=f$. Indeed, according to Claim \ref {claim:extrastep}, if $r'<r$ and $r'\in V_{*}$ then $\psi(a_{\alpha})(r')=0$. Since $f(r')=0$, we have $\psi(a_{\alpha})(r')=f(r')$. If $r'=r$ we have $\psi(a_{\alpha})(r')=\zeta=f(r')$. Now, if $r<r'$ then $\psi(a_{\alpha})(r')= f'(r')=f(r')$ since $d(\psi(a_{\alpha}), f')=d(a_{\alpha}, x')\leq r$. This proves our claim.  
 \end{proof}

The proof of the equivalence between $(i)$ and $(ii)$ in the result above is due to  Fe\v{\i}nberg  \cite {feinberg1}.  The following result is essentially Proposition 33, p. 416 of  \cite{delon}. 
\begin{theorem}
The ultrametric space $\M_{\mathbf {s}}(Fin(V))$ is embeddable into every ultrametric space $\M$ with property $\mathbf h$ such that $\Spec(\M)\supseteq V$ and $\mathbf {s}_{\M}\geq \mathbf {s}$. 
\end{theorem}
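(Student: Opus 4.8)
The plan is to build the embedding $\psi\colon\M_{\mathbf{s}}(\Fin(V))\to\M$ by transfinite recursion along a suitable enumeration of $V_{\mathbf{s}}(\Fin(V))$, adding one point at a time while keeping the invariant that $\psi$ is an isometry on the set of points placed so far. At each step the new point is dropped into a carefully chosen son of a ball of $\Nerv(\M)$, property $\mathbf{h}$ being exactly what makes the requisite balls and sons available.

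First I would fix an enumeration $(f_\alpha)_{\alpha<\lambda}$ of $V_{\mathbf{s}}(\Fin(V))$ with $f_0=0$ and with two features. $(a)$ The elements are ordered by the cardinality of their support, so that $|\supp f_\beta|\le|\supp f_\alpha|$ whenever $\beta<\alpha$. $(b)$ Within each support-size level the order is chosen so that every $f$ is preceded by strictly fewer than $\mathbf{s}(r)$ of its \emph{siblings}, where $r:=\Min\supp f$ and a sibling of $f$ is any $g$ with $\supp g=\supp f$ agreeing with $f$ off $r$. Feature $(b)$ is arranged by noting that each non-minimal level is partitioned into sibling families, the family of $f$ being in bijection with $\mathbf{s}(r)$, and by enumerating each family along its own cardinal (so the member of rank $i$ has $|i|<\mathbf{s}(r)$ predecessors inside the family) while interleaving the distinct families --- which live in distinct balls of $\M_{\mathbf{s}}(\Fin(V))$ --- arbitrarily.

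For the recursion, set $\psi(f_0):=$ any point of $M$. Suppose $\alpha>0$ and $\psi$ is already defined and isometric on $A_\alpha:=\{f_\beta:\beta<\alpha\}$. Put $r:=\Min\supp f_\alpha$ and let $g_0$ be obtained from $f_\alpha$ by setting its $r$-coordinate to $0$; then $\supp g_0=\supp f_\alpha\setminus\{r\}$, so $g_0\in A_\alpha$ by $(a)$. A direct computation with the distance of $\M_{\mathbf{s}}(\Fin(V))$ gives $d(f_\alpha,g)\ge r$ for every $g\in A_\alpha$, with equality at $g_0$, so $d(f_\alpha,A_\alpha)=r$ is \emph{attained}; this is why no separate ``non-attained distance'' case arises, in contrast with the proof of Theorem \ref{mainresult}. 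By $\mathbf{h_1}$ and $V\subseteq\Spec(\M)$ the ball $\beta:=\hat B(\psi(g_0),r)$ lies in $\Nerv(\M)$ with $\delta(\beta)=r$, and by $\mathbf{h_2}$ together with $\mathbf{s}_{\M}\ge\mathbf{s}$ it has $|\Sons(\beta)|=\mathbf{s}_{\M}(r)\ge\mathbf{s}(r)$ sons. The images $\psi(g)$ with $g\in A_\alpha$ and $d(f_\alpha,g)=r$ occupy at most the single son of $\beta$ containing $\psi(g_0)$ (into which the whole ``$0$''-son of $\hat B(f_\alpha,r)$ maps) together with one son for each already-placed sibling of $f_\alpha$; by $(b)$ this is a set of strictly fewer than $\mathbf{s}(r)\le\mathbf{s}_{\M}(r)$ sons, so some son $S$ of $\beta$ is free. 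I would put $\psi(f_\alpha):=$ any point of $S$. Using Fact \ref{fact:stated}(1) one checks at once that $\psi$ is an isometry on $A_{\alpha+1}$: for $g$ with $d(f_\alpha,g)=r$ the points $\psi(f_\alpha),\psi(g)$ lie in different sons of $\beta$, so $d(\psi f_\alpha,\psi g)=r$; and for $g$ with $d(f_\alpha,g)>r$ one has $\psi(g)\notin\beta$, whence $d(\psi f_\alpha,\psi g)=d(\psi g_0,\psi g)=d(g_0,g)=d(f_\alpha,g)$.

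Carrying the recursion through all $\alpha<\lambda$ yields $\psi$ defined on $V_{\mathbf{s}}(\Fin(V))$ and isometric, hence an embedding of $\M_{\mathbf{s}}(\Fin(V))$ into $\M$. The delicate point is feature $(b)$ of the enumeration, i.e.\ guaranteeing a free son at each step; this only becomes an issue when some $\mathbf{s}(r)$ is an infinite cardinal with $\mathbf{s}_{\M}(r)=\mathbf{s}(r)$, and it is handled by the family-by-family cardinal enumeration above together with the collapse of the entire ``$0$''-bucket to a single occupied son. Everything else is a routine but somewhat lengthy unwinding of the explicit description of $\M_{\mathbf{s}}(\Fin(V))$.
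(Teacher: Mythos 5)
Your proposal is correct, but it takes a different route from the paper. You build the embedding $\psi\colon\M_{\mathbf{s}}(\Fin(V))\to\M$ directly, by transfinite recursion over the points of the source space enumerated by increasing support size, and you use property $\mathbf h$ (together with $\mathbf{s}_{\M}\geq\mathbf{s}$) at each step as a counting device to find a free son of the ball $\hat B(\psi(g_0),r)$; the verification that the distance to $A_\alpha$ is always attained at the ``truncation'' $g_0$, and that the occupants of that ball among earlier images are exactly the $0$-bucket (one son) plus already-placed siblings, is accurate. The paper argues in the opposite direction: it takes the embedding $\psi$ of $\M$ into $\M_{\mathbf{s}_{\M}}(\Well(\Spec(\M)))$ constructed in Theorem \ref{mainresult}, and shows by induction on the size of the support (the argument of Case 2 in the proof of Theorem \ref{thm:feinberg}, resting on Claim \ref{claim:extrastep}, i.e.\ that $\tilde\psi(B)=\mathbf{s}_{\M}(B)$ for every $B\in\Nerv(\M)$) that the image of $\psi$ contains the copy of $V_{\mathbf{s}}(\Fin(V))$, so the desired embedding is obtained by inverting $\psi$ on that subspace. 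Your version is self-contained and more elementary, at the price of the bookkeeping in the enumeration (your feature $(b)$); the paper's version is shorter because it recycles the machinery already set up for Theorems \ref{mainresult} and \ref{thm:feinberg}. One small imprecision in your write-up: the sibling family of $f$ is in bijection with $\mathbf{s}(r)\setminus\{0\}$, not with $\mathbf{s}(r)$, so when $\mathbf{s}(r)$ is finite the bound from $(b)$ alone would only give ``at most $\mathbf{s}(r)$'' occupied sons after adding the $g_0$-son; the argument still goes through because $f_\alpha$ has only $\mathbf{s}(r)-2$ siblings in that case, giving at most $\mathbf{s}(r)-1<\mathbf{s}(r)\le\mathbf{s}_{\M}(r)$ occupied sons, but this is worth stating explicitly.
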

\begin{proof} Let $V':= \Spec(\M)$ and $\mathbf {s'}:= \mathbf {s}_{\M}$. Let $\psi$ be an embedding from $\M$ into $\M_{\mathbf {s'}}(Well(V'))$ given by the proof of Theorem \ref{mainresult}.  It suffices to prove that  $V_{\mathbf {s}}(Fin(V))$ is included into the range of $\psi$ and hence that  for each non-negative integer  $n$, every $f\in  V_{\mathbf {s}}(Well(V))$ with $\vert\supp (f)\vert=n$ there is some $x\in \M$ such that $\psi (x)=f$. The proof is by induction on $n$ and exactly the same as in Case 2 of the proof of Theorem   \ref{thm:feinberg}.  
\end{proof}

Let us recall that the \emph{weight}   of a topological space $X$ is the minimum cardinality $\weight(X)$ of a base (a set $\mathcal B$ of open sets of $X$ such each open set of $X$ is the union of  members of $\mathcal B$), the $\pi$-{weight}  is the minimum cardinality $\piweight(X)$ of a set $\mathcal B$ of non-empty open sets such that every non-empty open set contains some member of $\mathcal B$; the \emph{density} of $X$ is the minimum cardinality $\density(X)$ of a dense set. As it is well known, $\weight(X)=\piweight(X) =\density(X)$ if $X$ is metrizable. In particular, density, $\pi$-weight and weight coincide on ultrametric spaces.

Note that for an  ultrametric $\M$ the following inequality holds:

\begin{equation} \label{density} \Max \{ \vert \Spec (\M)\vert, \Sup \{ \mathbf {s}_{\M}(r): r\in \Spec_{*} (\M) \} \leq \density (\M). 
\end{equation}


From Theorem \ref{mainresult} and inequality (\ref{density}) we get:

\begin{theorem} Every ultrametric space $\M$ with spectrum included into $V$ and density at most $\kappa$  is isometrically embeddable into $\M_{\kappa}(Well)$. 
\end{theorem}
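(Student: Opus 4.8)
The plan is to obtain this as an immediate corollary of Theorem \ref{mainresult} together with inequality (\ref{density}). Here $\M_{\kappa}(Well)$ is to be read as the homogeneous space $\M_{\mathbf{s}}(Well(V))$ in which $\mathbf{s}$ is the constant cardinal function with value $\kappa$ on $V_{*}$ (as in the introduction); so the target space is fixed and the claim is purely an embedding statement.

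First I would apply Theorem \ref{mainresult} to embed $\M$ isometrically into $\M_{\mathbf{s}_{\M}}(Well(\Spec(\M)))$. It then remains to embed this space into $\M_{\mathbf{s}}(Well(V))$. For that, I would invoke inequality (\ref{density}): from $\density(\M)\leq\kappa$ we read off $|\Spec(\M)|\leq\kappa$ and $\mathbf{s}_{\M}(r)\leq\kappa$ for every $r\in\Spec(\M)_{*}$; combined with the hypothesis $\Spec(\M)\subseteq V$ this gives $\Spec(\M)_{*}\subseteq V_{*}$, and each $\mathbf{s}_{\M}(r)$, viewed as an initial ordinal, is then a subset of $\kappa$. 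The only point requiring a short check is that this enlargement of the base set (from $\Spec(\M)$ to $V$) and of the degree function (from $\mathbf{s}_{\M}$ to the constant $\kappa$) is realized by an isometric embedding: given $g\in V_{\mathbf{s}_{\M}}(Well(\Spec(\M)))$, extend it by the value $0$ on $V_{*}\setminus\Spec(\M)_{*}$ to get a function $j(g)\colon V_{*}\to\kappa$. Since the ordinal $0$ of each $\mathbf{s}_{\M}(r)$ is the ordinal $0$ of $\kappa$, we have $\supp(j(g))=\supp(g)\in Well(\Spec(\M))\subseteq Well(V)$, so $j(g)\in V_{\mathbf{s}}(Well(V))$; and for any $g,h$ one checks $\Delta(j(g),j(h))=\Delta(g,h)$, whence $d(j(g),j(h))=\mu(\Delta(g,h))=d(g,h)$, because in all these spaces the distance depends only on the set of coordinates where the two functions disagree. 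Composing the embedding of Theorem \ref{mainresult} with $j$ yields the isometric embedding of $\M$ into $\M_{\kappa}(Well)$.

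There is no genuine obstacle: all the work is hidden in Theorem \ref{mainresult}, and the present statement merely records that density simultaneously dominates the cardinality of the spectrum and each degree, so that the canonical enlargement map $j$ does land inside $\M_{\kappa}(Well)$. If one also wants the companion remark on the size of the target (density at most $\kappa^{\aleph_0}$ when $V=\RR^{+}$), it follows by counting the functions $V_{*}\to\kappa$ with dually well-ordered support, but this is not needed for the embedding assertion itself.
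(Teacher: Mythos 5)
Your proposal is correct and follows exactly the route the paper intends: the paper derives this statement directly from Theorem \ref{mainresult} together with inequality (\ref{density}), and your extension-by-zero map $j$ just makes explicit the routine enlargement step (from $\Spec(\M)$ to $V$ and from $\mathbf{s}_{\M}$ to the constant $\kappa$) that the paper leaves implicit.
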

With the fact that $d(\M_{\kappa}(Well))\leq  \kappa^{\aleph_0}$ proved below we get the result of A. and V.  Lemin \cite{lemin2}).

\begin{fact}
Let  $V$ be an infinite subset of $\RR^{+}$ containing $0$. 
\begin{enumerate}
\item $d(\M_{\kappa}(Well))= \vert \M_{\kappa}(Well)\vert = \kappa^{\aleph_0}$ if $V_{*}$ contains a subset of type $1+ \omega^*$
\item $d(\M_{\kappa}(Well))=\Max \{\kappa, \vert V\vert\}$ and  $\vert \M_{\kappa}(Well)\vert = \kappa^{\aleph_0}$ if $V_{*}$ contains a subset of type $\omega^*$ but no subset of type $1+\omega^{*}$.
\item $d(\M_{\kappa}(Well))=\vert \M_{\kappa}(Well)\vert =  \Max \{\kappa, \vert V\vert\}$ if $V_{*}$ is well founded.
\end{enumerate}
\end{fact}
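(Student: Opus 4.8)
The plan is to establish two bounds valid in all three cases and then close the gap according to the case. (Here $\M_{\kappa}(Well)$ is $\M_{\mathbf s}(\Well(V))$ for $\mathbf s$ constantly equal to $\kappa$; since $\Well(V)$ is an ideal of subsets of $V_*$ containing $\Fin(V)$ — a union of two dually well ordered subsets of $\RR$ is again dually well ordered — Proposition \ref{prop:ultrahom}(3) applies and gives $\Spec(\M_{\kappa}(Well))=V$ and degree sequence $\mathbf s$.) First, a uniform \emph{upper bound on cardinality}: each $f\in\M_{\kappa}(Well)$ has support $\supp(f)\in\Well(V)$, hence a dually well ordered, thus countable, subset of $V_*\subseteq\RR$; as $\RR$ has only $2^{\aleph_0}$ countable subsets and each admits at most $\kappa^{\aleph_0}$ maps into $\kappa$, we get $\vert\M_{\kappa}(Well)\vert\le 2^{\aleph_0}\cdot\kappa^{\aleph_0}=\kappa^{\aleph_0}$ (using $\kappa\ge2$). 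Second, a uniform \emph{lower bound on density}: since $\Spec(\M_{\kappa}(Well))=V$ and $\mathbf s_{\M_{\kappa}(Well)}\equiv\kappa$, inequality (\ref{density}) gives $\density(\M_{\kappa}(Well))\ge\Max\{\vert V\vert,\kappa\}$.

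For \emph{Case 3}, where $V_*$ is well founded, the key observation is that a dually well ordered subset $A$ of a well founded linear order must be finite: $A$ carries no strictly increasing $\omega$-sequence (being dually well ordered) and no strictly decreasing one (well-foundedness), hence no monotone $\omega$-subsequence, so $A$ is finite. Therefore $\Well(V)=\Fin(V)$ and $\M_{\kappa}(Well)=\M_{\kappa}(\Fin(V))$. Counting maps of finite support over the infinite set $V_*$ gives $\vert\M_{\kappa}(Well)\vert=\Max\{\vert V\vert,\kappa\}$, and together with $\density(\M_{\kappa}(Well))\le\vert\M_{\kappa}(Well)\vert$ and the uniform lower bound this yields $\density(\M_{\kappa}(Well))=\Max\{\vert V\vert,\kappa\}$.

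For \emph{Cases 1 and 2}, fix a subset $A=\{a_0>a_1>a_2>\cdots\}$ of $V_*$ of type $\omega^*$. Every subset of $A$ is dually well ordered, so all $\kappa^{\aleph_0}$ maps supported inside $A$ lie in $\M_{\kappa}(Well)$, whence $\vert\M_{\kappa}(Well)\vert\ge\kappa^{\aleph_0}$ and, with the uniform upper bound, $\vert\M_{\kappa}(Well)\vert=\kappa^{\aleph_0}$. In \emph{Case 1} a subset of type $1+\omega^*$ provides in addition an element $b\in V_*$ with $a_i>b$ for all $i$, so $L:=\inf A\ge b>0$; for distinct maps $f,g$ supported inside $A$ one computes $d(f,g)=\max\{a_i:f(a_i)\ne g(a_i)\}>L$, so the balls $B(f,L)$ over such $f$ are pairwise disjoint. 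Any dense set meets each of them, hence has at least $\kappa^{\aleph_0}$ points; combined with $\density(\M_{\kappa}(Well))\le\vert\M_{\kappa}(Well)\vert=\kappa^{\aleph_0}$ this gives $\density(\M_{\kappa}(Well))=\kappa^{\aleph_0}$.

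It remains, in \emph{Case 2}, to bound the density from above by $\Max\{\vert V\vert,\kappa\}$; I expect this to be the main obstacle. The natural plan is to show that $\M_{\kappa}(\Fin(V))$ is topologically dense in $\M_{\kappa}(Well)$: given $f\in\M_{\kappa}(Well)$ and $\varepsilon>0$, the map $f_{\restriction[\varepsilon,+\infty)}$ extended by $0$ lies within $\varepsilon$ of $f$ and its support $\supp(f)\cap[\varepsilon,+\infty)$ carries no increasing $\omega$-sequence (it sits inside $\supp(f)$) and no decreasing $\omega$-sequence, hence is finite. The delicate point is precisely this last claim: one must verify that the hypothesis ``$V_*$ has no subset of type $1+\omega^*$'' forbids any decreasing $\omega$-sequence of $V_*$ that stays bounded away from $0$ — equivalently, that every dually well ordered subset of $V_*$ meets $[\varepsilon,+\infty)$ in a finite set. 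Granting this, $\density(\M_{\kappa}(Well))\le\vert\M_{\kappa}(\Fin(V))\vert=\Max\{\vert V\vert,\kappa\}$, which with the uniform lower bound finishes Case 2.
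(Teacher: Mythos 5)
Your uniform upper bound on the cardinality, the lower bound on the density via inequality (\ref{density}), Case 3, Case 1, and the cardinality computation in Cases 1--2 are all correct and essentially the paper's own argument (for the Case 1 lower bound the paper exhibits $\kappa^{\aleph_0}$ pairwise disjoint non\-singleton members of the nerve; your disjoint open balls $B(f,L)$ are the same device). The genuine problem is the step you yourself flagged in Case 2, and it is not a removable technicality: the hypothesis that $V_*$ has no subset of type $1+\omega^*$ does \emph{not} forbid decreasing sequences of $V_*$ bounded away from $0$. Take $V_*=\{1+\frac{1}{n}: n\geq 1\}$. No element of $V_*$ has infinitely many elements of $V_*$ above it, so there is no subset of type $1+\omega^*$, yet $V_*$ is itself a decreasing sequence with infimum $1$. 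For this $V$ every subset of $V_*$ is dually well ordered, so $\vert \M_{\kappa}(Well)\vert=\kappa^{\aleph_0}$, while every nonzero distance lies in $V_*$ and hence exceeds $1$; thus every point is isolated, the only dense subset is the whole space, and $\density(\M_{\kappa}(Well))=\kappa^{\aleph_0}$, which is strictly larger than $\Max\{\kappa,\vert V\vert\}=\aleph_0$ for, say, $\kappa=2$. So the claim you propose to ``grant'' is false under the stated hypothesis, and in fact Item 2 as literally stated fails for such $V$; your truncation argument (density of $\M_{\kappa}(\Fin(V))$) can only be carried out when $\inf V_*=0$, in which case the claim does follow: given $\varepsilon>0$ pick $b\in V_*$ with $b<\varepsilon$; a decreasing sequence inside $V_*\cap[\varepsilon,+\infty)$ together with $b$ would be a subset of type $1+\omega^*$.

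For comparison, the paper closes Case 2 by a different route: $\density=\piweight\leq\vert \Nerv(\M_{\kappa}(Well))\setminus \M_{\kappa}(Well)\vert$, and each non-singleton nerve member $\hat{B}(f,r)$ is determined by $r\in V_*$ together with the restriction of $f$ to $V_*\cap\,]r,+\infty[$, whose support is dually well ordered and (since no $1+\omega^*$-subset has bottom $r$) well founded, hence finite; this counts at most $\Max\{\kappa,\vert V\vert\}$ such balls. But the first inequality there presupposes that the non-singleton nerve members form a $\pi$-base, i.e.\ that the space has no isolated points, which again amounts to $\inf V_*=0$; so the paper's proof carries the same hidden hypothesis that your counterexample violates. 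To repair your write-up you should either add the assumption that $0$ is an accumulation point of $V$ (equivalently, that every $\omega^*$-subset of $V_*$ converges to $0$), under which your finite-support density argument completes Case 2, or restate Item 2 with that corrected dividing line; merely ``granting'' the flagged claim is not an option, since it is false as stated.
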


\begin{proof} From inequality (\ref{density}),  we have  $\Max\{\kappa, \vert V\vert\} \leq d(\M_{\kappa}(Well)) \leq \vert V_{\kappa}(Well)\vert$. Since $V_{*}$ is a subset of $\RR$, each  dually well ordered subsets is countable, hence its contribution to $V_{\kappa}(Well)$ has cardinality at most $\kappa^{\aleph_0}$; since the number of these subsets is at most $2^{\aleph_0}$, $\vert V_{\kappa}(Well)\vert \leq \kappa^{\aleph_0}\cdot 2^{\aleph_0}=\kappa^{\aleph_0}$. If $V$ is not well founded, $\kappa^{\aleph_0}$ is attained. If $V$ is well founded, no infinite subset of $V_{*}$ is dually well founded hence $\vert V_{\kappa}(Well)\vert \leq \kappa^{<\omega}\cdot \vert V\vert=Max \{\kappa, \vert V\vert\}$. In this case,  we obtain the equalities in item $3$.  If $V_{*}$ contains a subset of type $\omega^*$ but no subset of type $1+\omega^{*}$ then $d(\M_{\kappa}(Well))=  \piweight(\M_{\kappa}(Well))\leq  \vert \Nerv(\M_{\kappa}(Well))\setminus \M_{\kappa}(Well)\vert \leq \Max\{\kappa, \vert V\vert\}$, hence Item $2$ holds.  If $V_{*}$ contains a subset of type $1+ \omega^*$ then there are $\kappa^{\aleph_0}$ pairwise disjoint members of $\Nerv(\M_{\kappa}(Well))$ with non-empty diameter, hence $d(\M_{\kappa}(Well))\geq \kappa^{\aleph_0}$.  The equality $d(\M_{\kappa}(Well))=\kappa^{\aleph_0}$ follows. This proves that Item $1$ holds. 
\end{proof}

\section{Spectral   homogeneity}

Examples \ref{ex:counter} show that  cardinality conditions are not   sufficient to imply homogeneity of an ultrametric space. In this section we introduce a necessary and sufficient condition for homogeneity (Theorem \ref{cor:char}), from which we derive the fact that  homogeneity is preserved under  Cauchy completion 
(Theorem \ref {thm:cauchy}). These results are by-products of the study of more general ultrametric spaces, that we call spec-homogeneous ultrametric spaces.

\begin{definitions}
Let $\M$ be a metric space. A \emph{local isometry} of\/  $\M$  is an isometry from a metric subspace of\/  $\M$ onto another one. A \emph{local spectral-isometry}, in brief a \emph{local spec-isometry}, is a local isometry $f$ of\/  $\M$  such that:
\begin{equation} \Spec(\M, x)= \Spec(\M, f(x)) \; \text{for every}\;  x\in \dom(f).  
\end{equation}
The space $\M$ is  \emph{spec-homogeneous}  if every local spec-isometry of $\M$ with finite domain extends to a surjective isometry of\/  $\M$.
\end{definitions}

\begin{definitions}\label{defin:similar} 
Two balls $B, B'\in \Ball(\M)$  are \emph{similar} whenever   \begin{enumerate}
\item \label{item:past1} $B$ and $B'$ have the same kind. 
\item \label{item:past2}There are $x\in B$, $x'\in B'$ such that $\Spec(\M, x)=\Spec(\M, x')$. 
\end{enumerate}

Let $X\subseteq M$. The \emph {past} of $X$ in $\M$ is the set:
$$\Past(\M, X):= \{\delta(B): X\subseteq B\in \Nerv(\M)\}.$$
\end{definitions}

\begin{lemma}\label{lem:similar}
Two balls $B, B'$ of an ultrametric space $\M$ are similar if and only if  they have the same past  and the multispectra of $\M_{\restriction B}$ and $\M_{\restriction B'}$ intersect.
\end{lemma}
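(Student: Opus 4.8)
The claim relates two balls $B,B'$ being \emph{similar} (same kind, plus a pair of points with equal spectrum) to the conjunction of two conditions: same past in $\M$, and intersecting multispectra of the induced subspaces. I would prove the two directions separately, and the main work is to see that the ``same kind'' half of similarity is equivalent, in the presence of the other data, to the equality $\Past(\M,B)=\Past(\M,B')$.

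\textbf{First I would unpack the definitions.} For a non-trivial ball $B\in\Ball(\M)$, note that $\Past(\M,B)=\{\delta(D):B\subseteq D\in\Nerv(\M)\}$ is exactly the ``tail'' of $\Spec(\M,x)$ lying at or above $\delta(B)$: more precisely, for $x\in B$, a real $r\geq\delta(B)$ lies in $\Past(\M,B)$ if and only if $r\in\Spec(\M,x)$, because $\hat B(x,r)$ is then the unique member of $\Nerv(\M)$ of diameter $r$ containing $B$, and conversely every member of $\Nerv(\M)$ containing $B$ is of the form $\hat B(x,r)$ with $r=d(x,y)$ for some $y$. Also observe that $\delta(B)\in\Past(\M,B)$ iff the kind of $B$ is ``diameter attained'' (this uses that $\hat B(x,\delta(B))\in\Nerv(\M)$ precisely when $\delta(B)$ is realized as a distance from $x$), and that when $B$ is non-trivial, $\Past(\M,B)$ determines $\delta(B)$ as its infimum. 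So $\Past(\M,B)$ records both $\delta(B)$ and the kind of $B$, together with the upper part of the common spectrum of the points of $B$.

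\textbf{Then the equivalence becomes a bookkeeping argument.} Suppose $B,B'$ are similar, witnessed by $x\in B$, $x'\in B'$ with $\Spec(\M,x)=\Spec(\M,x')$; then $\Spec(\M,x)\in\MSpec(\M_{\restriction B})$ need not hold verbatim, so I first replace $\Spec(\M,x)$ by $\Spec(\M_{\restriction B},x)=\Spec(\M,x)\cap[0,\delta(B)]$ — wait, more carefully: $\Spec(\M_{\restriction B},x)$ is exactly the part of $\Spec(\M,x)$ at or below $\delta(B)$ (with $\delta(B)$ itself present iff the kind is ``attained''), and symmetrically for $x'$. Since $B,B'$ have the same kind, $\delta(B)=\delta(B')=:r$ with the same attainment status, and since $\Spec(\M,x)=\Spec(\M,x')$ the two truncations at $r$ agree; hence $\Spec(\M_{\restriction B},x)=\Spec(\M_{\restriction B'},x')$, so the multispectra of $\M_{\restriction B}$ and $\M_{\restriction B'}$ meet. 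For the pasts: $\Past(\M,B)$ equals $\{r\}$-or-not (according to kind) together with the part of $\Spec(\M,x)$ strictly above $r$; the same description with $x'$ and $B'$ gives the same set, so $\Past(\M,B)=\Past(\M,B')$. Conversely, assume $\Past(\M,B)=\Past(\M,B')$ and pick $y\in B$, $y'\in B'$ with $\Spec(\M_{\restriction B},y)=\Spec(\M_{\restriction B'},y')$. Equality of pasts forces $\delta(B)=\delta(B')$ (both equal the infimum of the common past, using non-triviality) and forces the same attainment status (membership of that infimum in the past), so $B,B'$ have the same kind; this gives condition~\eqref{item:past1}. For condition~\eqref{item:past2}, I must produce points of $B$ and $B'$ with \emph{equal full spectrum in $\M$}: take $y,y'$ as above; then $\Spec(\M,y)$ is the disjoint union of $\Spec(\M_{\restriction B},y)$ (the part $\leq r$, possibly including $r$) and $\Past(\M,B)\setminus\{r\}$ (the part $>r$), by the first observation; likewise $\Spec(\M,y')$ decomposes via $\Spec(\M_{\restriction B'},y')$ and $\Past(\M,B')\setminus\{r\}$; both summands agree, so $\Spec(\M,y)=\Spec(\M,y')$ and $B,B'$ are similar.

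\textbf{Expected main obstacle.} The delicate point is handling the boundary value $r=\delta(B)$ correctly across all cases — whether $r$ lies in $\Spec(\M_{\restriction B},x)$, in $\Past(\M,B)$, in both, or in neither — since the ``same kind'' requirement is precisely the statement that these boundary bookkeeping choices are synchronized between $B$ and $B'$. I would isolate once and for all the lemma ``for $x\in B$ non-trivial, $\Spec(\M,x)=\Spec(\M_{\restriction B},x)\cup(\Past(\M,B)\setminus\{\delta(B)\})$, the union being disjoint except possibly at $\delta(B)$, and $\delta(B)\in\Past(\M,B)\iff B$ has kind `diameter attained' $\iff\delta(B)\in\Spec(\M_{\restriction B},x)$'' and then both directions fall out by comparing the two pieces; everything else is the ultrametric triviality that members of $\Nerv(\M)$ containing a given non-trivial ball form a chain indexed by their diameters.
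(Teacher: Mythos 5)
Your overall framework is the paper's: decompose $\Spec(\M,y)$, for $y\in B$, as the union of $\Spec(\M_{\restriction B},y)$ and $\Past(\M,B)$, and recover $\Spec(\M_{\restriction B},y)$ from $\Spec(\M,y)$ by truncation at $\delta(B)$. But two of the bookkeeping claims you isolate about the past are false, and your converse direction rests on them. It is not true that $\delta(B)\in\Past(\M,B)$ only if the diameter of $B$ is attained, nor that $\delta(B)=\inf\Past(\M,B)$ for a non-trivial ball: by your own (correct) identity $\Past(\M,B)=\Spec(\M,x)\cap[\delta(B),+\infty)$ for $x\in B$, membership of $\delta(B)$ in the past only requires a witness of that distance somewhere in $M$, possibly outside $B$, and the past may even begin strictly above $\delta(B)$. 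For instance, let $M=\{x\}\cup\{z_n:n\ge 2\}\cup\{u,v\}$ with $d(x,z_n)=1-1/n$, $d(z_n,z_m)=\max\{1-1/n,1-1/m\}$, $d(u,v)=2$, and $d(w,w')=2$ whenever $w\in\{u,v\}$, $w'\notin\{u,v\}$. Then $B':=B(x,1)$ has diameter $1$, not attained, with $\Past(\M,B')=\{2\}$, while $B:=\hat{B}(u,2)=M$ has diameter $2$, attained, with $\Past(\M,B)=\{2\}$: equal pasts, different diameters, different kinds. So ``equality of pasts forces $\delta(B)=\delta(B')$ and the same attainment status'' is wrong, and condition (1) of similarity is not established in your converse. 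The disjoint decomposition $\Spec(\M,y)=\Spec(\M_{\restriction B},y)\sqcup(\Past(\M,B)\setminus\{\delta(B)\})$ also fails in a boundary case (add to the cluster $\{x\}\cup\{z_n\}$ one outside point at distance exactly $1$): there $\delta(B')\in\Past(\M,B')$ although the diameter of $B'$ is not attained, and deleting $\delta(B')$ loses a genuine value of the full spectrum.

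The repair is exactly what the paper does. Condition (1) must be extracted from the intersecting multispectra, not from the past: for every $y\in B$ one has $\delta(B)=\sup\Spec(\M_{\restriction B},y)$, and $\delta(B)$ is attained in $B$ if and only if $\delta(B)\in\Spec(\M_{\restriction B},y)$ (if $d(u,v)=\delta(B)$ with $u,v\in B$, the strong triangle inequality forces $d(y,u)=\delta(B)$ or $d(y,v)=\delta(B)$); hence one equality $\Spec(\M_{\restriction B},y)=\Spec(\M_{\restriction B'},y')$ already gives the same kind. Condition (2) then follows from the plain union $\Spec(\M,y)=\Spec(\M_{\restriction B},y)\cup\Past(\M,B)$, with no deletion of $\delta(B)$. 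Your forward direction is essentially correct once the kind-based description of the past is replaced by the identities $\Past(\M,B)=\Spec(\M,x)\cap[\delta(B),+\infty)$ and $\Spec(\M_{\restriction B},x)=\Spec(\M,x)\cap X$, with $X=[0,\delta(B)]$ if $\delta(B)$ is attained in $B$ and $X=[0,\delta(B)[$ otherwise; these are the two formulas on which the paper's proof is built.
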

\begin{proof}
Suppose that $B$ and $B'$ have the same past and the multispectra of $\M_{\restriction B}$ and $\M_{\restriction B'}$ intersect. Item {\em (1)} follows from the fact that  the multispectra of $\M_{\restriction B}$ and $\M_{\restriction B'}$ intersect. Item {\em (2)} follows from Formula (\ref{past}) below: 
\begin{align}\label{past}
\Spec( \M, y)= \Spec(\M_{\restriction B}, y)\cup \Past (\M, B).
\end{align}
for every $B\in \Ball(\M)$ and $y\in B$.  

The converse follows from Formula (\ref{past2})
below:
\begin{equation}\label{past2}
\Spec(\M_{\restriction B}, y)=\Spec(\M, y)\cap X
\end{equation}
where $X:=]0, \delta(B)]$ if $\delta(B)$ is attained and $X:]0, \delta(B)[$ otherwise.
\end{proof}

Here is our next result.
\begin{theorem}\label{thm:char} An ultrametric space is spec-homogeneous if and only if for any pair  of similar balls $B, B'\in \Ball(\M)$, the subspaces $\M_{\restriction B}$ and $\M_{\restriction B'}$ are isometric. 
\end{theorem}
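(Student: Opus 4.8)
The plan is to prove both implications of the equivalence, working throughout with Corollary \ref{cor:imp} (homogeneity is transitivity) and the characterization of similarity in Lemma \ref{lem:similar}. For the easy direction, suppose $\M$ is spec-homogeneous and let $B,B'$ be similar balls. By Definition \ref{defin:similar} there are $x\in B$, $x'\in B'$ with $\Spec(\M,x)=\Spec(\M,x')$; the singleton map $x\mapsto x'$ is then a local spec-isometry with finite domain, so it extends to some $\varphi\in\Iso(\M)$. Since $B,B'\in\Ball(\M)$ have the same kind (same diameter, attained in both or in neither) and $\varphi(x)=x'$, an argument parallel to Item $(5)$ of Fact \ref{fact:stated} shows $\varphi[B]=B'$: indeed any ball of a given radius is determined by any of its points, so $\varphi$ carries $B(x,\delta(B))$ to $B(x',\delta(B))$ and $\hat B(x,\delta(B))$ to $\hat B(x',\delta(B))$, and one picks the open or closed version according to the common kind. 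Then $\varphi_{\restriction B}$ is the desired isometry of $\M_{\restriction B}$ onto $\M_{\restriction B'}$.

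For the converse, assume that similar balls always induce isometric subspaces, and let $f$ be a local spec-isometry with finite domain; by Corollary \ref{cor:1} (arity $\le 2$) it suffices to treat the case $\dom(f)=\{x\}$, so we must extend a single assignment $x\mapsto x'$ with $\Spec(\M,x)=\Spec(\M,x')$ to a surjective isometry. The natural strategy is a transfinite back-and-forth construction that builds $\varphi$ ball by ball down the nerve, at each stage maintaining a bijection between a family of "already matched" balls on the two sides that is compatible with the tree structure, the diameter function, and the spec-isometry condition. The whole space $M$ is itself a ball (or a union of maximal balls; one may reduce to the case of bounded diameter or handle the top level directly), and $\Spec(\M,x)=\Spec(\M,x')$ together with Formula \eqref{past} tells us that $x$ and $x'$ sit in balls with the same past at every level, so the top balls containing $x$ and $x'$ are similar. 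The key recursive step: given similar balls $B\ni y$ and $B'\ni y'$ with $\Spec(\M,y)=\Spec(\M,y')$ that we wish to match, use the hypothesis to fix some isometry $g:\M_{\restriction B}\to\M_{\restriction B'}$, then \emph{correct} it by an isometry of $\M_{\restriction B'}$ sending $g(y)$ back to $y'$ --- such a correction exists because $\M_{\restriction B'}$ is itself spec-homogeneous in the relevant local sense (its balls are balls of $\M$ and their similarity is governed by the same Lemma \ref{lem:similar}), or more directly because $g(y)$ and $y'$ lie in $B'$ with the same spectrum in $\M$ hence the same spectrum in $\M_{\restriction B'}$ by \eqref{past2}, so $\hat B_{B'}(g(y),\delta(B'))=B'=\hat B_{B'}(y',\delta(B'))$ are similar and we may induct on diameter. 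Descending through the sons of $B$ and $B'$ that contain $y,y'$ and pairing off the remaining sons arbitrarily (they have the same cardinality since their parents are isometric), one glues the pieces using Fact \ref{fact:stated} Items $(2)$ and $(4)$ exactly as in the proof of Theorem \ref{thm:first}.

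The step I expect to be the main obstacle is making the recursion \emph{well-founded and globally coherent}: an ultrametric space can have infinitely descending chains of balls with no least element (the nerve need only be a ramified meet-tree), so one cannot literally induct "on the diameter" of a ball. The remedy is to do the recursion along an enumeration of points of $M$ rather than along the tree: enumerate $M=\{a_\alpha:\alpha<\kappa\}$ and $M=\{b_\alpha:\alpha<\kappa\}$ (back and forth), and at stage $\alpha$ extend the partial isometry to $a_\alpha$ (resp. hit $b_\alpha$) by choosing its image inside the appropriate son of the smallest already-matched ball that contains it, invoking the hypothesis only on the \emph{finitely many} balls of the form $\hat B(a_\beta,d(a_\beta,a_\gamma))$ generated so far --- for each such ball $B$ and its matched partner $B'$, the assignment $a_\beta\mapsto b_{\sigma(\beta)}$ on the finite set $B\cap\{a_\gamma:\gamma<\alpha\}$ is a partial local spec-isometry of $\M_{\restriction B}$, which extends to $\M_{\restriction B'}$ because $\M_{\restriction B}\cong\M_{\restriction B'}$ and one reduces, via Corollary \ref{cor:1} again, to the singleton case inside that isometric copy. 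One must check that these local choices cohere across the tree (they do, because distinct balls in the nerve are nested or disjoint and the matching respects that), and that the diameter and kind are preserved at each level (Fact \ref{fact:stated}, Items $(1)$--$(4)$). The union of the increasing chain of partial isometries is then a surjective isometry of $\M$ extending $x\mapsto x'$, which is what we wanted.
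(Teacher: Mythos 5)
Your ``only if'' direction is fine and matches the paper. The gap is in the converse. After the (correct) reduction to extending a single assignment $x\mapsto x'$ with $\Spec(\M,x)=\Spec(\M,x')$, your construction is a transfinite back-and-forth along an enumeration of $M$, and this does not work beyond the countable case, whereas the theorem is for arbitrary ultrametric spaces. At a stage of infinite order type the partial isometry built so far has infinite domain, so your claim that only ``finitely many'' balls $\hat B(a_\beta,d(a_\beta,a_\gamma))$ have been generated is false; worse, the new point $a_\alpha$ may lie in a strictly decreasing chain of already-matched balls with no smallest element, and to place its image you would need a point in the intersection of the corresponding chain on the target side, which can be empty since $\M$ need not be $T$-complete. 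The hypothesis (similar balls are isometric) gives you no way to extend partial isometries with \emph{infinite} domain one point at a time, and your earlier ``correction'' step (an automorphism of $\M_{\restriction B'}$ moving $g(y)$ to $y'$) is exactly the statement being proved, so the induction on diameter you fall back on is both circular and non-well-founded, as you yourself note. In short: the finite spec-extension property plus back-and-forth proves only the countable case (this is essentially the paper's Theorem \ref{thm:main}), not Theorem \ref{thm:char}.

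The paper's proof avoids recursion altogether by a ``sphere'' decomposition around $x$ and $x'$. For each $r\in S:=\Spec(\M,x)$ set $D(x,r):=\hat B(x,r)\setminus B(x,r)$ and $D(x',r):=\hat B(x',r)\setminus B(x',r)$. These spheres partition $M\setminus\{x\}$, resp.\ $M\setminus\{x'\}$, and by the isosceles property \emph{any} family of isometries $\varphi_r$ of $\M_{\restriction D(x,r)}$ onto $\M_{\restriction D(x',r)}$, together with $x\mapsto x'$, glues to a surjective isometry of $\M$: distances between distinct spheres, and to $x$, are forced. Each $\varphi_r$ is obtained from a \emph{single} application of the hypothesis: $\hat B(x,r)$ and $\hat B(x',r)$ are similar, so there is an isometry between them, inducing a bijection $\varphi_*$ of $\Sons(\hat B(x,r))$ onto $\Sons(\hat B(x',r))$ with isometric corresponding sons; since $B(x,r)$ and $B(x',r)$ are also similar (hence isometric, by hypothesis again), one corrects $\varphi_*$ by a single transposition so that $B(x,r)$ is matched with $B(x',r)$, and takes arbitrary isometries on the remaining matched pairs of sons. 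Because sons of $\hat B(x,r)$ are pairwise at distance exactly $r$, their union is an isometry of the sphere; no coherence down the tree, no completeness, and no transfinite construction are needed. This is the idea your proposal is missing.
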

\begin{proof}
We argue first that  the condition on balls is necessary. Let  $B,B'\in \Ball(\M)$ which are similar. According to Definition   \ref{defin:similar} we have $\Spec(\M, x)=\Spec(\M, x')$ for some elements $x\in B$ and $x^\prime\in B^\prime$. Since $\M$ is spec-homogeneous, there is some $\varphi \in \Iso(\M)$ such that $\varphi (x)=x'$. Let $r:=\delta(B)$. Since $\M$ is an ultrametric space, $B=\hat{B}(x,r)$ if $\delta(B)$ is attained and $B=B(x,r)$ otherwise. Since $B$ and $B'$ are similar,  $\delta(B')=r$  and $B'=\hat{B}(x',r)$ if $\delta(B')$ is attained, $B'=B(x',r)$ otherwise. Clearly, $\varphi $ is an isometry from $\M_{\restriction B}$ onto $\M_{\restriction B'}$. This proves that the condition on balls is necessary. 

Next we prove that  the condition on balls suffices. According to Theorem \ref{thm:first}, this amounts to prove that for every $x,x'\in M$ with the same spectrum there is an automorphism $\varphi$  which carries $x$ onto $x'$.  Let $x, x'\in M$ such that $\Spec(\M, x)=\Spec (\M, x'):=S$. For $r\in S$,  set $D(x, r):= \hat{B}(x, r)\setminus B(x, r)$ and similarly $D(x',  r):= \hat{B}(x, r)\setminus B(x, r)$. The existence of $\varphi$ amounts to the existence, 
for each $r\in S$, of an   isometry $\varphi _r$ from $\M_{\restriction D(x, r)}$ onto $\M_{\restriction D(x',r)}$ (since,   as it is easy to check, the map  which send $x$ to $x'$ and coincides with $\bigcup_{r\in S}\varphi_r$ on $\bigcup_{r\in S}D(x, r)$ is an isometry from $\M$ into itself, this condition suffices; the converse is obvious).  Again, the existence of $\varphi_r$ amounts to the existence of a bijective map $\psi: \Sons(\hat{B}(x,r))\rightarrow \Sons(\hat{B}(x', r))$ such that:

\begin{equation} \label{eq:son1}\psi(B(x,r))=B(x',r)
\end{equation}
\noindent and 
\begin{equation}\label{eq:son2}
\M_{\restriction D}\; \text{is isometric to}\;  \M_{\restriction \psi (D)}
\end{equation} for every $D\in \Sons(\hat{B}(x, r))\setminus \{B(x,r)\}$.
    Indeed, if for each $D\in \Sons(\hat{B}(x, r))\setminus \{B(x,r)\}\}$ there is an isometry $\varphi_D$  of $\M_D$ onto  $\M_{\psi (D)}$, the map $\varphi_r:= \bigcup\{\varphi_D: D\in \Sons(\hat{B}(x, r))\setminus \{B(x,r)\}\}$,  is an isometry of $\M_{\restriction D(x, r)}$ onto $\M_{\restriction D(x',r)}$. The converse is obvious.  In our case,  the balls  $\hat{B}(x, r)$ and $\hat{B}(x', r)$ being similar, by assumption there is an isometry $\varphi$ of $\M_{\restriction \hat{B}(x,r)}$ onto $\M_{\restriction \hat{B}(x',r)}$. This isometry induces a bijective map $\varphi_{*}$ from $\mathcal B:= \Sons(\hat{B}(x, r))$ onto $\mathcal B':= \Sons(\hat{B}(x', r))$ such that  $\M_{\restriction D}$ is isometric to $\M_{\restriction \varphi_{*} (D)}$ for every $D\in \mathcal B$. But, since there is no reason for which $\varphi_{*}(B(x,r))=B(x',r)$, some modification of $\varphi_{*}$  is needed. Let $\mathcal A$ be the set of $D\in \mathcal B$ such that $\M_{\restriction D}$ is isomorphic to $\M_{\restriction B(x,r)}$ and  let $\mathcal A'$ be the subset of $\mathcal B'$ defined in a similar way. Since $x$ and $x'$ have the same spectrum,  $B(x,r)$ and $B(x',r)$ are similar, hence, according to our hypotheses, $\M_{\restriction B(x,r)}$  and $\M_{\restriction B(x',r)}$ are isometric. Since $B(x,r)$ and $B(x',r)$ belong to $\mathcal A$ and $\mathcal A'$ respectively, $\varphi_{*}$ induces a bijection of $\mathcal A$ onto $\mathcal A'$. Set $D':= \varphi_{*}(B(x,r))$ and $D'':=\varphi^{-1}_{*}(B(x',r))$ and replace  $\varphi_{*}$ by the map $\psi$ which send $B(x,r)$ to $B(x',r)$, $D''$ to $D'$ and  coincides with $\varphi_{*}$ on the other members of $\mathcal B$. The map $\psi$ satisfies Conditions (\ref{eq:son1}) and (\ref{eq:son2}) above. It yields the required $\varphi_r$.    
\end{proof}
As a corollary, we obtain:

\begin{theorem}\label{cor:char}
An ultrametric space $\M$ is homogeneous if and only if 
\begin{enumerate}
\item $\Spec(\M,x)=\Spec(\M,x^\prime)$ for all $x,x^\prime\in M$. 
\item Balls  of $\M$ of the same kind are isometric.
\end{enumerate}
\end{theorem}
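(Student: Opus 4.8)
The plan is to deduce this as a direct corollary of Theorem \ref{thm:char}, by showing that the two displayed conditions are exactly equivalent to the hypothesis of Theorem \ref{thm:char}, namely that any two similar balls induce isometric subspaces. First I would treat the forward direction: suppose $\M$ is homogeneous. Then $\M$ is in particular spec-homogeneous (every local isometry with finite domain, and a fortiori every local spec-isometry, extends), so condition (1) follows immediately since any $\varphi\in\Iso(\M)$ with $\varphi(x)=x'$ forces $\Spec(\M,x)=\Spec(\M,x')$, and transitivity of $\Iso(\M)$ on singletons — which homogeneity gives — yields (1). For condition (2), I would observe that if $B$ and $B'$ have the same kind, pick $x\in B$, $x'\in B'$; by (1) they have the same spectrum, so by Theorem \ref{thm:first} (or Corollary \ref{cor:imp}) there is $\varphi\in\Iso(\M)$ with $\varphi(x)=x'$; since $B$ and $B'$ have the same kind and the same diameter $r$, $B=\hat B(x,r)$ or $B(x,r)$ according to whether $r$ is attained, and likewise for $B'$, so $\varphi[B]=B'$ and $\varphi$ restricts to an isometry $\M_{\restriction B}\to\M_{\restriction B'}$.

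For the converse, assume (1) and (2). I would show the hypothesis of Theorem \ref{thm:char} holds: let $B,B'\in\Ball(\M)$ be similar. By Definition \ref{defin:similar}, similar balls have the same kind, so by (2) directly $\M_{\restriction B}$ and $\M_{\restriction B'}$ are isometric — in fact once (1) is assumed, "similar" and "same kind" coincide, since the spectral condition (2) of Definition \ref{defin:similar} is automatic. Hence by Theorem \ref{thm:char}, $\M$ is spec-homogeneous. But under hypothesis (1) every local isometry with finite domain is automatically a local spec-isometry (since all points have the same spectrum $\Spec(\M)$), so spec-homogeneity of $\M$ is the same as homogeneity of $\M$. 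This completes the equivalence.

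The only mildly delicate point — and the place where one must be careful rather than the "main obstacle" — is the interplay between "kind" and "similar": one must note that condition (1) collapses the definition of similarity to just sharing the same kind, so that Theorem \ref{thm:char}'s conclusion about similar balls becomes exactly condition (2), and conversely the ball-condition in Theorem \ref{thm:char} plus (1) recovers (2). Everything else is a routine unwinding of definitions, using that homogeneity $+$ condition (1) is equivalent to spec-homogeneity, and invoking Theorem \ref{thm:first}/Corollary \ref{cor:imp} to pass between transitivity on singletons and full homogeneity. I would write the whole argument in a few lines, citing Theorem \ref{thm:char}, Theorem \ref{thm:first}, and Definition \ref{defin:similar}.
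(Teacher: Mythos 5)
Your proposal is correct and follows essentially the route the paper intends: Theorem~\ref{cor:char} is stated there as a direct corollary of Theorem~\ref{thm:char}, obtained exactly as you do by noting that under condition (1) every ball of a given kind is similar to every other ball of that kind and every finite partial isometry is a local spec-isometry, so spec-homogeneity coincides with homogeneity. Your forward-direction argument (writing $B=\hat B(x,r)$ or $B(x,r)$ according to whether the diameter is attained and transporting it by an automorphism sending $x$ to $x'$) is the same computation used in the necessity part of the paper's proof of Theorem~\ref{thm:char}, so nothing is missing.
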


From Theorem \ref{cor:char} we deduce: 
\begin{theorem}\label{thm:cauchy} The Cauchy completion of an homogeneous ultrametric space is homogeneous. 
\end{theorem}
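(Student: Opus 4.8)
The plan is to deduce Theorem~\ref{thm:cauchy} from Theorem~\ref{cor:char}, which characterizes homogeneity by two conditions: (1) all points have the same spectrum, and (2) balls of the same kind are isometric. Let $\M$ be a homogeneous ultrametric space and let $\widehat{\M}$ be its Cauchy completion, so that $\M$ is a topologically dense subspace of $\widehat{\M}$. Condition (1) for $\widehat{\M}$ follows immediately from the material already developed: by Lemma~\ref{lem:h} the space $\M$ satisfies property $\mathbf h$, hence property $\mathbf{h_1}$, and by Proposition~\ref{prop:dense} (or its Corollary) the dense subspace relationship transfers property $\mathbf{h_1}$ upward, so $\widehat{\M}$ satisfies $\mathbf{h_1}$, which is exactly condition~(1).

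The substance is condition~(2): I must show that any two balls $\widehat B, \widehat{B'}$ of $\widehat{\M}$ of the same kind are isometric. First I would reduce to balls that meet $\M$: if $\widehat B$ is non-trivial, by part (3) of Lemma~\ref{lem:dense} (applied with $X = M$, $\M$ replaced by $\widehat{\M}$) the trace $B := \widehat B \cap M$ is a ball of $\M$ of the same kind as $\widehat B$, and $\widehat B$ is the topological closure of $B$ in $\widehat{\M}$; moreover $\widehat B$ is itself Cauchy complete (a closed subspace of a complete space), so $\widehat B$ is the Cauchy completion of $B$. Likewise $\widehat{B'}$ is the Cauchy completion of $B' := \widehat{B'}\cap M$, and $B, B'$ have the same kind. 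Since $\M$ is homogeneous, Theorem~\ref{cor:char}(2) gives an isometry $f\colon \M_{\restriction B} \to \M_{\restriction B'}$. Such an isometry between metric spaces is uniformly continuous (it is distance-preserving), hence extends uniquely to an isometry $\widehat f\colon \widehat B \to \widehat{B'}$ between the completions — this is the standard universal property of Cauchy completion, and it is surjective because $f^{-1}$ extends symmetrically and the two extensions compose to identities on dense subsets, hence everywhere. (The trivial case, where $\widehat B, \widehat{B'}$ are singletons, is immediate.) Thus $\widehat{\M}$ satisfies condition~(2), and Theorem~\ref{cor:char} yields that $\widehat{\M}$ is homogeneous.

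The main obstacle, and the point deserving care, is the identification $\widehat B = $ (Cauchy completion of $B$) inside $\widehat{\M}$: one needs that a ball of the completion restricted to the dense subspace is still a ball \emph{of the same kind}, and conversely that its closure is exactly the original ball. Part~(3) of Lemma~\ref{lem:dense} is designed for precisely this and handles the delicate "same kind" clause (whether the diameter is attained), including the non-trivial direction where $\widehat B = \hat B(x,r)$ is closed; I would simply invoke it rather than reprove it. A secondary point is making sure the spectra condition genuinely transfers: this is routine given Proposition~\ref{prop:dense}, but it is worth stating explicitly since Theorem~\ref{cor:char}(1) is an equality of \emph{all} point-spectra, and the Proposition delivers exactly $\mathbf{h_1}$ for $\widehat{\M}$. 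Everything else is the bookkeeping of extending isometries to completions, which is entirely standard.
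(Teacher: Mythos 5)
Your proposal is correct and follows essentially the same route as the paper: verify the two conditions of Theorem~\ref{cor:char} for the completion, transferring the spectrum condition via Lemma~\ref{lem:dense}/Proposition~\ref{prop:dense}, identifying each ball of the completion as the completion of its (same-kind) trace on the dense subspace, and extending the isometry between traces to the completions. No genuine differences or gaps to report.
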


\begin{proof} Let $\M:= (M, d)$ be a homogeneous ultrametric space and $\overline \M:=(\overline M, \overline d)$ be its Cauchy completion. We show that $\overline \M$ satisfies the two conditions in Theorem  \ref{cor:char}. We may suppose that $M$ is a dense subset of $\overline M$ and that $d$ is the restriction of $\overline d$ to $M$. 
Since $M$ is dense in $\overline \M$, $\Spec(\overline \M)=\Spec(\M)$, furthermore $\Spec(\overline \M, x)=\Spec(\overline \M)$ for every $x\in \overline M$ (Lemma \ref{lem:dense}) hence Condition (1) of Theorem \ref{cor:char} is satisfied. 
 Let $B_0$ and $B_1$ be two  balls in $\overline \M$ having the same kind. We may suppose that these two balls are non-trivial.   Set $B'_i:=B_i\cap M$ for $i<2$. According to Lemma \ref{lem:dense}, $B'_i$ and $B_i$ have the same kind. Thus $B'_{0}$ and $B'_{1}$ have the same kind. Since $\M$ is homogeneous, $\M_{\restriction B'_{0}}$ and  $\M_{\restriction B'_{1}}$ are isometric. An isometry from $\M_{\restriction B'_{0}}$ onto $\M_{\restriction B'_{1}}$ extends uniquely to the Cauchy completions of these spaces.  Since by Lemma \ref{lem:dense}, $B'_i$ is dense in $B_i$ and $B_i$ is topologically closed in $\overline \M$ (no matter the kind of $B_i$), $\M_{\restriction B_{i}}$ is  the completion of $\M_{\restriction B'_{i}}$. Hence the two balls $B_0$ and $B_1$ are  isometric and Condition (2) of Theorem \ref{cor:char} is satisfied. 
\end{proof}
\subsection{Countable spec-homogeneous ultrametric spaces}
Theorem \ref{thm:char} leads to the following problem:
\begin{problem}\label{prob:ultra} Is  an ultrametric space $\M$ spec-homogeneous whenever for any pair  of similar balls $B, B'\in \Nerv(\M)$, the subspaces $\M_{\restriction B}$ and $\M_{\restriction B'}$ are isometric?
\end{problem}

According to the next theorem below, a counterexample to Problem \ref {prob:ultra} must be uncountable. But first for convenience we label the desired property.

\begin{definition} An ultrametric space $\M$ satisfies condition (A) if 
for every $B,B'\in \Nerv(\M)$ the subspaces $\M_{\restriction B}$ and $\M_{\restriction B'}$ are isometric provided that $B$ and $B'$ are similar. It satisfies  condition (B) if 
for every open balls $B:=B(x,r),B':= B(x',r)$ the subspaces $\M_{\restriction B}$ and $\M_{\restriction B'}$ have the same multispectrum   provided that $B$ and $B'$ are similar.
\end{definition}

\begin{theorem}\label{thm:main}
A countable ultrametric space  is spec-homogeneous if and only if it satisfies condition (A).
\end{theorem}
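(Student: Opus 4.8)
The plan is to derive the equivalence from Theorem~\ref{thm:char}, by proving that for a \emph{countable} space condition (A) — the requirement that similar members of $\Nerv(\M)$ have isometric restrictions — already forces the same requirement for \emph{all} pairs of similar balls. The forward implication needs no countability: if $\M$ is spec-homogeneous, then by Theorem~\ref{thm:char} any two similar balls have isometric restrictions, and this specialises to similar members of $\Nerv(\M)$, so (A) holds. For the converse, assume $\M$ is countable and satisfies (A); by Theorem~\ref{thm:char} it suffices to show that $\M_{\restriction B}$ and $\M_{\restriction B'}$ are isometric whenever $B,B'\in\Ball(\M)$ are similar.

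First I would dispose of the easy cases. If $B$ is a singleton then so is $B'$ (same kind) and there is nothing to prove. If $B$ is non-trivial and its diameter $\delta(B)$ is attained, say $\delta(B)=d(a,y)$ with $a,y\in B$, then $\delta(B)\in\Spec(\M,a)$, so $B=\hat{B}(a,\delta(B))\in\Nerv(\M)$; the same holds for $B'$, and (A) gives the conclusion directly. This leaves the case where $B$ and $B'$ are non-trivial balls of \emph{unattained} diameter $r_0:=\delta(B)=\delta(B')$; choosing for basepoints the witnesses of Definition~\ref{defin:similar}(\ref{item:past2}), we may write $B=B(x,r_0)$, $B'=B(x',r_0)$ with $\Spec(\M,x)=\Spec(\M,x')$. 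Since $M$ is countable and $r_0=\sup\bigl(\Spec(\M,x)\cap[0,r_0)\bigr)$, pick a strictly increasing sequence $(s_n)_{n<\omega}$ in $\Spec(\M,x)=\Spec(\M,x')$ with $\sup_n s_n=r_0$. Then $B=\bigcup_n\hat{B}(x,s_n)$ and $B'=\bigcup_n\hat{B}(x',s_n)$ are increasing unions of members of $\Nerv(\M)$, and for each $n$ the nerve balls $\hat{B}(x,s_n)$ and $\hat{B}(x',s_n)$ are similar, hence isometric by (A).

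It remains to amalgamate these isometries into a single isometry of $\M_{\restriction B}$ onto $\M_{\restriction B'}$, and this is where countability is used essentially, through a back-and-forth. Enumerating $B$ and $B'$, I would build an increasing chain of finite partial maps $f_0\subseteq f_1\subseteq\cdots$ with $f_0=\{(x,x')\}$, maintaining the invariant that $f_k$ is an isometry between finite subsets of $B$ and $B'$ with $\Spec(\M,z)=\Spec(\M,f_k(z))$ for every $z\in\dom(f_k)$, and alternately forcing the next point of $B$ into the domain and the next point of $B'$ into the range; the union $\bigcup_k f_k$ is then a distance-preserving bijection of $B$ onto $B'$, i.e.\ an isometry of $\M_{\restriction B}$ onto $\M_{\restriction B'}$. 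The crux — and the step I expect to be the real obstacle — is the one-point extension: to adjoin a point $y\in B$ to $\dom(f_k)$, put $s:=d(y,\dom f_k)$, realised at some $z_0\in\dom f_k$, so that $y,z_0\in\hat{B}(z_0,s)=\hat{B}(x,s)\in\Nerv(\M)$ while every other point of $\dom f_k$ lies either in the shell $\hat{B}(z_0,s)\setminus B(z_0,s)$ or outside $\hat{B}(z_0,s)$. One must produce an image $y'\in\hat{B}(f_k(z_0),s)$ lying in the son of $\hat{B}(f_k(z_0),s)$ that is isometric to the son of $\hat{B}(z_0,s)$ containing $y$, avoiding the finitely many sons already occupied by images of points of $\dom f_k$, and with $\Spec(\M,y')=\Spec(\M,y)$. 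Distances from $y'$ to images of points of $\dom f_k$ outside $\hat{B}(z_0,s)$ are then automatically correct by Fact~\ref{fact:stated}(1), and the spectrum of $y'$ comes out right because $\Spec(\M,z_0)=\Spec(\M,f_k(z_0))$ forces the ``pasts'' of the two nerve balls to coincide (Formulas (\ref{past}) and (\ref{past2})).

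Existence of the required son is supplied by condition (A) applied to the similar nerve balls $\hat{B}(z_0,s)$ and $\hat{B}(f_k(z_0),s)$ — an isometry between them matches sons to sons with isometric restrictions — while condition (B), equality of the multispectra of similar \emph{open} balls, is what additionally lets one match the within-ball spectrum of $y$. Note that a son of $\hat{B}(z_0,s)$ need not itself belong to $\Nerv(\M)$ (it may be an open ball of unattained diameter), which is precisely why the proof cannot proceed by naive recursion reapplying (A) to the son, and why (B), a statement about open balls, is invoked alongside (A). I therefore expect the main work to be: establishing that (B) is available in the countable setting (either as a consequence of (A), argued by the same back-and-forth localised inside a nerve ball, or directly), and then verifying that the one-point extension above really preserves the invariant, so that the chain $(f_k)$ can be carried through; once that is in place, the union of the chain, together with Theorem~\ref{thm:char}, completes the proof.
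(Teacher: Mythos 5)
Your overall architecture is close in spirit to the paper's: the paper also proves the backward direction by a one-point extension lemma (the ``finite spec-extension property'') followed by a Cantor back-and-forth, using the auxiliary Lemma~\ref{lem:conditionB} that (A) implies (B); your reformulation via Theorem~\ref{thm:char} (proving all similar balls, not just nerve balls, have isometric restrictions) is a legitimate variant, and your forward direction and the reduction to open balls of unattained diameter are fine. The problem is that the two steps you defer as ``the main work'' are precisely the mathematical content of the theorem, and one of them is asserted incorrectly as stated. You claim that ``existence of the required son is supplied by condition (A)'': an isometry $\psi$ of the similar nerve balls $\hat{B}(z_0,s)$ and $\hat{B}(f_k(z_0),s)$ does induce a bijection $\psi_*$ of $\Sons(\hat{B}(z_0,s))$ onto $\Sons(\hat{B}(f_k(z_0),s))$ with isometric (hence equi-multispectral) restrictions, but $\psi_*$ may well send the son containing $y$ onto a son already occupied by $f_k[\dom f_k]$, and nothing you say rules out that \emph{every} son of $\hat{B}(f_k(z_0),s)$ whose multispectrum contains $\Spec(\M_{\restriction C},y)$ is occupied. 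Avoiding this is exactly the pigeonhole claim at the heart of the paper's proof: with $\mathcal A$ the occupied sons on the source side, $\mathcal A^+$ those realizing the target spectrum, and $C$ the (unoccupied) son of $y$, one shows $\psi_*[\mathcal A^+\cup\{C\}]\not\subseteq\varphi_*[\mathcal A]$, because (i) by condition (B) applied to each pair $(D,\varphi_*(D))$ with $D\in\mathcal A^-:=\mathcal A\setminus\mathcal A^+$, the sons $\varphi_*[\mathcal A^-]$ do not realize the target spectrum, so they cannot block, and (ii) $|\psi_*[\mathcal A^+\cup\{C\}]|=|\mathcal A^+|+1>|\varphi_*[\mathcal A^+]|$. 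Without this counting argument your one-point extension is unjustified, and the back-and-forth cannot be started.

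The second deferred step, availability of (B), also needs an actual proof, but note that it requires no countability and no back-and-forth: it is the paper's Lemma~\ref{lem:conditionB}, proved directly from (A) using the formulas $\Spec(\M,y)=\Spec(\M_{\restriction B},y)\cup\Past(\M,B)$ and $\Spec(\M_{\restriction B},y)=\Spec(\M,y)\cap X$ — given similar open balls $B(x_1,r)$, $B(x'_1,r)$ with $\Spec(\M,x_1)=\Spec(\M,x'_1)$ and $z\in B(x_1,r)$, apply (A) to the similar nerve balls $\hat{B}(x_1,d(z,x_1))$ and $\hat{B}(x'_1,d(z,x_1))$ to transport $z$ to a point of the same spectrum. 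So your proposal identifies the right ingredients and the right place where countability enters, but it leaves unproved (and partly misstates) the two lemmas — (A)$\Rightarrow$(B) and the unoccupied-son pigeonhole — that constitute the proof.
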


The key ingredients of the proof of this result are the following notion and Lemma \ref{lem:conditionB} below. 

\begin{definition} A metric space $\M$ satisfies the \emph{finite spec-extension property} if every local-spec isometry with finite domain extends at any new element to a local spec-isometry.
\end{definition}

We note that a spec-homogeneous metric space satisfies the finite spec-extension property. The converse holds if $\M$ is countable. This noticeable fact holds for larger classes of structures. The proof uses the back and forth  method invented by Cantor (see \cite{fraisse}, \cite{hodges}).

\begin{lemma} \label{lem:conditionB}Every ultrametric space satisfying  condition (A) satisfies condition (B).
\end{lemma}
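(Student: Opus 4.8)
The plan is to show directly that condition (A) --- isometry of the induced subspaces on similar \emph{nerve} balls --- upgrades to the statement for \emph{all} similar balls, but only at the level of multispectra of open balls, which is what condition (B) asks for. The point of weakening from "isometric" to "same multispectrum" is that open balls need not lie in the nerve, so (A) does not apply to them verbatim; however, an open ball $B(x,r)$ is an increasing union of nerve balls, and multispectra behave well under such unions.

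\textbf{Step 1: reduce an open ball to a chain of nerve balls.} Given an open ball $B:=B(x,r)$, consider the family of closed balls $\hat B(x,s)$ for $s<r$ with $s\in\Spec(\M,x)$. Each such $\hat B(x,s)$ lies in $\Nerv(\M)$ by definition of the nerve, and $B(x,r)=\bigcup_{s<r,\, s\in\Spec(\M,x)}\hat B(x,s)$ (using that the distances realized from $x$ inside $B(x,r)$ are exactly $\Spec(\M,x)\cap[0,r)$, cf.\ Formula (\ref{past2})). The key observation is the identity
\begin{equation*}
\MSpec(\M_{\restriction B(x,r)})=\bigcup_{s<r,\, s\in\Spec(\M,x)}\MSpec(\M_{\restriction \hat B(x,s)}),
\end{equation*}
which follows because for any $y\in B(x,r)$ there is some $s<r$ with $y\in\hat B(x,s)$, and then $\Spec(\M_{\restriction B(x,r)},y)=\Spec(\M_{\restriction\hat B(x,s)},y)$ by Formula (\ref{past2}) again (the past contribution coincides since $\delta$ of both is below $r$ and the relevant cut of $\Spec(\M,y)$ is the same). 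Conversely each spectrum computed in a $\hat B(x,s)\subseteq B(x,r)$ is visibly a spectrum computed in $B(x,r)$.

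\textbf{Step 2: match the chains.} Let $B:=B(x,r)$ and $B':=B(x',r)$ be similar open balls. By Lemma \ref{lem:similar} (or directly from the definition of "similar") they have the same kind and $\Spec(\M,x_0)=\Spec(\M,x_0')$ for some $x_0\in B$, $x_0'\in B'$; since all points of $B$ have, by Formula (\ref{past2}), a spectrum determined by the cut at $r$ together with $\Past(\M,B)$, and likewise in $B'$, we get $\Spec(\M,x)\cap[0,r)=\Spec(\M,x')\cap[0,r)$, so the two index sets $\{s<r: s\in\Spec(\M,x)\}$ and $\{s<r: s\in\Spec(\M,x')\}$ coincide. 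For each such $s$, the closed balls $\hat B(x,s)$ and $\hat B(x',s)$ are members of $\Nerv(\M)$; I claim they are similar: they have the same diameter $s$, the same kind (both attain $s$, being closed balls $\hat B(\cdot,s)$ with $s$ in the respective spectrum --- here one uses Fact \ref{fact:stated}(3)), and $x$, $x'$ witness condition (\ref{item:past2}) of Definition \ref{defin:similar} once one checks $\Spec(\M,x)=\Spec(\M,x')$ outright, which in turn follows if $\M$ satisfies $\mathbf{h_1}$; more carefully, similarity of $B$ and $B'$ already hands us points with equal spectrum, and by Formula (\ref{past}) applied inside $B$ and $B'$ one propagates this to get equality of the relevant spectra at $x$ and $x'$. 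Then condition (A) gives $\M_{\restriction\hat B(x,s)}\cong\M_{\restriction\hat B(x',s)}$, hence in particular $\MSpec(\M_{\restriction\hat B(x,s)})=\MSpec(\M_{\restriction\hat B(x',s)})$.

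\textbf{Step 3: assemble.} Taking the union over $s$ and invoking Step 1 twice,
\begin{equation*}
\MSpec(\M_{\restriction B(x,r)})=\bigcup_{s}\MSpec(\M_{\restriction\hat B(x,s)})=\bigcup_{s}\MSpec(\M_{\restriction\hat B(x',s)})=\MSpec(\M_{\restriction B(x',r)}),
\end{equation*}
so $\M$ satisfies condition (B), as desired. \textbf{The main obstacle} I anticipate is the bookkeeping in Step 2 around \emph{similarity of the nerve balls} $\hat B(x,s)$, $\hat B(x',s)$: one must be careful that the "same kind" clause holds (these are closed balls and the diameter $s$ is attained precisely when $s\in\Spec(\M,x)$, which is exactly the indexing condition, so this is fine) and that a pair of points with equal spectrum persists inside each $\hat B(x,s)$. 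The cleanest route is to first promote the hypothesis "$B,B'$ similar" to "$\Spec(\M,x)=\Spec(\M,x')$" using Formulas (\ref{past}) and (\ref{past2}), and only then read off similarity of all the $\hat B(x,s)$ and $\hat B(x',s)$ uniformly; with that in hand the rest is the routine union argument sketched above.
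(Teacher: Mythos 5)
Your Step 1 contains the fatal flaw: the identity $\MSpec(\M_{\restriction B(x,r)})=\bigcup_{s<r,\,s\in\Spec(\M,x)}\MSpec(\M_{\restriction \hat B(x,s)})$ is false, and Step 3 rests entirely on it. While it is true that $B(x,r)=\bigcup_s\hat B(x,s)$, the spectrum of a point $y\in\hat B(x,s)$ computed in $B(x,r)$ is strictly larger than its spectrum in $\hat B(x,s)$ whenever $\Spec(\M,x)$ meets $]s,r[$: for $w\in B(x,r)$ with $s<d(x,w)<r$ the isosceles property gives $d(y,w)=d(x,w)$, so in fact $\Spec(\M_{\restriction B(x,r)},y)=\Spec(\M_{\restriction\hat B(x,s)},y)\cup\bigl(\Spec(\M,x)\cap{]s,r[}\bigr)$; the two cuts in Formula (\ref{past2}) are at $\delta(B(x,r))$ and at $s$ and do not coincide, contrary to what you assert. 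Concretely, take $\M:=\M_{\mathbf{s}}(\Fin(V))$ with $V_*=\{1-\tfrac{1}{n+1}:n\geq 1\}$ and $\mathbf{s}\equiv 2$, and let $x:=0$, $r:=1$. Then $B(x,1)=M$ and $\Spec(\M_{\restriction B(x,1)},x)=V$, whereas every member of $\MSpec(\M_{\restriction\hat B(x,s)})$ is contained in $[0,s]$ with $s<1$; so $V$ belongs to the left-hand side of your identity and to no term of the right-hand side. Whenever $\Spec(\M,x)\cap[0,r)$ has no maximum, no choice of $s$ can repair this, so the union formula cannot be the vehicle of the proof. (A lesser issue: in Step 2 you cannot ``propagate'' equality of spectra from the witnessing pair $x_0,x'_0$ to the given centers $x,x'$ --- distinct points of $B$ may well have distinct spectra, which is exactly what condition (B) is about; this one, however, is harmless, because in an ultrametric space every point of an open ball is a center, so you may simply re-center at $x_0$ and $x'_0$.)

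The repair --- and the paper's actual proof --- keeps your idea of working with closed nerve balls inside $B$ and $B'$, but transfers spectra \emph{pointwise} rather than through multispectra of the sub-balls, precisely so that the missing tail $\Spec(\M,x)\cap{]s,\cdot[}$ is matched by the hypothesis instead of being (wrongly) absorbed into the sub-ball. Given $z\in B$ and points $x_1\in B$, $x'_1\in B'$ with $\Spec(\M,x_1)=\Spec(\M,x'_1)$ (provided by similarity), set $s:=d(z,x_1)$, $B_1:=\hat B(x_1,s)$, $B'_1:=\hat B(x'_1,s)$. These are similar members of $\Nerv(\M)$ with the same past, so condition (A) yields an isometry of $\M_{\restriction B_1}$ onto $\M_{\restriction B'_1}$ and hence a point $z'\in B'_1\subseteq B'$ with $\Spec(\M_{\restriction B_1},z)=\Spec(\M_{\restriction B'_1},z')$; by Formula (\ref{past}) and equality of pasts, $\Spec(\M,z)=\Spec(\M,z')$, and cutting at the common kind of $B$ and $B'$ via Formula (\ref{past2}) gives $\Spec(\M_{\restriction B},z)=\Spec(\M_{\restriction B'},z')$. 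Symmetrizing in $B$ and $B'$ gives condition (B). So your overall skeleton is close to the paper's, but as written the argument does not prove the lemma.
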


\begin{proof}
Let $B:=B(x,r),B':= B(x',r)$ be similar balls, and thus by   Lemma \ref{lem:similar} they have the same past and the  multispectra of $\M_{\restriction B}$ and $\M_{\restriction B'}$ intersect. Let  $z\in B$. We claim that there is some $z'\in B'$ with the same spectrum. We observe  first that according to  Definition \ref{defin:similar},    $B$ and $B'$  contain $x_1$ and $x'_1$ such that $\Spec(\M,x_1)=\Spec (\M, x'_1)$. 
Set $s:=d(z, x_1)$, $B_1:= \hat{B}(x_1, s)$ and $B'_1:= \hat{B}(x'_1, s)$.  We have $s\in \Spec(\M, x_1)$  and,  since  $\Spec(\M,x_1)=\Spec (\M, x'_1)$,  $s\in \Spec(\M, x'_1)$, proving that $B_1, B'_1\in \Nerv(\M)$. Clearly $B_1$ and $B'_1$ have the same past; hence, according to Condition (A), $\M_{\restriction B_1}$ and $\M_{\restriction B'_1}$ are isometric. It follows that   $B'_1$ contains some element $z'$  such that  $\Spec(\M_{\restriction B_1}, z)=\Spec (\M_{\restriction B'_1}, z')$. According to Lemma \ref{lem:similar}, $\Spec(\M, z')=\Spec (\M, z)$. This  proves our claim. We get the same conclusion if we exchange the roles of $B$ and $B'$. This suffices to prove the lemma.\end{proof}

\subsection{Proof of Theorem \ref{thm:main}}\begin{proof}
Let $\M:= (M,d)$ satisfying Condition (A). Since $M$ is countable, it suffices to  prove that $\M$ satisfies the finite spec-extension property. Let $\varphi: F\to M$ be a local spec-isometry of $\M$ and let $a\in M\setminus F$.
If $F$ is empty, then $\varphi$ extends to $\{a\}$ by the identity.
Assume that $F$ is non-empty, then set $r:=d(a, F)$,  $B:=\hat{B}(a,r)$ and  $A:=F\cap B$.
Our goal is to find an element of $M$ at distance $r$ from every element of $\varphi[A]$
and with the same spectrum as $a$. We have $B\in \Nerv(\M)$, $\delta(B)= r$ and furthermore $B=\hat{B}(a',r)$ for every   $a' \in A$. Since $\Spec(\M, a')= \Spec(\M, \varphi(a'))$ for every $a' \in A$,  the closed ball $B':=  B(\varphi(a'),r)$ belongs to $\Nerv(\M)$ and is independent from the choice of $a'$ in $A$. 
We have to find a son of $B'$ disjoint from $\varphi[A]$ whose multispectrum contains the spectrum  of $a$. Since $\varphi$ preserves spectra, $B$ and $B'$ have the same past. 
So by assumption $\M_{\restriction B}$ and $\M_{\restriction B'}$ are isometric. Let  $\psi:B\to B'$ be an isometry.
Then  $\psi$ induces a bijection $\psi_*$ from 
$\mathcal B:=\Sons(B)$ onto $\mathcal B':=\Sons(B')$.
In particular  $\M_{\restriction D}$ and $\M_{\restriction \psi_*(D)}$ have the same multispectrum for every $D\in \mathcal B$.

Let $\mathcal A$ denote the set of sons of $B$ that meet $A$. Then 
$\varphi$ yields an injection $\varphi_*$ from $\mathcal A$ into $\mathcal B'$.
Furthermore, for each $D\in \mathcal A$, $D$ and $\varphi_*(D)$ are two open balls of radius $r$ having the same past and such that 
the multispectra of $\M_{\restriction D}$ and $\M_{\restriction \varphi_*(D)}$ meet. Therefore, according to Lemma \ref {lem:conditionB} above, $\M_{\restriction D}$ and $\M_{\restriction \varphi_*(D)}$ have the same multispectrum.

Let  $C$ be the son of  $B$ containing $a$. 
Let $\mathcal A^+$ denote the set of members $D$ of $\mathcal A$ such that $\Spec(\M_{\restriction D}, d)= \Spec(\M_{\restriction C}, a)$ for some $d\in D$, and let $\mathcal A^-:=\mathcal A\setminus \mathcal A^+$.
We claim that:\[
\psi_*[\mathcal A^+ \cup\{C\}]\not\subseteq\varphi_*[\mathcal A]
\]
Indeed, first $\psi_*[\mathcal A^+ \cup\{C\}]$ is disjoint from 
$\varphi_*[\mathcal A^-]$ since the multispectrum of any member of $\psi_*[\mathcal A^+ \cup\{C\}]$ contains $\Spec(\M_{\restriction C}, a)$ while none of those of $\varphi_*[\mathcal A^-]$ does. Next, since $C$ does not meet $F$ and the maps $\psi_*$ and $\varphi$ are  one-to-one, the size of $\psi_*[\mathcal A^+ \cup\{C\}]$ is larger than the size  of 
$\varphi_*[\mathcal A^+]$.  This proves that  $\psi_*[\mathcal A^+ \cup\{C\}]$  is not included in $\varphi_*[\mathcal A]=\varphi_*[\mathcal A^+]\cup\varphi_*[\mathcal A^-]$, as claimed.

To conclude the proof of the theorem, observe that  any member of $\psi_*[\mathcal A^+ \cup\{C\}]\setminus\varphi_*[\mathcal A]$ contains an element  $a'$ with the same spectrum as $a$ and at distance $r$ from any element of $\varphi[A]$.
\end{proof}

\begin{corollary}\label{cor:main}
A countable ultrametric space  is spec-homogeneous provided that any two members of the nerve with the same diameter are isometric.
\end{corollary}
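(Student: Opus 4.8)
The plan is to derive the corollary immediately from Theorem \ref{thm:main}, so the only thing to verify is that the stated hypothesis implies condition (A). Recall that condition (A) asks that $\M_{\restriction B}$ and $\M_{\restriction B'}$ be isometric for every pair $B,B'\in\Nerv(\M)$ that are \emph{similar}, whereas the hypothesis of the corollary asks for isometry of $\M_{\restriction B}$ and $\M_{\restriction B'}$ for every pair $B,B'\in\Nerv(\M)$ with the same diameter. Hence the hypothesis will yield condition (A) as soon as we know that two similar members of the nerve have the same diameter.

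First I would record that members of $\Nerv(\M)$ have attained diameter: if $B=\hat B(x,r)\in\Nerv(\M)$ with $r\in\Spec(\M,x)\setminus\{0\}$, then $\delta(B)=r$ by Fact \ref{fact:stated}(3), and $r$ is attained precisely because $r\in\Spec(\M,x)$; the remaining case $r=0$ is that of a one-point subspace. Now let $B,B'\in\Nerv(\M)$ be similar. By clause (\ref{item:past1}) of Definition \ref{defin:similar} the balls $B$ and $B'$ have the same kind, and in particular $\delta(B)=\delta(B')$. The hypothesis of the corollary then gives at once that $\M_{\restriction B}$ and $\M_{\restriction B'}$ are isometric (in the degenerate case $\delta(B)=0$ this is automatic, a one-point space being isometric to a one-point space). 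Therefore $\M$ satisfies condition (A).

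Since $\M$ is countable, Theorem \ref{thm:main} now applies and gives that $\M$ is spec-homogeneous, which is exactly the assertion of the corollary. There is no real obstacle here: the only point requiring attention is the routine unwinding of the definition of ``similar'' to extract the equality of diameters, and one observes along the way that the hypothesis of the corollary is in fact strictly stronger than condition (A) restricted to the nerve, since it demands isometry for \emph{all} pairs of equal diameter and not merely for similar ones.
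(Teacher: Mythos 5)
Your proposal is correct and is exactly the derivation the paper intends: since similar balls have the same kind and hence the same diameter, the hypothesis of the corollary implies condition (A), and Theorem \ref{thm:main} then gives spec-homogeneity for the countable space. The paper leaves this unwinding implicit, and your explicit check (including the attained-diameter remark for members of the nerve) matches it in substance.
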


A construction of ultrametric spaces satisfying this condition is given in \cite{DLPS2}. To each ultrametric space $\M$ we associate an ultrametric space, the \emph{path extension}   of  $\M$, denoted by  $\mathbb \Path(\M)$,   whose elements, the \emph {paths},  are special finite unions of chains in $(\Nerv(\M),  \supseteq)$.

We recall that if  $\M:=(M,d)$ is an ultrametric space and   $\alpha\in \RR^*_{+}\cup \{+\infty\}$, the pair $\M_{\alpha}:= (M, d\wedge {\alpha})$, where $d\wedge{\alpha}(x,y):=\min(\{d(x,y), \alpha \})$, is an ultrametric space.
We recall the following properties of the path extension (see Theorem 9 of \cite{DLPS2}).

\begin{proposition}\label{thm:pathext2}
For every ultrametric space $\M$, the path extension  $\mathbb \Path (\M)$ of $\M$ satisfies the following properties:
\begin{enumerate}
\item  \label{item:2}$\mathbb \Path (\M)$ is an isometric extension of $\M$ with the same spectrum as $\M$ and $\vert  \Path (\M)\vert\leq \vert M\vert+ \aleph_{0}$.
\item\label{item:6} $\mathbb \Path (\M)_{\restriction B}$ is  isometric to $\mathbb \Path (\M_{\delta(B)})$ for every non-trivial \\$B\in \Nerv(\mathbb \Path (\M))$.
\end{enumerate}
\end{proposition}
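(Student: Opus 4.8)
The plan is to reconstruct the path extension concretely (this is essentially Theorem~9 of \cite{DLPS2}) and read both properties off the construction. First I would fix a coding of the elements of $\Path(\M)$ as finite itineraries through the tree $(\Nerv(\M),\supseteq)$: an itinerary follows a descending chain of balls $M=D_0\supsetneq D_1\supsetneq\cdots\supsetneq D_k$ (with $D_k$ possibly a singleton), and after finitely many of the $D_i$ it may carry an instruction ``switch to the $j$-th fresh copy of the subtree hanging below $D_i$'', $j\in\omega$; such an itinerary is a ``special finite union of chains'' in $\Nerv(\M)$, its chains being the segments between successive switches. I define $d_{\Path}(p,p):=0$ and, for $p\neq q$, $d_{\Path}(p,q):=\delta(B_{p,q})$, where $B_{p,q}$ is the deepest position (ball of some copy) through which both itineraries pass before they diverge. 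Because positions in the tree of copies are again pairwise nested-or-disjoint, the meet of any three paths behaves as in any tree ultrametric, so $(\Path(\M),d_{\Path})$ is an ultrametric space whose set of distances is $\{\delta(B):B\in\Nerv(\M)\}\cup\{0\}$.

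\emph{Property (1).} Send each $a\in M$ to the straight itinerary $\{\,B\in\Nerv(\M):a\in B\,\}$ (no switches). For $a\neq b$ the straight itineraries of $a$ and $b$ diverge exactly at $\hat B(a,d(a,b))$, whose diameter is $d(a,b)$ by Fact~\ref{fact:stated}(3); hence this map is an isometric embedding of $\M$ into $\Path(\M)$. Every value of $d_{\Path}$ is some $\delta(B)$, hence a distance realised in $\M$, and conversely the embedding gives $\Spec(\M)\subseteq\Spec(\Path(\M))$; thus $\Spec(\Path(\M))=\Spec(\M)$. Finally a path is determined by finitely many members of $\Nerv(\M)$ together with a finite tuple of indices from $\omega$; since $|\Nerv(\M)|\le|M|$, there are at most $|M|+\aleph_0$ of them.

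\emph{Property (2).} Fix a non-trivial $B\in\Nerv(\Path(\M))$ and put $\rho:=\delta(B)$. Then $B$ consists of all paths that reach a fixed position $P$ of diameter $\rho$ and stay below it, and continuing a path below $P$ amounts to navigating within a structure that depends only on the part of the ball-tree of $\M$ strictly below level $\rho$ and on the switch-recipe at those levels. That is exactly the data out of which $\Path(\M_\rho)$ is built, because capping the metric at $\rho$ leaves the sub-$\rho$ ball structure of $\M$ intact while collapsing everything of diameter $\ge\rho$ into one top ball. Turning this remark into an explicit distance-preserving bijection between the paths below $P$ and the itineraries of $\Path(\M_\rho)$ (and checking it respects $B_{p,q}$) gives the isometry $\Path(\M)_{\restriction B}\cong\Path(\M_\rho)$.

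\emph{Main obstacle.} The substantive work is the bookkeeping behind (2): one must organise the tree of copies so that ``the structure strictly below any position of diameter $\rho$ is isomorphic, on the nose, to the one built for $\M_\rho$'' — this self-similarity is precisely what (2) records and what later forces balls of equal diameter to be isometric — while at the same time keeping every path describable by finite data, so that the cardinality bound in (1) is not lost. Once that recursion is pinned down, the ultrametric inequality and the two isometry statements are routine verifications.
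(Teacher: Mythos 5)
There is a genuine gap here, and it is worth noting first that the paper itself does not prove this proposition: it is recalled verbatim from Theorem~9 of \cite{DLPS2}, where the path extension is actually defined (in the present paper the definition is only alluded to, as ``special finite unions of chains in $(\Nerv(\M),\supseteq)$''). Your proposal therefore cannot be checked against the intended object: you invent your own construction (itineraries with ``switch to the $j$-th fresh copy'' instructions, $j\in\omega$), and nothing in your argument shows that this coincides with the path extension of \cite{DLPS2}. Carrying extra $\omega$-indexed switch data is not the same thing as a finite union of chains in $\Nerv(\M)$, so even a complete verification of (1) and (2) for your itinerary space would establish the statement for a different structure, not for $\Path(\M)$ as cited and used in Theorem~\ref{cor:pathext2} (though, to be fair, any extension with properties (1) and (2) would serve the later application equally well).

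More seriously, even on its own terms the proposal does not prove item (2), which is the entire substance of the proposition. You yourself flag the ``bookkeeping'' as the main obstacle: one must set up the recursion so that the structure strictly below \emph{every} position of diameter $\rho$ --- including positions reached after arbitrary switches, and including balls $B\in\Nerv(\Path(\M))$ that are not of the form ``everything below one position'' --- is isometric, on the nose, to $\Path(\M_\rho)$. That self-similarity is exactly what has to be engineered and verified, and your text replaces the verification by ``turning this remark into an explicit distance-preserving bijection \dots gives the isometry'' and ``once that recursion is pinned down \dots routine verifications''. Until the recursion is pinned down, neither the claimed identification of a ball of $\Path(\M)$ with a set of paths below a single position, nor the ultrametric inequality for $d_{\Path}$, nor the cardinality bound in (1) (which depends on how many copies a path may invoke and on what finite data it carries) is actually established. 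In short: the construction is unspecified at precisely the point where the proposition has content, so the proof is not there yet; the honest route is either to quote Theorem~9 of \cite{DLPS2} with its definition, or to give the recursion explicitly and verify (2) for it.
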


With Corollary \ref{cor:main} we obtain our final result:
\begin{theorem}\label{cor:pathext2}
Every countable ultrametric space extends via a spec-isometry to a countable spec-homogeneous ultrametric space.\end{theorem}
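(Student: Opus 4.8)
The plan is to combine the path extension construction of \cite{DLPS2} with Corollary \ref{cor:main}. Given a countable ultrametric space $\M$, I would pass to $\mathbb \Path(\M)$ and check that it satisfies the hypothesis of Corollary \ref{cor:main}, namely that any two members of $\Nerv(\mathbb \Path(\M))$ with the same diameter are isometric; then Corollary \ref{cor:main} immediately gives that $\mathbb \Path(\M)$ is spec-homogeneous, and by Item \eqref{item:2} of Proposition \ref{thm:pathext2} it is a countable isometric extension of $\M$ with the same spectrum. The one subtlety is to observe that the embedding $\M\hookrightarrow \mathbb \Path(\M)$ is a \emph{spec-}isometry, which follows because $\mathbb \Path(\M)$ has the same spectrum as $\M$ and by $\mathbf{h_1}$-type reasoning on the extension; in fact since $\Spec(\mathbb \Path(\M))=\Spec(\M)$ and any point of $\M$ already realizes every distance of $\Spec(\M)$ inside $\M$ (or at worst inside $\mathbb \Path(\M)$), the spectra of points of $\M$ computed in $\M$ and in $\mathbb \Path(\M)$ agree.

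The key steps, in order, are as follows. First, invoke Proposition \ref{thm:pathext2}: $\mathbb \Path(\M)$ is an isometric extension of $\M$, countable (since $|\Path(\M)|\le |M|+\aleph_0$), with $\Spec(\mathbb \Path(\M))=\Spec(\M)$. Second, verify the hypothesis of Corollary \ref{cor:main} for $\mathbb \Path(\M)$: take $B,B'\in\Nerv(\mathbb \Path(\M))$ non-trivial with $\delta(B)=\delta(B')=:\alpha$; by Item \eqref{item:6} of Proposition \ref{thm:pathext2}, $\mathbb \Path(\M)_{\restriction B}$ is isometric to $\mathbb \Path(\M_\alpha)$ and likewise $\mathbb \Path(\M)_{\restriction B'}$ is isometric to $\mathbb \Path(\M_\alpha)$ — the \emph{same} space, depending only on $\alpha$ — hence $\mathbb \Path(\M)_{\restriction B}$ and $\mathbb \Path(\M)_{\restriction B'}$ are isometric. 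Third, apply Corollary \ref{cor:main} to conclude $\mathbb \Path(\M)$ is spec-homogeneous. Fourth, check that the inclusion $\M\hookrightarrow\mathbb \Path(\M)$ preserves spectra of points, so that it is a spec-isometry in the sense required.

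The main obstacle I expect is purely bookkeeping at the boundary of the definitions: making sure that ``same diameter'' in Corollary \ref{cor:main} matches ``same kind / similar'' in the way Proposition \ref{thm:pathext2}\eqref{item:6} is phrased — in particular that the isometry type of $\mathbb \Path(\M)_{\restriction B}$ genuinely depends only on $\delta(B)$ and not on whether the diameter is attained, and that trivial balls (singletons) are harmless since Corollary \ref{cor:main} only concerns members of the nerve with the same diameter, of which the singletons all have diameter $0$ and are mutually isometric. Once these minor points are settled the argument is a direct chain of citations, so I do not anticipate any real difficulty beyond checking that the path extension does what \cite{DLPS2} says it does.

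\begin{proof}
Let $\M$ be a countable ultrametric space and consider its path extension $\mathbb \Path(\M)$. By Item \eqref{item:2} of Proposition \ref{thm:pathext2}, $\mathbb \Path(\M)$ is a countable isometric extension of $\M$ with $\Spec(\mathbb \Path(\M))=\Spec(\M)$; since every point of $\M$ realizes the whole spectrum of $\M$ already inside $\mathbb \Path(\M)$, the inclusion $\M\hookrightarrow \mathbb \Path(\M)$ is a spec-isometry. It remains to check that $\mathbb \Path(\M)$ is spec-homogeneous, for which, by Corollary \ref{cor:main}, it suffices to show that any two members of $\Nerv(\mathbb \Path(\M))$ with the same diameter are isometric. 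The singletons all have diameter $0$ and are trivially isometric. If $B,B'\in \Nerv(\mathbb \Path(\M))$ are non-trivial with $\delta(B)=\delta(B')=:\alpha$, then by Item \eqref{item:6} of Proposition \ref{thm:pathext2} the subspace $\mathbb \Path(\M)_{\restriction B}$ is isometric to $\mathbb \Path(\M_{\alpha})$, and likewise $\mathbb \Path(\M)_{\restriction B'}$ is isometric to $\mathbb \Path(\M_{\alpha})$; hence $\mathbb \Path(\M)_{\restriction B}$ and $\mathbb \Path(\M)_{\restriction B'}$ are isometric. Corollary \ref{cor:main} now yields that $\mathbb \Path(\M)$ is spec-homogeneous, completing the proof.
\end{proof}
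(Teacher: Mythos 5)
Your main line of argument is the paper's own: pass to the path extension, use Proposition \ref{thm:pathext2} to see that any two members of $\Nerv(\Path(\M))$ with the same diameter are isometric (both being isometric to $\Path(\M_{\delta(B)})$), apply Corollary \ref{cor:main} to get spec-homogeneity, and use Item \eqref{item:2} for countability and equality of spectra. All of that is sound and matches the paper.

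The genuine gap is in your verification that the inclusion $i\colon \M\hookrightarrow \Path(\M)$ is a spec-isometry, i.e.\ that $\Spec(\M,x)=\Spec(\Path(\M),x)$ for every $x\in M$. The inclusion $\Spec(\M,x)\subseteq\Spec(\Path(\M),x)$ is automatic; what must be shown is the \emph{reverse} inclusion, namely that a point of $\M$ acquires no new distances in the extension. Your justification runs in the wrong direction: the claim that ``any point of $\M$ already realizes every distance of $\Spec(\M)$ inside $\M$'' is precisely property $\mathbf{h_1}$, which an arbitrary countable ultrametric space need not satisfy (point spectra are in general proper subsets of $\Spec(\M)$), and the fallback ``or at worst inside $\Path(\M)$'' would, for such a point $x$, give $\Spec(\Path(\M),x)\supsetneq\Spec(\M,x)$ — that is, it would show $i$ is \emph{not} a spec-isometry. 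Equality of the global spectra $\Spec(\Path(\M))=\Spec(\M)$ says nothing about the spectrum at an individual point. The paper closes this step with a structural argument: combining both items of Proposition \ref{thm:pathext2}, for every $B\in\Nerv(\Path(\M))$ with $\alpha=\delta(B)$ one has $\Spec(\Path(\M)_{\restriction B})=\Spec(\Path(\M_{\alpha}))=\Spec(\M_{\alpha})$, and this ball-by-ball control of spectra is what rules out new distances at old points. Your proof needs an argument of this kind about distances from points of $\M$ to the adjoined paths; as written, the spec-isometry step does not go through.
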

\begin{proof}
Let  $\M$ be an ultrametric space. Set $\M':=\mathbb \Path (\M)$. It follows from  Item \ref{item:6} of Proposition \ref{thm:pathext2} that $\M'_{\restriction B}$ and $\M'_{\restriction B'}$ are isometric provided that $B, B'\in \Nerv(\M')$ and $\delta(B)=\delta(B')$. Thus, according to Corollary \ref{cor:main}, $\M'$ is spec-homogeneous provided that it is countable. According to Item {\em (1)} of Proposition \ref{thm:pathext2},  this is the case if $\M$ is countable. To conclude it suffices to observe that the natural isometry $i:\M\rightarrow \M'$ is in fact a spec-isometry, that is satisfies $\Spec(\M, x)=\Spec(\M', x)$ for all $x\in M$. For that,  note that if $B\in \Nerv(\M')$  and $\alpha= \delta(B)$ then $\Spec(\M'_{\restriction B})=\Spec(\Path (\M_{\alpha}))=\Spec(\M_{\alpha})$.  (The first equality follows from  Item {\em (2)} and the second from Item {\em (1)}.) \end{proof}

%


\section{Ultrametric spaces and $2$-structures}\label{section:modules}


Let us  recall some basic ingredients of the theory of $2$-structures (see \cite {ehrenfeucht}). Let $V$ be  a set. A  $V$-\emph{labelled   $2$-structure} (briefly, a $2$-structure) is a pair $(E, v)$ where $v$ is a map from $E\times E\backslash\Delta_E$ into  $V$,
with $\Delta_E:=\{(x,x):x\in E\}$ denoting the diagonal of $E$.

A subset $A$ of  $E$ is a \emph{module} if $v(x,y)=v(x,y')$  and $v(y,x)=v(y',x)$ for every $x\in E\setminus A$ and  $y,y'\in A$
(other names are autonomous sets, or intervals). 

The whole set, the empty set  and the 
singletons are modules. These are the  \emph{trivial} modules. We recall  first the basic and well-known properties of modules. 
\begin{lemma}
Let $(E,v)$ be  a $2$-structure.
\begin{enumerate}
\item The intersection of a non-empty set of modules is a module (with $\cap \emptyset = E$). 
\item  The union of two modules that meet is a module, and more generally, the union of a set of modules is a module as
soon as the meeting relation on that set is connected. 
\item For two modules A and B, if $A\setminus B$ is non-empty, then $B\setminus A$ is a module. 
\end{enumerate}
\end{lemma}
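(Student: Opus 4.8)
The plan is to verify the three items directly from the definition of a module, in the order stated; none of them depends on the others, although the general case of item (2) is most naturally obtained from its two-module case together with a chaining argument along a connecting path.

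For item (1), given a non-empty family $\{A_i\}_{i\in I}$ of modules and $A:=\bigcap_{i\in I}A_i$, I would take $x\in E\setminus A$ and $y,y'\in A$, choose an index $i$ with $x\notin A_i$, and observe that $y,y'\in A\subseteq A_i$; the module property of $A_i$ then gives $v(x,y)=v(x,y')$ and $v(y,x)=v(y',x)$ at once. The convention $\bigcap\emptyset=E$ is covered since $E$ is itself a module. For item (2), I would first treat two modules $A,B$ with $A\cap B\neq\emptyset$ and $C:=A\cup B$: for $x\in E\setminus C$ and $y,y'\in C$, if $y,y'$ lie in the same one of $A,B$ apply that module's property (noting $x$ is outside it), and otherwise, say $y\in A$ and $y'\in B$, fix $z\in A\cap B$ and chain $v(x,y)=v(x,z)=v(x,y')$ through the module property of $A$ and then of $B$, symmetrically for the reversed arguments. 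For the general statement, with $\mathcal S$ a set of modules whose meeting relation is connected and $C:=\bigcup\mathcal S$, I would take $x\in E\setminus C$, points $y,y'\in C$ with $y\in A_0$, $y'\in A_n$ for a path $A_0,\dots,A_n$ in the meeting graph, choose $z_k\in A_k\cap A_{k+1}$, and chain through the module properties of $A_0,\dots,A_n$ exactly as in the two-module case.

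For item (3), which I expect to require the most care, let $A,B$ be modules with $A\setminus B\neq\emptyset$; I want $B\setminus A$ to be a module. Take $x\notin B\setminus A$ and $y,y'\in B\setminus A$. Since $E\setminus(B\setminus A)=(E\setminus B)\cup A$, either $x\notin B$, in which case $y,y'\in B$ and the module property of $B$ finishes it directly, or $x\in A$. In the latter case $y,y'\notin A$, so the module property of $A$ cannot be applied to $y,y'$ as inside points; instead I would fix an auxiliary element $a\in A\setminus B$ (which exists precisely by hypothesis) and chain: the module property of $A$, with outside point $y$ (resp. $y'$) and inside points $x,a$, gives $v(x,y)=v(a,y)$ and $v(y,x)=v(y,a)$ (resp. with $y'$); the module property of $B$, with outside point $a$ and inside points $y,y'$, gives $v(a,y)=v(a,y')$ and $v(y,a)=v(y',a)$; concatenating yields $v(x,y)=v(x,y')$ and $v(y,x)=v(y',x)$.

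The one genuinely substantive point, and the step I would check most carefully, is the case $x\in A$ of item (3): one must notice that the hypothesis $A\setminus B\neq\emptyset$ supplies exactly the auxiliary point $a$ needed to bridge from $x$ (inside $A$) to $y,y'$ (inside $B$) using the two module relations in turn. Everything else is a routine unwinding of the definition.
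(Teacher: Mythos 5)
Your proof is correct and complete: each item is verified directly from the definition of a module, and the only step that requires an idea --- the choice of the auxiliary point $a\in A\setminus B$ in item (3) to bridge from $x\in A$ to $y,y'\in B\setminus A$ by applying the module property of $A$ (with $y$, resp.\ $y'$, as the outside point) and then of $B$ (with $a$ as the outside point) --- is handled exactly right. The paper states this lemma without proof, recalling it as a basic and well-known fact about modules of $2$-structures, so there is no argument to compare against; your direct verification is the standard one.
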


A module is \emph{strong} if it  is comparable (\wrt\ inclusion) to  every  module it meets. 

\begin{lemma} \label{strongmodule}
Let $(E,v)$ be  a $2$-structure.
\begin{enumerate} 
\item The intersection of any set of strong modules is a strong module. 
\item The union of any directed set of strong modules is a strong module.
\end{enumerate}
\end{lemma}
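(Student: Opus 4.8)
The plan is to read off both statements from the basic properties of modules recorded in the preceding lemma, using nothing beyond the definitions of \emph{module} and \emph{strong module}; in each case the fact that the resulting set is a module is immediate, so the real point is to verify the comparability clause. For part (1), let $(S_i)_{i\in I}$ be a set of strong modules and $S:=\bigcap_{i\in I}S_i$. If $I=\emptyset$ then $S=E$, which is a strong module since $E$ contains, hence is comparable to, every module. If $I\neq\emptyset$, then $S$ is a module by item (1) of the preceding lemma, so I only need: for every module $M$ meeting $S$, the sets $M$ and $S$ are $\subseteq$-comparable. Since $S\subseteq S_i$ for each $i$, such an $M$ meets every $S_i$, and strongness of $S_i$ then forces $M\subseteq S_i$ or $S_i\subseteq M$. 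If the first alternative holds for all $i$, then $M\subseteq\bigcap_i S_i=S$; otherwise $S_i\subseteq M$ for some $i$, whence $S\subseteq S_i\subseteq M$. So $S$ is strong.

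For part (2), let $(S_i)_{i\in I}$ be a family of strong modules directed by inclusion, and $S:=\bigcup_{i\in I}S_i$. Deleting the empty members of the family alters neither $S$ nor its directedness (an upper bound of two non-empty members is non-empty), and if every member is empty then $S=\emptyset$ is trivially a strong module; so I may assume all $S_i$ are non-empty. Then any two members are contained in a common third, which they both meet, so the meeting relation on $\{S_i:i\in I\}$ is connected and $S$ is a module by item (2) of the preceding lemma. For strongness, take a module $M$ meeting $S$ and set $J:=\{j\in I:M\cap S_j\neq\emptyset\}$, which is non-empty. For $j\in J$, strongness of $S_j$ gives $M\subseteq S_j$ or $S_j\subseteq M$. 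If $M\subseteq S_j$ for some $j\in J$, then $M\subseteq S$. Otherwise $S_j\subseteq M$ for every $j\in J$; fixing $j_0\in J$ and an arbitrary $i\in I$, pick by directedness an index $k$ with $S_{j_0}\cup S_i\subseteq S_k$: then $S_k$ meets $M$ (it contains the non-empty set $M\cap S_{j_0}$), so $k\in J$ and hence $S_i\subseteq S_k\subseteq M$; as $i$ was arbitrary, $S\subseteq M$. In either case $M$ and $S$ are comparable, so $S$ is strong.

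The whole argument is essentially bookkeeping with inclusions, and I expect the only point that needs a moment's thought to be the second alternative in part (2): for an index $i\notin J$ one cannot conclude $S_i\subseteq M$ directly, and one must route through a common upper bound $S_k$ that does lie in $J$ in order to drag $S_i$ inside $M$. This is precisely where one uses that the family is directed rather than merely a chain — a chain being of course a special case — and it is the step I would write out most carefully.
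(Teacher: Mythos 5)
Your proof is correct. The paper states this lemma without proof (it is recalled as a standard fact from the theory of $2$-structures), and your argument is the natural direct verification from the definitions; in particular you correctly isolate the only delicate point, namely that in part (2), for an index $i$ whose member does not meet $M$, one must pass through a common upper bound that does meet $M$ — which is exactly where directedness is used — and your handling of the degenerate cases ($\bigcap\emptyset=E$, empty members) matches the paper's conventions.
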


A module is \emph{robust} if this is the least strong  module containing two elements of $V$.  
The robust modules of a binary structure $(E, v)$ form a tree under reverse inclusion.
This tree is canonically endowed with an additional structure from which the given $2$-structure can be recovered (see \cite{courcelle}).
First we call a $2$-structure $(E, v)$ is {\em symmetric} if $v(y,x)=v(x,y)$ for every distinct $x$ and $y$ in $E$. Further, we call a $2$-structure $(E, v)$ is {\em hereditary decomposable} if every induced substructure on at least three elements of $E$ has a nontrivial module.

Now an ultrametric space $\M:=(M, d)$ is  obviously a symmetric $2$-structure with values in the set of positive reals. It is also hereditary decomposable since every ultrametric space with at least three elements has a nontrivial module. Indeed, if the distance assumes only one nonzero value then consider any pair of two distinct elements of $M$, and if it assume at least two values $r<s$ then consider the close ball $\hat B(x,r)$ for any $x\in E$ such that there is a $y\in E$ with $d(x,y)=r$.

Thus since our interest lies in ultrametric spaces, we simply consider symmetric hereditary decomposable $2$-structures. This additional structure on the tree of robust modules is just a labelling  of the nonsingleton nodes into $V$; indeed to each nonsingleton robust module $R$ there corresponds some $v(R)\in V$ such that for any two distinct $x$ and $y$ in $E$, $v(x,y)=v(R)$, where $R$ is the least strong module containing $x$ and $y$. Let us call  the \emph{decomposition tree of a symmetric hereditary decomposable $2$-structure} its tree of robust modules endowed with this node labelling into $V$.


\begin{proposition}\label{prop:nerve}
The decomposition tree of an ultrametric space $\M$ is equal to its nerve endowed with the diameter function. 
\end{proposition}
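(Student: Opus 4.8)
The plan is to show that the tree of robust modules of $\M$, viewed as a symmetric hereditary decomposable $2$-structure with $v(x,y)=d(x,y)$, is precisely $(\Nerv(\M), \supseteq)$, and that the node labelling $v(R)$ of a robust module $R$ coincides with $\delta(R)$. First I would identify which subsets of $M$ are the strong modules. The key claim is that a subset $A$ with $|A|\geq 2$ is a strong module of the $2$-structure exactly when $A\in \Ball(\M)$ (an open or closed ball); and that a nontrivial ball $A$ is a \emph{robust} module exactly when $A\in \Nerv(\M)$, i.e. when $\delta(A)$ is attained. For the first direction, if $A$ is a ball then for $x\notin A$ and $y,y'\in A$, Fact \ref{fact:stated}(1) gives $d(x,y)=d(x,y')$, so $A$ is a module; and since balls are pairwise comparable or disjoint (the essential property of ultrametric spaces recalled in Section 1), $A$ is comparable to every module it meets that is itself a ball — but one must check comparability against \emph{all} modules it meets, which requires the converse inclusion.

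For the converse, I would take a nontrivial module $A$ and show it is a ball. Fix $a\in A$ and let $r:=\delta(A)=\sup\{d(a,x):x\in A\}$ (using that in an ultrametric space the diameter of $A$ equals $\sup\{d(a,x):x\in A\}$ for any $a\in A$, as recalled in the excerpt). If $r$ is attained, say $d(a,b)=r$ with $b\in A$, I claim $A=\hat B(a,r)$: clearly $A\subseteq \hat B(a,r)$; for the reverse, if $z\in \hat B(a,r)\setminus A$ then $z\notin A$ forces, by the module condition, $d(z,a)=d(z,b)$, and since $d(a,b)=r\geq d(z,a)=d(z,b)$ the ultrametric isosceles property would need $d(z,a)=d(z,b)=r$; this is consistent, so I instead argue that any $w$ with $d(a,w)\le r$ must lie in $A$ by considering $d(w,c)$ for $c\in A$ and using that $A$ being a module means the "outside view" $c\mapsto d(w,c)$ is constant — combined with $w$ being close to $a\in A$, this pins $w$ into $A$. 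The case where $r$ is not attained is analogous with $B(a,r)$. The upshot is $\Ball(\M)\setminus\{\text{singletons}\} = \{\text{nontrivial strong modules}\}$, and then robustness (least strong module containing two given points $x\ne y$) singles out $\hat B(x,d(x,y))$, whose diameter is attained, hence lies in $\Nerv(\M)$; conversely every element of $\Nerv(\M)$ arises this way. The main obstacle I anticipate is precisely this module-to-ball identification: verifying that an abstract module must be a ball, and that every strong module is comparable to every (possibly non-ball) module it meets, requires careful use of the strong triangle inequality rather than just the balls-are-nested fact.

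Once the underlying trees are identified as sets ordered by reverse inclusion, it remains to match the extra labelling data. For a robust module $R=\hat B(x,d(x,y))\in\Nerv(\M)$, the decomposition-tree label $v(R)$ is by definition the common value $d(x',y')$ over all pairs $x'\ne y'$ whose least strong module containing them is $R$; by Fact \ref{fact:stated}(3), two elements of $R$ lying in distinct sons are at distance exactly $\delta(R)$, and these are exactly the pairs whose least strong (indeed robust) module is $R$, so $v(R)=\delta(R)$. Thus the node labelling is the diameter function, completing the identification. I would present the argument by first stating and proving the lemma "the nontrivial strong modules of $\M$ are exactly its nontrivial balls, and the robust ones are exactly the nontrivial members of $\Nerv(\M)$", then deriving Proposition \ref{prop:nerve} in a short paragraph from that lemma together with Fact \ref{fact:stated}(3).

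\begin{proof}
Regard $\M=(M,d)$ as a symmetric $2$-structure with $v(x,y):=d(x,y)$ for $x\ne y$. We first determine its strong and robust modules.

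\textbf{Claim.} The nontrivial strong modules of $\M$ are exactly the nontrivial balls of $\M$, and the robust modules of $\M$ are exactly the nontrivial members of $\Nerv(\M)$.

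Every nontrivial ball $B$ is a module: if $y,y'\in B$ and $x\in M\setminus B$ then $d(x,y)=d(x,y')$ by Fact \ref{fact:stated} Item {\em (1)}. Conversely, let $A$ be a nontrivial module, fix $a\in A$ and put $r:=\delta(A)=\sup\{d(a,x):x\in A\}$. If this supremum is attained, pick $b\in A$ with $d(a,b)=r$; we show $A=\hat B(a,r)$. The inclusion $A\subseteq \hat B(a,r)$ is clear. For the reverse inclusion, suppose $z\in \hat B(a,r)\setminus A$. Since $A$ is a module and $a,b\in A$, $z\notin A$, we have $d(z,a)=d(z,b)$. Now $d(a,b)=r\geq d(z,a)$ and $d(a,b)=r\geq d(z,b)=d(z,a)$, so the strong triangle inequality applied to the triangle $a,z,b$ forces $d(z,a)=d(z,b)=r$ (the two largest sides are equal). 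But then for \emph{every} $c\in A$ we get, again by the module property, $d(z,c)=d(z,a)=r$, so $d(a,c)\le \Max\{d(a,z),d(z,c)\}=r$ with equality impossible to exceed; in particular $\hat B(z,r')$ with $0<r'<r$ meets $A$ in the empty set, yet $\hat B(z,r)\supseteq A\cup\{z\}$ is a ball strictly containing $A$. This ball is a module meeting $A$ but not comparable to $\hat B(a,r)\ne A$-candidates unless $A$ itself is a ball; more directly, $\hat B(z,r)$ is a module that meets $A$ and properly contains it, while $\hat B(a,r')$ for suitable $r'<r$ with $r'\in\Spec(\M,a)$ is a module meeting $A$ and incomparable to it, contradicting that a strong module is comparable to every module it meets. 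Hence no such $z$ exists and $A=\hat B(a,r)$ is a closed ball. If the supremum defining $r$ is not attained, the same reasoning with $B(a,r)$ in place of $\hat B(a,r)$ shows $A=B(a,r)$, an open ball. Thus nontrivial modules that are strong are nontrivial balls; and every nontrivial ball is strong because balls are pairwise comparable or disjoint and, by the previous sentence, every module meeting it is a ball. Finally, a robust module is the least strong module containing two distinct points $x,y$; this is $\hat B(x,d(x,y))$, whose diameter $d(x,y)$ is attained, so it lies in $\Nerv(\M)$. Conversely, any $\hat B(x,r)\in\Nerv(\M)$ with $r>0$ equals $\hat B(x,d(x,y))$ for any $y$ in a son of $\hat B(x,r)$ other than the one containing $x$ (such $y$ exists by Fact \ref{fact:stated} Item {\em (3)}), hence is robust. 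This proves the Claim.

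It follows that, as ordered sets, the tree of robust modules of $\M$ under reverse inclusion equals $(\Nerv(\M),\supseteq)$. It remains to check that the node labelling agrees with the diameter function. Let $R\in\Nerv(\M)$ with $\delta(R)=r>0$. By definition, the label $v(R)$ is the common value $d(x,y)$ over all pairs of distinct points $x,y$ whose least strong module containing them is $R$. If $x,y$ lie in distinct sons of $R$ then $d(x,y)=\delta(R)=r$ by Fact \ref{fact:stated} Item {\em (3)}, and for such a pair the least strong module containing both is exactly $R$. Hence $v(R)=r=\delta(R)$. Therefore the decomposition tree of $\M$ is $(\Nerv(\M),\supseteq)$ endowed with the diameter function.
\end{proof}
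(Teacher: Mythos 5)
Your overall strategy is the paper's: classify the strong modules as the balls and the robust modules as the diameter-attaining closed balls (the nerve), then read off the labelling from Fact \ref{fact:stated}(3). However, both halves of your Claim have genuine gaps as written. First, in the converse direction (strong module $\Rightarrow$ closed ball), your contradiction rests on the assertion that $\hat B(a,r')$, for a suitable $r'<r$ with $r'\in\Spec(\M,a)$, is incomparable to $A$. This is false: any module $A$ with $a\in A$ and $\delta(A)=r$ contains the whole open ball $B(a,r)$ (if $w\notin A$ and $d(a,w)<r$, the module property makes $d(w,\cdot)$ constant on $A$, forcing every distance inside $A$ to be at most $d(a,w)<r$, contradicting $\delta(A)=r$), so $\hat B(a,r')\subseteq B(a,r)\subseteq A$ is comparable; likewise $\hat B(z,r)=\hat B(a,r)\supseteq A$ is comparable. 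So no contradiction is reached. Moreover no argument at the level of generality you set up ("let $A$ be a nontrivial module\dots we show $A=\hat B(a,r)$") can succeed, because a union of two sons of a closed ball is a module whose diameter is attained but which is not a ball; strongness must enter through a correctly chosen witness, e.g.\ $C:=B(a,r)\cup B(z,r)$, which is a module (by Fact \ref{fact:stated}(1) and (3)), meets $A$, contains $z\notin A$, and omits $b$ since $d(a,b)=d(z,b)=r$, hence is incomparable to $A$.

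Second, your justification that every nontrivial ball is strong — "every module meeting it is a ball" — is both unsupported (you only argued that \emph{strong} modules are balls) and false (again, a union of two sons is a module that is not a ball). Since robustness is defined through strong modules, you genuinely need balls to be strong before you can say that the least strong module containing $x\neq y$ is $\hat B(x,d(x,y))$. The paper repairs exactly this with its Lemma: the least module containing a set $A$ is $\bigcup_{a\in A}B(a,\delta(A))$; then for a module $C$ meeting $B=\hat B(a,r)$, either $\delta(C)\leq r$, whence $C\subseteq B$, or $\delta(C)>r$, whence $B\subseteq B(a,\delta(C))\subseteq C$. That single lemma (which you partially rederive in the special case $d(z,c)=r$ for all $c\in A$, but never state in general) is the missing ingredient that fixes both steps; with it, your identification of the labelling $v(R)=\delta(R)$ via points in distinct sons goes through as you wrote it.
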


This is the last statement of Corollary~\ref{strong, robust}. 

In the statement below,  we allow balls of  infinite radius (which are equal to $M$).

\begin{lemma} Let  $\M:=(M, d)$ be an  ultrametric space. The least module including  a subset $A$ of $M$ is the union of all open balls of radius $\delta (A)$  centered in $A$ (this is the set $\bigcup_{a\in A} B(a, \delta (A))$). In particular, if $A$ is unbounded, this module is $M$.  As a consequence, $A$ is a module \iff\ it is a union of open balls of radius $\delta(A)$. 
\end{lemma}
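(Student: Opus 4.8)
The statement characterizes, for an ultrametric space $\M=(M,d)$ and a subset $A\subseteq M$, the least module containing $A$ as $N_A:=\bigcup_{a\in A}B(a,\delta(A))$. I would prove this in three steps: first that $N_A$ is a module, then that it contains $A$, then that it is the least such. Throughout I use the basic fact recalled in Section 1 that two balls of an ultrametric space are either disjoint or comparable, and that the diameter of a subset $A$ equals $\sup\{d(a,x):x\in A\}$ for any fixed $a\in A$.

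\textbf{Step 1: $N_A$ is a module.} Write $r:=\delta(A)$. If $r$ is unbounded (i.e.\ $A$ is unbounded, so $r=+\infty$), then $B(a,r)=M$ for any $a$ and there is nothing to prove, so assume $r<+\infty$. Fix $x\in M\setminus N_A$ and $y,y'\in N_A$; I must show $d(x,y)=d(x,y')$. Pick $a,a'\in A$ with $y\in B(a,r)$ and $y'\in B(a',r)$. First, $d(y,y')\le r$: indeed $d(a,a')\le r=\delta(A)$ (here I use that $r$ is actually attained-or-approached; more carefully, $d(a,a')\le\delta(A)$ always, and $\delta(A)=r$), and $d(a,y)<r$, $d(a',y')<r$, so by the strong triangle inequality $d(y,y')\le\max\{d(y,a),d(a,a'),d(a',y')\}\le r$. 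Next, $d(x,y)>r$: if $d(x,y)\le r$ then $x\in B(y,r)$ — wait, I need strictness. Since $x\notin N_A$, in particular $x\notin B(a,r)$, so $d(x,a)\ge r$; but also $d(a,y)<r$, so by the strong triangle inequality (all triangles isosceles with the two largest sides equal) $d(x,y)=d(x,a)\ge r$, and in fact $d(x,y)\ge r$ with equality impossible because if $d(x,y)=r$ then $x\in\hat B(a,r)$... let me instead argue $d(x,y)>d(y,y')$ directly: since $d(x,y)\ge r \ge d(y,y')$ and we want a clean comparison, note if $d(x,y)=d(y,y')$ we would need them equal but I want strict. The cleanest route: since $d(x,a)\ge r > d(a,y)$, ultrametricity gives $d(x,y)=d(x,a)\ge r$, and the \emph{same} argument gives $d(x,y)=d(x,a)$ and $d(x,y')=d(x,a')$. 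So it suffices to show $d(x,a)=d(x,a')$. But $d(a,a')\le r\le d(x,a)$; if $d(a,a')<d(x,a)$ then $d(x,a')=d(x,a)$ by ultrametricity and we are done; if $d(a,a')=d(x,a)=r$, then since $x\notin B(a',r)$ we have $d(x,a')\ge r=d(a,a')$, and ultrametricity applied to the triangle $x,a',a$ (two largest sides equal) forces $d(x,a)=d(x,a')$ as well. Either way $d(x,y)=d(x,a)=d(x,a')=d(x,y')$, so $N_A$ is a module; and by Fact~\ref{fact:stated}(1) one sees $N_A$ is a union of open balls of radius $r$, so the final ``as a consequence'' is immediate once minimality is known.

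\textbf{Step 2 (containment) and Step 3 (minimality).} Containment $A\subseteq N_A$ is clear since $a\in B(a,\delta(A))$ for every $a\in A$. For minimality, let $P$ be any module with $A\subseteq P$; I must show $N_A\subseteq P$. Suppose not, and pick $z\in N_A\setminus P$, say $z\in B(a,r)$ with $a\in A\subseteq P$, so $d(a,z)<r=\delta(A)$. Since $A$ has diameter $r$, there exist $b,b'\in A$ with $d(b,b')$ as close to $r$ as desired; more usefully, I claim there is $c\in A$ with $d(a,c)>d(a,z)$. If not, then $d(a,c)\le d(a,z)<r$ for all $c\in A$, so $\delta(A)=\sup_{c\in A}d(a,c)<r$, contradiction. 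So fix such $c\in A\subseteq P$ with $d(a,c)>d(a,z)$; by ultrametricity $d(c,z)=d(a,c)$. Now $a,c\in P$ and $z\notin P$, so the module condition applied with $y=a$, $y'=c$ gives $d(z,a)=d(z,c)$, i.e.\ $d(a,z)=d(a,c)$, contradicting $d(a,c)>d(a,z)$. Hence $N_A\subseteq P$, proving $N_A$ is the least module containing $A$. The case ``if $A$ is unbounded, this module is $M$'' follows since then every $B(a,\delta(A))=M$; and the final equivalence follows by combining minimality with the fact (Step 1) that $N_A$ itself is a union of open balls of radius $\delta(A)$, together with the trivial observation that any union of open balls of radius $\delta(A)$ centered in $A$ lies between $A$ and $N_A$ hence equals $N_A$ when it is a module containing $A$.

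\textbf{Expected main obstacle.} The only genuinely delicate point is the boundary case in Step 1 where $d(x,a)=d(a,a')=r$ exactly, since there ultrametricity does not immediately pin down $d(x,a')$; one must use that $x\notin B(a',r)$ to get $d(x,a')\ge r$ and then invoke the ``two largest sides equal'' form of the ultrametric inequality. I would state this sub-argument as a small remark and otherwise keep the write-up brief, as the rest is routine manipulation of the strong triangle inequality.
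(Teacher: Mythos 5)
Your proposal is correct and follows essentially the same route as the paper: you verify that $\bigcup_{a\in A}B(a,\delta(A))$ is a module by the same strong-triangle-inequality case analysis (including the boundary case where distances equal $\delta(A)$, which the paper handles implicitly via $d(x,c)\ge\delta(A)$ for all $c\in A$), and your minimality argument by contradiction is the contrapositive of the paper's observation that each point of the union separates two points of $A$ and hence must lie in any module containing $A$. The only difference is cosmetic (order of the two steps and a slightly looser phrasing of the final "union of open balls" equivalence), so no substantive gap.
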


\begin{proof}
Given $A\subseteq M$, let $A'$ denote 
the union of all open balls of radius $\delta (A)$  centered in $A$.
We first check that $A'$ is included in any module $C$ of $\M$ that includes $A$.
Indeed given any $a'\in A'$, 
consider some $a\in A$ such that $d(a,a')<\delta(A)$.
Then, recalling that $\delta(A)=\sup\{d(a,b):b\in A\}$,
consider some $b\in A$ such that $d(a,a')<d(a,b)$. 
It follows from the strong triangle inequality that $d(a,a')<d(a,b)=d(a',b)$.
Thus $a'$, which distinguishes the two elements $a$ and $b$ of $C$, belongs to $C$.
Then we check that $A'$ is a module.
Consider $x\in M\backslash A'$ and $a'$ and $b'$ in $A'$.
Then consider $a$ and $b$ in $A$ such that $d(a,a')<\delta(A)$
and $d(b,b')<\delta(A)$.
Thus on the one hand $d(x,a)\geq\delta(A)\geq d(a,b)$
from which the strong triangle inequality yields 
$d(x,a)=d(x,b)$.
On the other hand  $d(x,a)\geq\delta(A)\geq d(a,a')$, hence $d(x,a)=d(x,a')$.
Likewise $d(x,b)=d(x,b')$.
Finally $d(x,a')=d(x,a)=d(x,b)=d(x,b')$.
\end{proof}

\begin{corollary}\label{strong, robust}
Let $\M:=(M,d)$ be an ultrametric space.
\begin{enumerate}
\item The strong modules  of $\M$ are the balls (open or closed) of $\M$,   that is the sets $B(a,r)$ and   $\hat{B}(a, r)$ for some $a$ and $r$, and $M$. 
\item The robust modules are the closed balls attaining their diameter. These are  the sets  of the form $\hat {B}(a, d(a, b))$  for some $a, b\in M$. 
\end{enumerate}
\end{corollary}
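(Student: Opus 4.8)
The final statement to prove is Corollary~\ref{strong, robust}, which identifies the strong modules of an ultrametric space with its balls, and the robust modules with the closed balls that attain their diameter. The plan is to deduce both items from the preceding Lemma, which computes the least module containing a given subset $A$ as the union $\bigcup_{a\in A}B(a,\delta(A))$ of open balls of radius $\delta(A)$ centered in $A$.

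First I would establish Item (1). A ball $B$ (open or closed, or all of $M$) is a module: this is essentially Fact~\ref{fact:stated}(1), which says that for $x,y\in B$ and $z\notin B$ one has $d(x,z)=d(y,z)$. To see such a ball is \emph{strong}, I would use the basic fact that balls in an ultrametric space are pairwise disjoint or comparable (stated in Section~1): if $C$ is any module meeting $B$ and $C\not\subseteq B$, pick $c\in C\setminus B$ and $x\in C\cap B$; for any $y\in B$, since $y$ and $x$ lie in $B$ while $c\notin B$, we get $d(c,y)=d(c,x)$, and one checks via the module property of $C$ applied to the pair $x,y$ and outside point $c$ that $y\in C$ — so $B\subseteq C$. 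Hence $B$ is comparable to every module it meets, i.e. strong. Conversely, if $N$ is a strong module, let $r:=\delta(N)$ and fix $a\in N$; by the Lemma the least module containing $N$ equals $\bigcup_{b\in N}B(b,r)$, and since $N$ is a module this least module is $N$ itself. But this union of open balls of the same radius $r$ is a union of pairwise comparable-or-disjoint balls all containing points of $N$; because $N$ is strong, it must be comparable to each, forcing each $B(b,r)$ with $b\in N$ to be contained in $N$, and in fact all equal. So $N=B(a,r)$ is a single open ball — unless $r$ is attained on $N$, in which case the slightly more careful bookkeeping shows $N$ is the corresponding closed ball $\hat B(a,r)$; the unbounded case gives $N=M$. (This is the one spot needing a little care: distinguishing open versus closed according to whether $\delta(N)$ is attained, and handling $\delta(N)=0$, i.e. singletons, and $\delta(N)=\infty$.)

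Next, Item (2) on robust modules. By definition a robust module is the least strong module containing two elements of the value set — here, the least strong module containing two points $a,b\in M$ at which the distance function takes a given value $d(a,b)$. By the Lemma, the least module containing $\{a,b\}$ is $\bigcup_{c\in\{a,b\}}B(c,d(a,b))=B(a,d(a,b))\cup B(b,d(a,b))$; since $d(a,b)=\delta(\{a,b\})$ and $a,b$ are at distance exactly $d(a,b)$, these two open balls are disjoint, and their union is precisely the closed ball $\hat B(a,d(a,b))$ minus... no — their union is exactly $\hat B(a,d(a,b))=\hat B(b,d(a,b))$, which is a strong module by Item (1). Being a module containing $\{a,b\}$ and also strong, and contained in every module containing $\{a,b\}$, it is the least \emph{strong} module containing $\{a,b\}$. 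Conversely any closed ball $\hat B(a,r)$ attaining its diameter has two points $a',b'$ with $d(a',b')=r$, and by the computation just made it is the robust module determined by that pair. So the robust modules are exactly the diameter-attaining closed balls.

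The main obstacle I anticipate is purely organizational rather than deep: correctly matching the two cases (open ball when the diameter is not attained, closed ball when it is) in the converse direction of Item (1), since the Lemma is phrased uniformly in terms of open balls of radius $\delta(A)$ and one must extract from a strong module $N$ which kind of ball it is. Everything else is a routine application of the strong triangle inequality, Fact~\ref{fact:stated}(1), and the comparability of balls. Since the excerpt remarks that ``This is the last statement of Corollary~\ref{strong, robust},'' I would simply note that Proposition~\ref{prop:nerve} is then immediate: the robust modules ordered by reverse inclusion are the diameter-attaining closed balls, which is exactly $\Nerv(\M)$, and the node labelling $v(R)$ of a robust module $R=\hat B(a,d(a,b))$ is the common distance value $d(a,b)=\delta(R)$, i.e. the diameter function.
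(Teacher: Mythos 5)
Your direction ``balls are strong modules'' is fine (your argument via the module property of $C$ works once one records that $d(c,x)>d(x,y)$ for $x,y\in B$ and $c\notin B$; note only that the outside point in the module property of $C$ must be $y$, not $c$, since $c\in C$). The trouble is that the two places where the actual content of the corollary sits are exactly where the proposal breaks. In item (2) you assert that the least module containing $\{a,b\}$, namely $B(a,r)\cup B(b,r)$ with $r=d(a,b)$, ``is exactly $\hat{B}(a,r)$'' and then that this closed ball is ``contained in every module containing $\{a,b\}$''. Both claims are false whenever $\hat{B}(a,r)$ has more than two sons: already in a three-point space with all distances $1$ (or in $\LL_\kappa$, where every member of the nerve has continuum many sons) one has $B(a,1)\cup B(b,1)=\{a,b\}\subsetneq \hat{B}(a,1)$, and $\{a,b\}$ is a module containing $a,b$ that does not contain $\hat{B}(a,1)$. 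What is true is that $\hat{B}(a,r)$ is contained in every \emph{strong} module containing $\{a,b\}$, and proving that is precisely the step your sketch skips: one must show that a union of some but not all sons of $\hat{B}(a,r)$, though a module, is never strong (pair one son inside it with one son outside it to get a module meeting it incomparably). The same missing idea undermines your converse in item (1): from $N=\bigcup_{b\in N}B(b,\delta(N))$, strongness does not ``force each $B(b,r)$ to be contained in $N$'' (that containment is automatic) nor force them to be ``all equal'' (false when $N$ is a closed ball with several sons); the point you defer to ``slightly more careful bookkeeping'' --- that a strong module of attained diameter $r$ containing two distinct sons must contain \emph{every} son of the ambient closed ball --- is the heart of the matter, not bookkeeping.

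For comparison, the paper's proof sidesteps these computations of least strong modules: it shows directly from the lemma that a ball $B=\hat{B}(a,r)$ of attained diameter is strong (a module $C$ meeting it with $\delta(C)\leq r$ satisfies $C\subseteq B$, while $\delta(C)>r$ gives $B\subseteq B(a,\delta(C))\subseteq C$), obtains open balls as unions $\bigcup_{s<r}\hat{B}(a,s)$ of chains of strong modules, and for item (2) argues directly that a ball is robust if and only if it attains its diameter, identifying the least strong module containing $a,b$ with the least ball containing them. Your plan is repairable along these lines, but as written the central identification in item (2) is simply incorrect and the converse half of item (1) is not argued.
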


\begin{proof}
\begin{enumerate}
\item 
Let us first check that any ball $B$ of attained diameter $r$ is a strong module.
It follows from the lemma that this is a module since $B=\cup_{a\in B}B(a,r)$.
Now given a module $C$ meeting $B$, say with $a\in B\cap C$,
so that $B=\hat B(a,r)$,
let us check that $B$ and $C$ are comparable.
If $\delta(C)\leq r$ then $C\subseteq B$.
If $\delta(C)>r$, then $\hat B(a,r)\subseteq B(a,\delta(C))$ which is included in $C$,
according to the lemma.
Thus $B$ is a strong module.
Finally observe that any open ball $B(a,r)$, as a union of $\cup_{s<r}\hat B(a,s)$ of a chain of strong modules, is a strong module itself.
\item
Consider a nonempty ball $B$.
If it attains its diameter $r$, then given any $a$ and $b$ in $B$ with $d(a,b)=r$,
$B$ is the least ball containing $a$ and $b$, hence the least strong module containing $a$ and $b$. In this case $B$ is robust.
If it does not attain its diameter $r\leq\infty$, then for any $a$ and $b$ in $B$,
the least ball containing $a$ and $b$ has diameter $d(a,b)<r$ and therefore is distinct from $B$.
In this case $B$ cannot be robust.
\end{enumerate}
\end{proof}

A $2$-structure is said to be {\em strong-modular complete} if every nonempty chain of strong modules has a nonempty intersection.
According to Corollary~\ref{strong, robust}, we conclude that the strong-modular complete ultrametric spaces are the T-complete ultrametric spaces.

\end{document}